\newtheorem{theorem}{Theorem}[section]
\newtheorem{proposition}{Proposition}[section]
\theoremstyle{definition}
\theoremstyle{remark}
\newtheorem{remark}{Remark}[section]
\DeclareMathOperator{\E}{\mathds{E}}
\DeclareMathOperator{\Var}{Var}
\DeclareMathOperator{\N}{\mathbb{N}}
\DeclareMathOperator{\R}{\mathbb{R}}
\DeclareMathOperator{\1}{\mathds{1}}
\def\sT{\mathscr{T}}
\numberwithin{equation}{section}
\newcommand{\lnorm}[3]{\left\Vert{#3}\right\Vert_{\ell^{#1}(\N)^{\otimes{#2}}}}
\newcommand{\ellnorm}[3]{\left\Vert{#3}\right\Vert_{\ell^{#1}(\N)^{\otimes{#2}}}}
\newcommand{\lnormb}[3]{\big\Vert{#3}\big\Vert_{\ell^{#1}(\N)^{\otimes{#2}}}}
\title[Discrete Malliavin-Stein method and applications]{Discrete Malliavin-Stein method: Berry-Esseen bounds for random graphs and percolation}
\author[K. Krokowski]{Kai Krokowski}
\address{Kai Krokowski: Faculty of Mathematics, NA 3/28, Ruhr University Bochum, Germany}
\email{kai.krokowski@rub.de}
\author[A. Reichenbachs]{Anselm Reichenbachs}
\address{Anselm Reichenbachs: Faculty of Mathematics, NA 3/28, Ruhr University Bochum, Germany}
\email{anselm.reichenbachs@rub.de}
\author[C. Th\"ale]{Christoph Thäle}
\address{Christoph Th\"ale: Faculty of Mathematics, NA 3/68, Ruhr University Bochum, Germany}
\email{christoph.thaele@rub.de}
\subjclass[2010]{05C80, 60F05, 60H07, 82B43}
\keywords{Berry-Esseen bound, central limit theorem, Malliavin-Stein method, Mehler's formula, percolation, Rademacher functional, random graph, sub-graph count, tree.}
\date{}
\begin{document}

\begin{abstract}
A new Berry-Esseen bound for non-linear functionals of non-symmetric and non-homogeneous infinite Rade\-macher sequences is established. It is based on a discrete version of the Malliavin-Stein method and an analysis of the discrete Ornstein-Uhlenbeck semigroup. The result is applied to sub-graph counts and to the number of vertices having a prescribed degree in the Erd\H{o}s-Renyi random graph. A further application deals with a percolation problem on trees.
\end{abstract}

\maketitle

\section{Introduction}

The Malliavin-Stein method has become a versatile device for proving quantitative limit theorems. It combines the Malliavin calculus of variations with Stein's method. The results obtained this way typically fall into two categories. The first category consists of limit theorems for non-linear functionals defined on the Wiener space with notable applications to Gaussian random processes, especially the fractional Brownian motion \cite{MarPec,NouPecPTRF09,NouPecBook}, random matrices \cite{NouPecMatrices} and random polynomials \cite{AzaisLeonCLTRandTrigPol}. The other brand comprises limit theorems for functionals of Poisson random measures and their applications to stochastic geometry \cite{EicTha,LacPec1,LastPeccatiSchulte,LPST,PecTha,SchulteThaeleScalingLimits}, $U$-statistics \cite{DecreusefondSchulteThaele,EicTha,LacPec2,PecTha,ReiSch}, non-linear statistics of spherical Poisson fields \cite{DurastaniEtAl1} and the theory of L\'evy processes \cite{EicTha,LastPeccatiSchulte,PecSolTaqUtz}.

On the other hand, the Malliavin-Stein method has left only few traces in that part of probability theory in which discrete random structures are investigated. One exception is the paper \cite{ReiPec}, where Stein's method for normal approximation has been combined with tools from discrete stochastic analysis for symmetric Rademacher sequences to deduce quantitative central limit theorems with respect to probability distances based on smooth test functions. Here, by a symmetric Rademacher sequence we understand an infinite sequence of independent and identically distributed random variables taking the values $\pm 1$ with probability $1/2$ each. This approach has been extended in \cite{KroReiThae} to deduce Berry-Esseen bounds, that is, estimates for the Kolmogorov distance in related central limit theorems. The applications considered in \cite{KroReiThae,ReiPec} concern the number of two-runs, a quantitative version of a combinatorial central limit theorem as well as traces of powers of random Bernoulli matrices. While the previously mentioned papers were concerned with the symmetric case, we work with general non-linear functionals of non-symmetric and even non-homogeneous Rademacher sequences in order to bring a rich class of examples, that were not accessible before, within the reach of the Malliavin-Stein method. Moreover, we emphasize that some of the examples we present below are not within the reach of any of the traditional approaches using Stein's method.

One of the main tools of the Malliavin-Stein method on the Wiener or the Poisson space is the so-called multiplication formula for multiple stochastic integrals, cf.\ \cite{PecTaq} for a general overview. The main difficulty in the discrete set-up is that no such multiplication formula for discrete multiple stochastic integrals based on non-symmetric or non-homogeneous Rademacher sequences is available. Consequently, a new type of abstract Berry-Esseen bound needs to be developed, which is getting along without this technical device. Such a result, namely Theorem \ref{Folgetheorem} below, is one of our main contributions. It can be interpreted as a kind of `second-order Poincar\'e inequality' and is the discrete analogue of corresponding results on the Wiener or the Poisson space, cf.\ \cite{LastPeccatiSchulte,NourdinPeccatiReinertSecondOrderPoincare}. It relies on a generalization of the Malliavin-Stein bound established in \cite{KroReiThae} and on an analysis of the discrete Ornstein-Uhlenbeck semigroup. To make this approach work, we also have to develop further some facets of the discrete Malliavin calculus of variations. In particular, we present a generalization of the integration-by-parts formula, which is one of our crucial devices. The Berry-Esseen bound we obtain this way is particularly well suited for the study of discrete random structures. This is due to the fact that the chaotic decomposition of the functional at hand does not have to be specified. Instead, the impact of local perturbations on the functional measured by means of a certain difference operator (discrete Malliavin derivative) has to be evaluated. A sufficient condition for asymptotic normality is that moments of first- and second-order discrete Malliavin derivatives of the functional are sufficiently small.

\medskip

To highlight the versatility of our general limit theorem we now present a couple of concrete applications. The first one deals with the \textit{triangle counting statistic} associated with the \textit{Erd\H{o}s-Renyi random graph}. Introduced in \cite{ErdosRenyRandomGraphs}, the model has since then become one of the most popular models in discrete probability, cf.\ \cite{JanLucRuc} for an exhaustive list of references. Informally, the random graph $G(n,p)$ is a graph on $n\in\N$ vertices in which each edge between two vertices is included with probability $p\in[0,1]$, independently of the other edges (for a detailed construction see Section \ref{sec:RandomGraphs} below and see Figure \ref{fig:ER} for simulations). In what follows we allow $p$ also to depend on $n$, but for practical reasons we suppress this in our notation. The random variable in the focus of our attention is the number $T=T(n,p)$ of triangles in $G(n,p)$, i.e., the number of sub-graphs of $G(n,p)$ that are isomorphic to the complete graph on $3$ vertices. A comprehensive central limit theorem for the normalized random variable $F:=(T-\E[T])/\sqrt{\Var[T]}$ has been derived in \cite{Ruc} by the method of moments. In particular, it provides a necessary and sufficient condition on $n$ and $p$, which ensures asymptotic Gaussianity for $F$. Namely, as $n\to\infty$, one has that
\begin{align*}
F\overset{d}{\longrightarrow}N\quad\text{if and only if}\quad np\to\infty\text{ and }n^2(1-p)\to\infty\,,
\end{align*}
where $N\sim\mathcal{N}(0,1)$ is a standard Gaussian random variable and $\overset{d}{\longrightarrow}$ indicates convergence in distribution. Using Stein's method for normal approximation, a rate of convergence in this central limit theorem measured by some sort of bounded Wasserstein distance has been established in \cite{BarKarRuc}. If $p\in(0,1)$ is fixed,
$$d_1(F,N):=\sup_{h\in\mathcal{H}}{\big|\E[h(F)]-\E[h(N)]\big|\over\|h\|_\infty+\|h'\|_\infty}=\mathcal{O}(n^{-1})\,,$$
where $\mathcal{H}$ is the class of bounded functions $h:\R\to\R$ with bounded first derivative and where $\|\,\cdot\,\|_\infty$ denotes the supremum norm. For the case that $p=\theta n^{-\alpha}$ with $\alpha\in(0,1)$ and $\theta\in(0,n^{\alpha})$ such that  $\theta\asymp 1$ the result in \cite{BarKarRuc} delivers the bound
\begin{equation*}
d_1(F,N)=\begin{cases}\mathcal{O}\big(n^{-1\,+\,\alpha/2}\big)&\text{if }0<\alpha\leq\frac{1}{2}\\ \mathcal{O}\big(n^{-3(1-\alpha)/2}\big)&\text{if }\frac{1}{2}<\alpha<1\,.\end{cases}
\end{equation*}
We use the following standard notation for comparing the order of magnitude of two real sequences: We write $a_n\asymp b_n$ for two real sequences $(a_n)_{n\in\N}$ and $(b_n)_{n\in\N}$ whenever \[c\leq\liminf\limits_{n\to\infty}\Big|\frac{a_n}{b_n}\Big|\leq\limsup\limits_{n\to\infty}\Big|\frac{a_n}{b_n}\Big|\leq C\]
for two constants $0<c\leq C<\infty$. We also write $a_n=\mathcal{O}(b_n)$ for two non-negative sequences $(a_n)_{n\in\N}$ and $(b_n)_{n\in\N}$ if there is a constant $c\in(0,\infty)$ such that $a_n\leq c\,b_n$ for sufficiently large $n$. Applying a standard smoothing argument, one can show that the more prominent and more natural Kolmogorov distance $$d_K(F,N):=\sup_{x\in\R}\big|P(F\leq x)-P(N\leq x)\big|$$
between $F$ and $N$ is bounded by a constant multiple of the \textit{square-root} of $d_1(F,N)$, cf.\ \cite[Proposition 2.4]{LaplaceDistribution}. However, this typically leads to a suboptimal rate of convergence for the Kolmogorov distance $d_K(F,N)$. For example, in the special case of a fixed $p\in(0,1)$ one expects that also $d_K(F,N)$ is of order $n^{-1}$. Our main contribution in this context is the following Berry-Esseen bound, which in particular confirms that this is in fact true. We emphasize that we are not aware of any other technique, which could be used to provide bounds on the Kolmogorov distance of this quality if $p$ is of the form $\theta n^{-\alpha}$ with $\alpha\in(0,1)$ and $\theta\in(0,n^{\alpha})$ such that  $\theta\asymp 1$. In what follows, we treat both set-ups simultaneously and contribute thereby to a long standing problem in this area. 

\begin{theorem}\label{thm:ERGraph}
Denote by $N\sim\mathcal{N}(0,1)$ a standard Gaussian random variable. Let $p=\theta\,n^{-\alpha}$ with $\alpha\in[0,1)$ and $\theta=\theta_n\in(0,n^{\alpha})$ such that  $\theta\asymp 1$. Then
%$$d_K(F,N)=\mathcal{O}(n^{-1})$$
%if $p\in(0,1)$ is fixed and
\begin{equation*}
d_K(F,N)=\begin{cases}\mathcal{O}(n^{-1\,+\,\alpha})&\text{if }0\leq\alpha\leq\frac 12\\ \mathcal{O}(n^{-3/4\,+\,\alpha/2})&\text{if }\frac 12<\alpha\leq\frac 23\\ \mathcal{O}(n^{-5(1-\alpha)/4})&\text{if }\frac 23<\alpha<1\,.\end{cases}
\end{equation*}
In particular, if $p$ is constant, i.e., if $\alpha=0$, $$d_K(F,N)=\mathcal{O}(n^{-1})\,.$$
\end{theorem}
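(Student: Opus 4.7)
The plan is to realize the triangle count $T$ as a non-linear functional of a non-symmetric Rademacher sequence and then apply the abstract Berry-Esseen bound provided by Theorem \ref{Folgetheorem}. Indexing the $\binom{n}{2}$ potential edges of the complete graph on $\{1,\dots,n\}$ by $E$, let $(X_e)_{e\in E}$ be independent Rademacher variables with $P(X_e=1)=p$ and $P(X_e=-1)=1-p$, and set $Y_e:=\1\{X_e=1\}$. Then
$$T=\sum_{\{i,j,k\}} Y_{ij}Y_{ik}Y_{jk},$$
the sum running over all $\binom{n}{3}$ three-element subsets of $\{1,\dots,n\}$. Consequently, $F=(T-\E T)/\sqrt{\Var T}$ is a square-integrable functional of $(X_e)_{e\in E}$ to which Theorem \ref{Folgetheorem} applies.

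The next step is to compute the first- and second-order discrete Malliavin derivatives of $F$. Up to deterministic normalizations depending on $p$, $D_eT$ equals the increment of $T$ obtained by toggling $X_e$, which simplifies to a sum over the $n-2$ potential triangles through the edge $e$ of the product $Y_fY_g$ of the two remaining edge indicators. The second derivative $D^2_{e,f}T$ vanishes whenever $e$ and $f$ do not share a common vertex; if they share a vertex, $D^2_{e,f}T$ equals (up to normalization) $Y_g$, where $g$ is the unique edge completing the triangle on $e$ and $f$. The essential feature of this representation is its locality: for each of the moments appearing in Theorem \ref{Folgetheorem} only a combinatorially explicit, finite set of edge configurations contributes.

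Inserting these derivatives into the bound of Theorem \ref{Folgetheorem} reduces the task to estimating sums of the form
$$\frac{1}{(\Var T)^{3/2}}\sum_e\E|D_eT|^3\quad\text{and}\quad\frac{1}{(\Var T)^{2}}\sum_{e_1,e_2,e_3,e_4}\bigl(\E[(D^2_{e_1,e_3}T)^2(D^2_{e_2,e_3}T)^2]\,\E[(D^2_{e_1,e_4}T)^2(D^2_{e_2,e_4}T)^2]\bigr)^{1/2},$$
together with analogous mixed terms containing one first and one second derivative. Each such sum decomposes according to the isomorphism type of the (multi-)graph spanned by the participating edges, and a configuration using $v$ distinct vertices and $m$ distinct edges contributes an order of magnitude $n^v p^m$. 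Combined with the classical asymptotics
$$\Var T\asymp n^3p^3(1-p^3)+n^4p^5(1-p),$$
in which the second summand dominates for $\alpha<1/2$ and the first for $\alpha>1/2$, one collects the contributions of all combinatorial types and optimizes.

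The resulting bound is a finite maximum of explicit powers of $n$ and $p=\theta n^{-\alpha}$; as $\alpha$ sweeps through $[0,1)$, the dominant combinatorial type switches, producing the three regimes and the thresholds $\alpha=1/2$ and $\alpha=2/3$ announced in the theorem. The hard part will be the careful bookkeeping of these combinatorial contributions and, in particular, the verification that every term produced by Theorem \ref{Folgetheorem} --- including the correction terms specific to the non-symmetric setting, which are invisible when $p=1/2$ but genuinely present otherwise --- is controlled by the claimed rate. The case of constant $p$ ($\alpha=0$) then drops out of this bookkeeping as a uniform bound of order $n^{-1}$.
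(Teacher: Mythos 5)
Your general strategy is the same as the paper's: realize $T$ as a Rademacher functional, compute the first and second discrete gradients, observe that $\sqrt{\Var T}/\sqrt{pq}\cdot D_eF$ has a $\mathrm{Bin}(n-2,p^2)$ law and that $\sqrt{\Var T}/(pq)\cdot D^2_{e,f}F$ is a $\mathrm{Ber}(p)$ indicator precisely when $e$ and $f$ share a vertex, then insert these into Theorem~\ref{Folgetheorem} and estimate each of the seven terms by combinatorial counting. So far so good, and most of the terms ($A_1,A_2,A_3,A_5,A_6,A_7$ in the paper's notation) are indeed handled by exactly the bookkeeping you describe (though the four-index sum you display does not actually appear in Theorem~\ref{Folgetheorem} --- the first two terms there involve three indices, one of which carries the second-order gradient and two of which carry first-order gradients).

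The genuine gap is in the fourth term of Theorem~\ref{Folgetheorem},
\[
A_4 = (\E[|F|^r])^{1/r}\sum_{k}\frac{1}{\sqrt{pq}}(\E[|D_kF|^{2s}])^{1/s}(\E[|D_kF|^t])^{1/t},
\]
which your proposal never isolates. This term is not a pure sum over edge configurations: it carries the prefactor $(\E[|F|^r])^{1/r}$, a moment of the normalized triangle count itself, and so it does not decompose ``according to the isomorphism type of the graph spanned by the participating edges'' in the way your plan assumes. Controlling it requires the moment asymptotics $\E[F^{2m}]\to(2m)!/(m!\,2^m)$ from Ruci\'nski's CLT, and --- crucially --- it requires choosing the H\"older exponents $(r,s,t)$ depending on $\alpha$. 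With the default choice $r=2$, $s=t=4$, one obtains $A_4=\mathcal{O}(n^{-2+5\alpha/2})$ for $\alpha>\tfrac12$, which already exceeds the claimed rate $n^{-3/4+\alpha/2}$ (and, beyond $\alpha=\tfrac23$, the rate $n^{-5(1-\alpha)/4}$) as soon as $\alpha>5/8$; as $\alpha\to1$ the $r=2$ bound even degenerates to $\mathcal{O}(1)$. The paper resolves this by taking $r$ to be an even integer of size at least $4(2\alpha-1)/(1-\alpha)$, which grows without bound as $\alpha\to1$, and this is exactly the ``additional flexibility'' the paper flags as the reason the $r,s,t$ in Theorem~\ref{Folgetheorem} were left free. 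Your plan also does not derive the thresholds $\alpha=\tfrac12$ (where the variance asymptotics change, as you note) and $\alpha=\tfrac23$ (where the dominant term switches from $A_1$ to $A_6$), nor does it justify the independence of $D_\ell D_kF$ and $D_\ell D_jF$ for $k\ne j$, which is what makes the three-index sum in $A_1$ factor cleanly. In short: the combinatorial skeleton is right and matches the paper, but the proof as outlined fails for $\alpha>5/8$ because the $A_4$ term is not addressed, and filling that hole is the genuine technical content of the argument.
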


%\begin{remark}
%Note that the case $\alpha=0$ in Theorem \ref{thm:ERGraph} above covers the case of constant $p$.
%\end{remark}

\begin{figure}[t]
\includegraphics[trim = 30mm 30mm 20mm 20mm,clip,width=0.45\columnwidth]{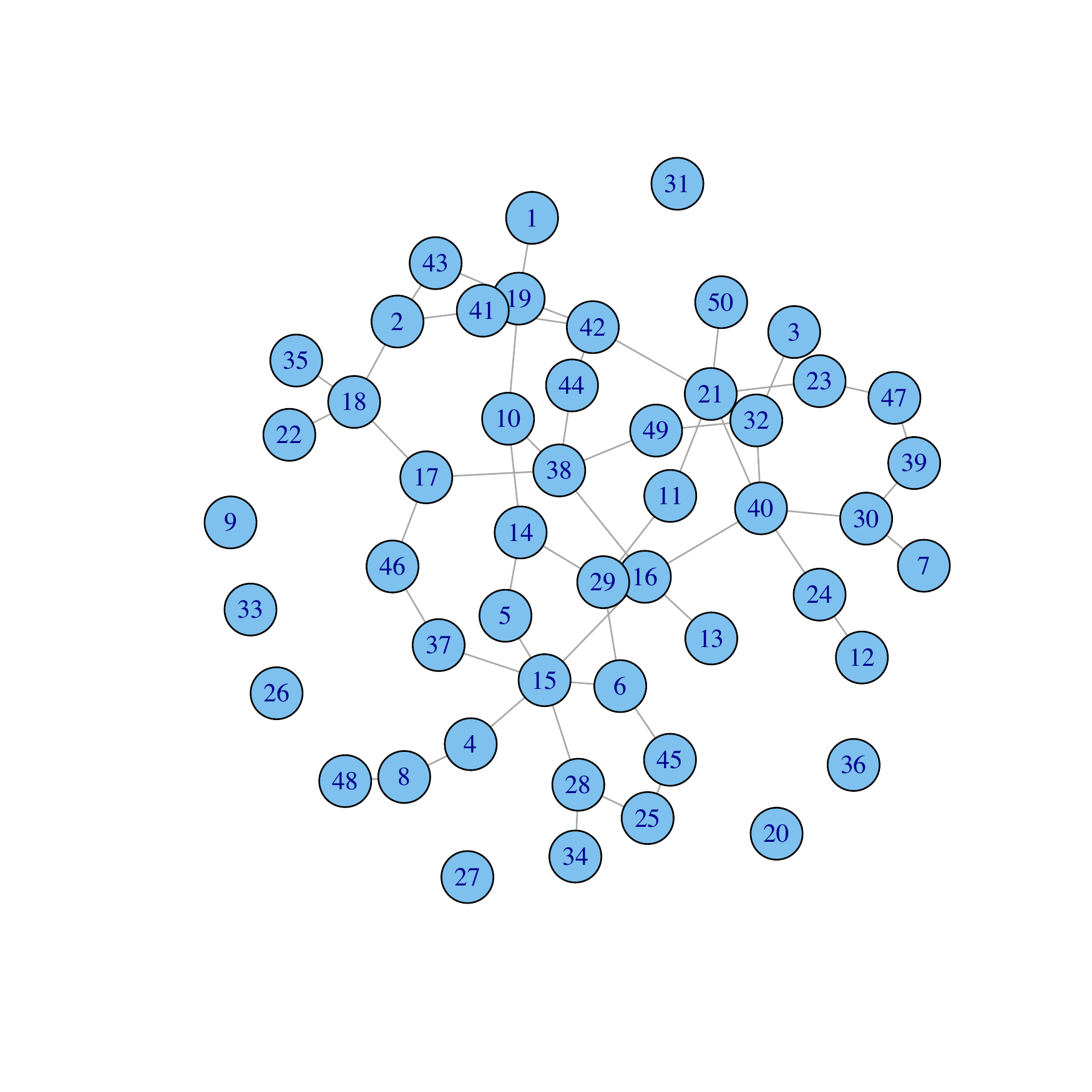}\qquad
\includegraphics[trim = 30mm 30mm 20mm 20mm,clip,width=0.45\columnwidth]{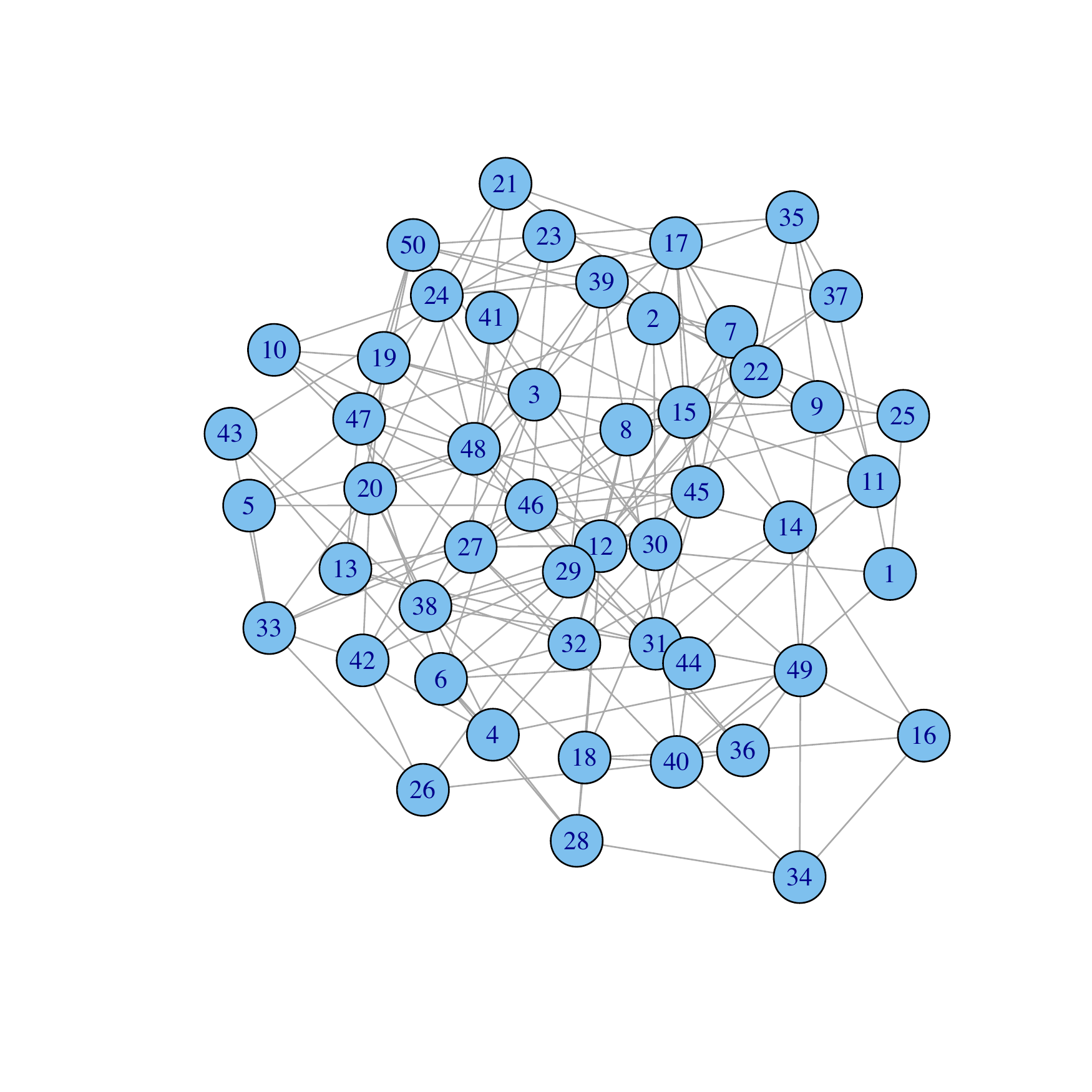}
\caption{Realizations of Erd\H{o}s-Renyi random graphs with $n=50$ vertices and $p=0.04$ (left) and $p=n^{-1/2}\approx 0.14$ (right). The graphics were produced by means of the freely available \texttt{R}-package \texttt{igraph}.}
\label{fig:ER}
\end{figure}

To underline that not only triangle counts are within the reach of our methods, we now consider the problem of \textit{counting copies of general sub-graphs $\Gamma$ in the Erd\H{o}s-Renyi random graph} $G(n,p)$. Formally, we denote by $S=S(n,p)$ the number of copies of $\Gamma$ in $G(n,p)$ and by $F:=(S-\E[S])/\sqrt{\Var[S]}$ the normalized sub-graph counting statistic. We assume that $\Gamma$ has at least one edge and in contrast to Theorem \ref{thm:ERGraph} we also assume that the success probability $p\in(0,1)$ is fixed and does not depend on $n$. In this situation, Theorem 2 in \cite{BarKarRuc} says that
$$d_1(F,N)=\mathcal{O}(n^{-1})$$
and our abstract Berry-Esseen bound can be used to conclude that the $d_1$-distance can be replaced by the Kolmogorov distance.

\begin{theorem}\label{thm:Subgraphs}
Denote by $N\sim\mathcal{N}(0,1)$ a standard Gaussian random variable and fix $p\in(0,1)$. Then $$d_K(F,N)=\mathcal{O}(n^{-1})$$
for all graphs $\Gamma$ having at least one edge.
\end{theorem}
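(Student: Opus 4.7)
The plan is to apply the abstract Berry-Esseen bound (Theorem \ref{Folgetheorem}) to the normalized sub-graph count $F = (S - \E[S])/\sqrt{\Var[S]}$. Label the $\binom{n}{2}$ possible edges of $K_n$ and introduce the non-symmetric Rademacher sequence $X = (X_i)_i$ with $P(X_i = 1) = p$, so that
\[
S = \frac{1}{|\mathrm{Aut}(\Gamma)|}\sum_{\phi} \prod_{\{u,v\} \in E(\Gamma)} \1\{X_{\{\phi(u),\phi(v)\}} = 1\},
\]
where $\phi$ ranges over injections $V(\Gamma) \hookrightarrow \{1,\ldots,n\}$. The discrete Malliavin derivative $D_i S$ records the change in $S$ when the state of edge $i$ is toggled; pointwise it is bounded by a constant times the number of copies of $\Gamma$ in $K_n$ through the edge $i$, which is of order $n^{v(\Gamma)-2}$. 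Similarly, $|D^2_{ij} S|$ is bounded by the number of copies of $\Gamma$ containing both $i$ and $j$, which — after splitting on whether the edges share zero, one or two vertices — is $O(n^{v(\Gamma)-3})$ in the worst case.

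Combined with the classical asymptotic $\Var[S] \asymp n^{2v(\Gamma) - 2}$ for fixed $p$ (whose leading term comes from pairs of copies of $\Gamma$ sharing exactly one edge), these estimates give the uniform pointwise bounds $|D_i F| = O(n^{-1})$ and $|D^2_{ij} F| = O(n^{-2})$. Plugging these into the various $\ell^p$-norms on the right-hand side of Theorem \ref{Folgetheorem} — whose summations range over pairs or quadruples of edges, of cardinality $O(n^2)$ or $O(n^4)$ — every summand collapses to order $n^{-1}$, from which the theorem follows.

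The expected main obstacle is purely combinatorial: one has to tally the powers of $n$ appearing in each of the norms in Theorem \ref{Folgetheorem}, splitting the sums according to which edges coincide or share a vertex, and checking in each case that the contributions balance to $O(n^{-1})$. Because $p$ is bounded away from $0$ and $1$, no thresholds arise and no delicate rescaling is needed, in stark contrast to the $p \to 0$ regime of Theorem \ref{thm:ERGraph}; accordingly, the rate $n^{-1}$ holds uniformly over all graphs $\Gamma$ possessing at least one edge.
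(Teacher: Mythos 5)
Your overall strategy matches the paper's: embed $G(n,p)$ into the Rademacher framework, compute pointwise bounds on $D_kF$ and $D_\ell D_kF$, and feed these into Theorem~\ref{Folgetheorem}. But the quantitative heart of the argument contains a genuine gap. You record the bounds $|D_iF|=\mathcal{O}(n^{-1})$ and $|D^2_{ij}F|=\mathcal{O}(n^{-2})$ ``in the worst case'' and then assert that ``plugging these into the various $\ell^p$-norms\ldots every summand collapses to order $n^{-1}$.'' This is false for the leading terms. The first term of the bound in Theorem~\ref{Folgetheorem} is a sum over \emph{triples} $(j,k,\ell)\in I^3$ (not pairs or quadruples, as you write; $|I^3|\asymp n^6$), and with only the worst-case bounds it evaluates to
\[
\sum_{j,k,\ell\in I}(\E[(D_jF)^2(D_kF)^2])^{1/2}(\E[(D_\ell D_jF)^2(D_\ell D_kF)^2])^{1/2}
\ \lesssim\ n^6\cdot n^{-2}\cdot n^{-4}=\mathcal{O}(1),
\]
so that $A_1=\mathcal{O}(1)$, not $\mathcal{O}(n^{-1})$. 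The same issue affects $A_2$.

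The resolution is not merely bookkeeping: one must exploit that for the \emph{typical} pair of edges, namely disjoint $e_k, e_\ell$, the second-order gradient is one order smaller, $|D_\ell D_kF|=\mathcal{O}(n^{-3})$ (and $D_\ell D_kF=0$ when $k=\ell$); only the $\mathcal{O}(n)$ edges $e_k$ sharing a vertex with a fixed $e_\ell$ carry the worse bound $\mathcal{O}(n^{-2})$. After a Cauchy--Schwarz reduction of the triple sum to a sum over $\ell$ of a square, each inner sum over $k$ is $\mathcal{O}(n^{-2})$ and the outer sum over $\ell$ contributes another factor $n^2$, yielding $A_1^2=\mathcal{O}(n^{-2})$. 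Your last paragraph gestures at the case split, but the explicit bound you set up and claim to ``plug in'' is the uniform one, which does not close the argument. You also omit the case $v(\Gamma)<4$, which the paper treats separately (for $v=3$ the ``disjoint-edges'' case gives $D_\ell D_kF=0$, and for $v=2$ one invokes the classical Berry--Esseen theorem for the binomial); this is a minor but real omission since the combinatorics of the second-order gradient change for small $\Gamma$.
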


It should be pointed out that Theorem \ref{thm:Subgraphs} can alternatively be obtained from the sharp cumulant estimate in \cite[Proposition 10.3]{ModPhi} together with \cite[Corollary 2.1]{SaulisStatu} or from the Berry-Esseen bound for decomposable random variables in \cite[Theorem 5.1]{Raic}.
 
 \medskip

Besides the number of triangles or general sub-graphs, there are several other random variables associated with the Erd\H{o}s-Renyi random graph that have found considerable attention in the literature. One statistic that has been object of much study is the \textit{number of vertices having a prescribed degree}. For example, in \cite{Kordecki} a central limit theorem for the number of isolated vertices was given, which for general degree is a result in \cite{JansonNowicki}. A rate of convergence for the $d_1$-distance as introduced above has been obtained in \cite{BarKarRuc}. A technically highly sophisticated version of Stein's method was developed in \cite{GoldsteinVertex} to deduce a corresponding Berry-Esseen bound in case that the success probability is $p=\theta/n$. Using our general Berry-Esseen bound, we are able to present a quick and streamlined proof of an extended version of this quantitative central limit theorem. We denote for $d\in\{0,1,2,\ldots\}$ by $V_{n,d}$ the number of vertices of degree $d$ in $G(n,p)$ in case that the success probability satisfies $p=\theta n^{-\alpha}$ for suitable $\alpha\in\R$ and $\theta\in(0,n^{\alpha})$ such that $\theta\asymp 1$. We finally define the normalized random variable $G_{n,d}:=(V_{n,d}-\mathbb{E}[V_{n,d}])/\sqrt{\Var[V_{n,d}]}$, $n\in\N$.

\begin{theorem}\label{thm:degrees}
Denote by $N\sim\mathcal{N}(0,1)$ a standard Gaussian random variable and fix $d\in\{0,1,2,\ldots\}$. Let $p=\theta n^{-\alpha}$ with $\alpha\in[1,2)$ and $\theta\in(0,n^{\alpha})$ such that  $\theta\asymp 1$. Then 
$$d_K(G_{n,d},N)=\begin{cases}\mathcal{O}(n^{-1+\alpha/2}) & \text{if }d=0\,,\alpha\in[1,2)\\ \mathcal{O}(n^{1/2-3d/2-\alpha+3\alpha d/2}) & \text{if }d\in\N\,,\alpha\in[1,{3d-1\over 3d-2})\,.\end{cases}$$
In particular, if $\alpha=1$,
$$d_K(G_{n,d},N)=\mathcal{O}(n^{-1/2})$$
for all $d\in\{0,1,2,\ldots\}$.
\end{theorem}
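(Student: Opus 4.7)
The plan is to apply the abstract Berry-Esseen bound of Theorem \ref{Folgetheorem} to the functional $F = G_{n,d}$. I would first encode the Erd\H{o}s-Renyi random graph as a non-symmetric Rademacher sequence $(X_{\{i,j\}})_{1\leq i<j\leq n}$, where $X_{\{i,j\}}$ takes the value $+1$ with probability $p=\theta n^{-\alpha}$ and $-1$ otherwise. Writing $Y_{ij}=(1+X_{\{i,j\}})/2$ for the corresponding edge indicator, one has
\begin{equation*}
V_{n,d} \;=\; \sum_{i=1}^n \mathbf{1}\Big\{\sum_{j\neq i} Y_{ij} = d\Big\},
\end{equation*}
which is a bounded, non-linear functional of the Rademacher sequence whose chaotic expansion we do not need to make explicit.

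Next I would exploit the locality of the discrete Malliavin derivatives. Toggling the edge $\{i,j\}$ only changes the degrees of vertices $i$ and $j$ by one, so at most those two summands of $V_{n,d}$ react; hence $|D_{\{i,j\}}V_{n,d}|$ can be bounded by a sum of indicators confining the degrees of $i$ and $j$ to $\{d-1,d,d+1\}$. The second derivative $D_{\{i,j\}}D_{\{k,\ell\}}V_{n,d}$ in addition vanishes unless $\{i,j\}\cap\{k,\ell\}\neq\emptyset$ or the affected degrees all lie in that small window around $d$. This locality is what makes the abstract bound tractable.

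The main computation is then to estimate the $\ell^p$-tensor norms of $DV_{n,d}$ and $D^2V_{n,d}$ entering Theorem \ref{Folgetheorem}, together with the variance $\sigma_n^2 := \Var[V_{n,d}]$ appearing in the normalization. Since $\deg(i)\sim\mathrm{Bin}(n-1,p)$, I can use the standard estimates
\begin{equation*}
P(\deg(i) = d) \;\asymp\;
\begin{cases}
1 & \text{if } \alpha=1,\\
n^{-d(\alpha-1)} & \text{if } \alpha>1,\ d\geq 1,
\end{cases}
\end{equation*}
and the analogous two-point estimates for $\{\deg(i)=d,\,\deg(j)=d\}$, obtained by conditioning on the edge $\{i,j\}$ and exploiting the near independence of the remaining degrees. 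Assembling these bounds and inserting them into Theorem \ref{Folgetheorem} should directly produce the claimed exponents, with the Poisson regime $\alpha=1$ yielding the particularly clean rate $n^{-1/2}$.

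The main obstacle will be the combinatorial bookkeeping required to achieve the sharp exponent $1/2 - 3d/2 - \alpha + 3d\alpha/2$ when $d\geq 1$. The relevant tensor norms decompose into sums over pairs or triples of edges with specified intersection patterns, whose dominant contributions differ across the regime $\alpha\in[1,(3d-1)/(3d-2))$; one has to track which configuration dominates and trade it off against the normalization $\sigma_n$. Moreover, the range of admissible $\alpha$ is precisely the one for which the bound is nontrivial: at $\alpha=(3d-1)/(3d-2)$ the exponent hits zero, reflecting the point at which the variance becomes too small relative to the $L^4$-norm of the discrete Malliavin derivative for the method to yield convergence.
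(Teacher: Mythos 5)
Your plan is correct in outline but takes a noticeably more elaborate route than the paper's proof. The paper handles Theorem~\ref{thm:degrees} with crude \emph{deterministic} bounds only: since toggling a single edge changes the number of degree-$d$ vertices by at most $2$, one has $|D_kG_{n,d}|\le 2\sqrt{pq}/\sqrt{\Var[V_{n,d}]}$ uniformly; and $D_kD_\ell G_{n,d}$ vanishes unless $|e_k\cap e_\ell|=1$, in which case $|D_kD_\ell G_{n,d}|\le 2pq/\sqrt{\Var[V_{n,d}]}$. Feeding these two deterministic estimates and the variance asymptotics $\Var[V_{n,0}]\asymp n^{2-\alpha}$, $\Var[V_{n,d}]\asymp n^{d+1-\alpha d}$ into Theorem~\ref{Folgetheorem} (with $r=2$, $s=t=4$), and counting edge configurations $\sum_{j,k,\ell}\1_{\{|e_j\cap e_\ell|=1\}}\1_{\{|e_k\cap e_\ell|=1\}}\asymp n^4$, already produces the claimed rates; no degree-tail probabilities and no two-point estimates are needed.

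Your proposal instead replaces the worst-case bound by the sum-of-indicators representation of $D_kV_{n,d}$ and then inserts estimates for $P(\deg(i)=d)$ and joint degree probabilities. This is a genuinely different and \emph{sharper} route: for $d\ge 2$ and $\alpha>1$ the event $\{\deg(i)\in\{d-1,d\}\}$ has probability $\asymp n^{(d-1)(1-\alpha)}\ll 1$, so your $L^p$-norms of $DG_{n,d}$ would be asymptotically smaller than what the crude bound yields, and the resulting rate would improve on the stated exponent $1/2-3d/2-\alpha+3\alpha d/2$ in that regime. The price is exactly the bookkeeping you flag: you must control two- and three-point degree correlations (conditioning on the shared edge(s)), which is more delicate and error-prone. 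If the goal is just to establish the theorem as stated, the deterministic route is strictly simpler and is what the paper does; your observation that $\alpha=(3d-1)/(3d-2)$ is where the exponent hits zero is correct, but note it coincides (for $d\ge 1$) with where the crude bound becomes trivial, not with the full range $\alpha<1+1/d$ over which $V_{n,d}$ satisfies a CLT.

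One small caution on the second derivative: the sharp statement used in the paper is that $D_kD_\ell V_{n,d}=0$ whenever $|e_k\cap e_\ell|\in\{0,2\}$, i.e.\ unless the two edges share exactly one vertex. Your phrasing (``unless they intersect or the degrees are all close to $d$'') is correct as an upper bound but obscures this clean combinatorial structure, which is what makes the triple sum tractable.
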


Our final application deals with the \textit{number of connected components} arising from \textit{bond percolation on a tree}. We recall that a rooted tree $\sT$ is an undirected graph with one distinguished vertex, the root of the tree, in which any two vertices are connected by a unique self-avoiding path. We denote for $n\in\N$ by $\sT_n$ the sub-tree of $\sT$, which consists of all vertices of $\sT$ that have graph-distance at most $n$ from the root. By $|\sT_n|$ we denote the number of edges of $\sT_n$. In what follows, we assume that each vertex of $\sT$ has degree bounded by $D+1$ with $D\in\N$ and that $\sT$ has infinitely many vertices. If the degree of the root is $D$ and if the degree of each other vertex of $\sT$ is $D+1$ for some fixed $D\in\N$, we say that $\sT$ is a $D$-regular tree. Fix $p\in(0,1)$ and assign to each edge $e$ of $\sT$, independently of the other edges, a Rademacher random variable $X_e$ such that $P(X_e=+1)=p$ and $P(X_e=-1)=1-p$. We now remove from $\sT$ all edges $e$ for which $X_e=-1$ and indicate by $\sT(p)$ the resulting random graph, see Figure \ref{fig:tree} for a simulation. Its restriction to $\sT_n$ is denoted by $\sT_n(p)$ and we let $C_n(p)$ be the number of connected components of $\sT_n(p)$. Here, by a connected component we understand a maximal connected sub-graph of $\sT_n(p)$ consisting of at least one edge; isolated vertices are not counted. Our next result is a Berry-Esseen bound for the normalized random variable $H_n(p):=(C_n(p)-\E[C_n(p)])/\sqrt{\Var[C_n(p)]}$. This adds to the qualitative central limit theorem in \cite{SugimineTakei}.

\begin{theorem}\label{thm:PercolationTree}
Fix $p\in(0,1)$ and denote by $N\sim\mathcal{N}(0,1)$ a standard Gaussian random variable. Then $$d_K(H_n(p),N)=\mathcal{O}(|\sT_n|^{-1/2})\,.$$ In particular, in case of a $D$-regular tree one has that
$$d_K(H_n(p),N)=\begin{cases} \mathcal{O}(n^{-1/2}) &\text{if }D=1\\ \mathcal{O}(D^{-n/2}) &\text{if }D\geq 2\,.\end{cases}$$
\end{theorem}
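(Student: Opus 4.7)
The plan is to apply the abstract Berry-Esseen bound Theorem \ref{Folgetheorem} to the random variable $C_n(p)$, interpreted as a functional of the non-symmetric Rademacher family $(X_e)_{e \in E(\sT_n)}$ (padded by irrelevant coordinates to an infinite sequence). This requires controlling the first- and second-order discrete Malliavin derivatives of $C_n(p)$ and establishing a linear lower bound on the variance.

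First I would analyze the first-order derivative $D_e C_n(p)$. For an edge $e=\{u,v\}$, comparing the configurations with $X_e=+1$ and $X_e=-1$, the count $C_n(p)$ can change by at most one component: if both $u$ and $v$ already possess another incident kept edge, adding $e$ merges two components ($-1$); if neither does, it creates a new one-edge component ($+1$); otherwise the change is $0$ (an isolated vertex is adjoined to an existing component, which does not count). Hence $|D_e C_n(p)| \leq c(p)$ uniformly in $e$ and in the configuration, so $\sum_e \E[(D_e C_n(p))^4] = \mathcal{O}(|\sT_n|)$.

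Next I would observe the crucial sparsity of the second-order derivative. The value of $D_{e'} C_n(p)$ depends only on whether each endpoint of $e'$ carries an additional kept incident edge. Consequently $D^2_{e,e'} C_n(p)$ vanishes unless $e$ and $e'$ share a vertex. Since the degree of $\sT$ is bounded by $D+1$, each edge has at most $2D$ neighbouring edges, so the support of $D^2 C_n(p)$ on $E(\sT_n)^2$ has cardinality at most $2D\,|\sT_n|$, and the non-zero values are uniformly bounded. Thus the fourth-moment quantities involving $D^2 C_n(p)$ that appear in Theorem \ref{Folgetheorem} are again of order $|\sT_n|$.

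The remaining and main obstacle is the variance lower bound $\Var[C_n(p)] \geq c'(p,D)\,|\sT_n|$. The plan is to exploit the bounded-degree tree structure to locate, in any configuration, a linear number of edges whose flip produces a non-zero $D_e C_n(p)$ with probability bounded below. Concretely, I would partition $E(\sT_n)$ into blocks of bounded diameter, each contributing independent-in-expectation fluctuations, or equivalently use the Hoeffding decomposition of $C_n(p)$ and bound its first chaos from below by $\sum_e \Var[\E[C_n(p) \mid X_e]]$; the fact that each edge has a constant probability of being in each of the three configurations described in Step~1 yields a constant lower bound on each summand. Combining all estimates, the bound from Theorem \ref{Folgetheorem} is of order $\sqrt{|\sT_n|}/\Var[C_n(p)] = \mathcal{O}(|\sT_n|^{-1/2})$. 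The specialization to $D$-regular trees follows from $|\sT_n| \asymp n$ when $D=1$ and $|\sT_n| \asymp D^n$ when $D \geq 2$.
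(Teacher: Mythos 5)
Your outline matches the paper's structure: apply Theorem~\ref{Folgetheorem}, observe that $|D_k H_n(p)|$ is uniformly of order $\Var[C_n(p)]^{-1/2}$ since flipping one edge changes the component count by at most one, note that $D_j D_k H_n(p)$ vanishes unless $e_j$ and $e_k$ share a vertex (so that the summations over the second-order gradient have only $\mathcal{O}(|\sT_n|)$ non-zero terms), and combine this with a variance lower bound $\Var[C_n(p)] \geq c(p)\,|\sT_n|$ to obtain the rate $\mathcal{O}(|\sT_n|^{-1/2})$. On all of these points you and the paper agree, and your classification of $C_n^+ - C_n^- \in \{-1,0,+1\}$ into the three cases (merge / create / absorb an isolated vertex) is a correct and more explicit version of what the paper states.

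The gap is in your treatment of the variance lower bound, and specifically in the claim that the block/matching argument and the Hoeffding first-chaos bound are ``equivalent.'' They are not, and the first-chaos version as you stated it is false. The first-chaos bound reads $\Var[C_n(p)] \geq \sum_e \Var[\E[C_n(p)\mid X_e]] = pq\sum_e\bigl(\E[C_n^+ - C_n^-]\bigr)^2$, and the point is that $\E[C_n^+ - C_n^-]$ is the \emph{signed} average over the three configurations; it can cancel even though each configuration occurs with positive probability. Concretely, for an interior edge $e=\{u,v\}$ with $a$ other edges at $u$ and $b$ at $v$, one computes $\E[C_n^+ - C_n^-] = (1-p)^a + (1-p)^b - 1$. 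Taking $D=1$ (a path) and $p=1/2$, this equals $0$ for every interior edge, so the first-chaos sum is only $\mathcal{O}(1)$, while the true variance of the run count is $\Theta(n)$. So the sentence ``the fact that each edge has a constant probability of being in each of the three configurations \dots yields a constant lower bound on each summand'' does not hold, and the ``or equivalently'' is where the proof would fail.

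Your other suggestion---partitioning into blocks, or conditioning on the complement of a matching of linear size and using $\Var[C_n(p)] \geq \E[\Var[C_n(p)\mid \mathcal{G}]] = pq\sum_{e\in S}\E[(C_n^+-C_n^-)^2]$---does work, because now you only need $P(C_n^+ - C_n^- \neq 0) \geq (1-p)^{2D} > 0$ rather than a lower bound on the \emph{expected} signed difference, and a bounded-degree graph always admits a matching of size $\Omega(|\sT_n|)$. This is a sound route, but it needs to be carried out carefully; it is not interchangeable with the first-chaos argument. For what it is worth, the paper sidesteps the issue entirely by citing the lower bound $\Var[C_n(p)] \geq c(p)\,|\sT_n|$ from Sugimine--Takei (identity (2.3) there, for $D$-regular trees, with a remark that the proof extends). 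So if you want a self-contained proof you should develop the matching/conditional-variance argument and drop the Hoeffding variant.
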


\begin{figure}[t]
\includegraphics[trim = 30mm 30mm 20mm 20mm,clip,width=0.45\columnwidth]{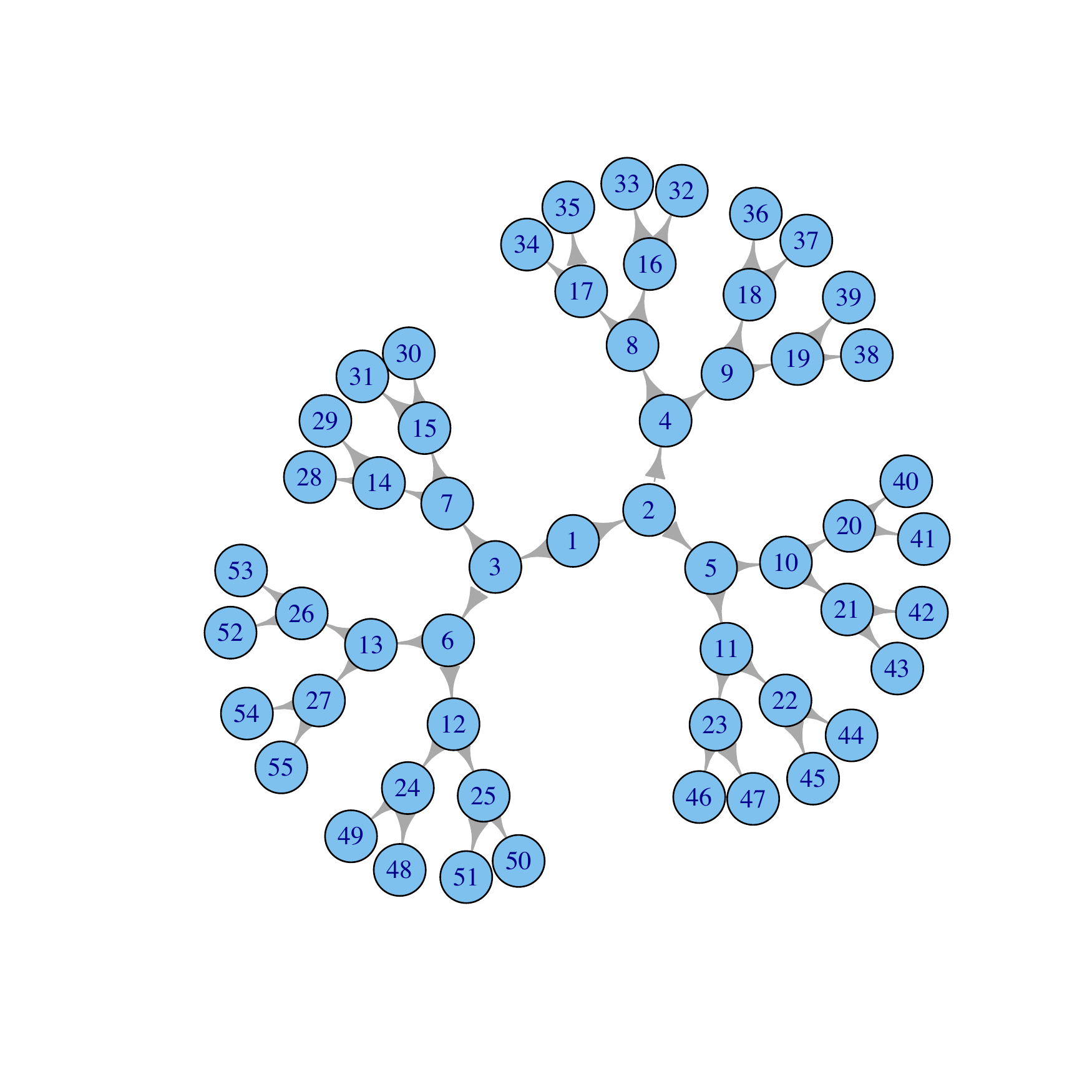}\qquad
\includegraphics[trim = 30mm 30mm 20mm 20mm,clip,width=0.45\columnwidth]{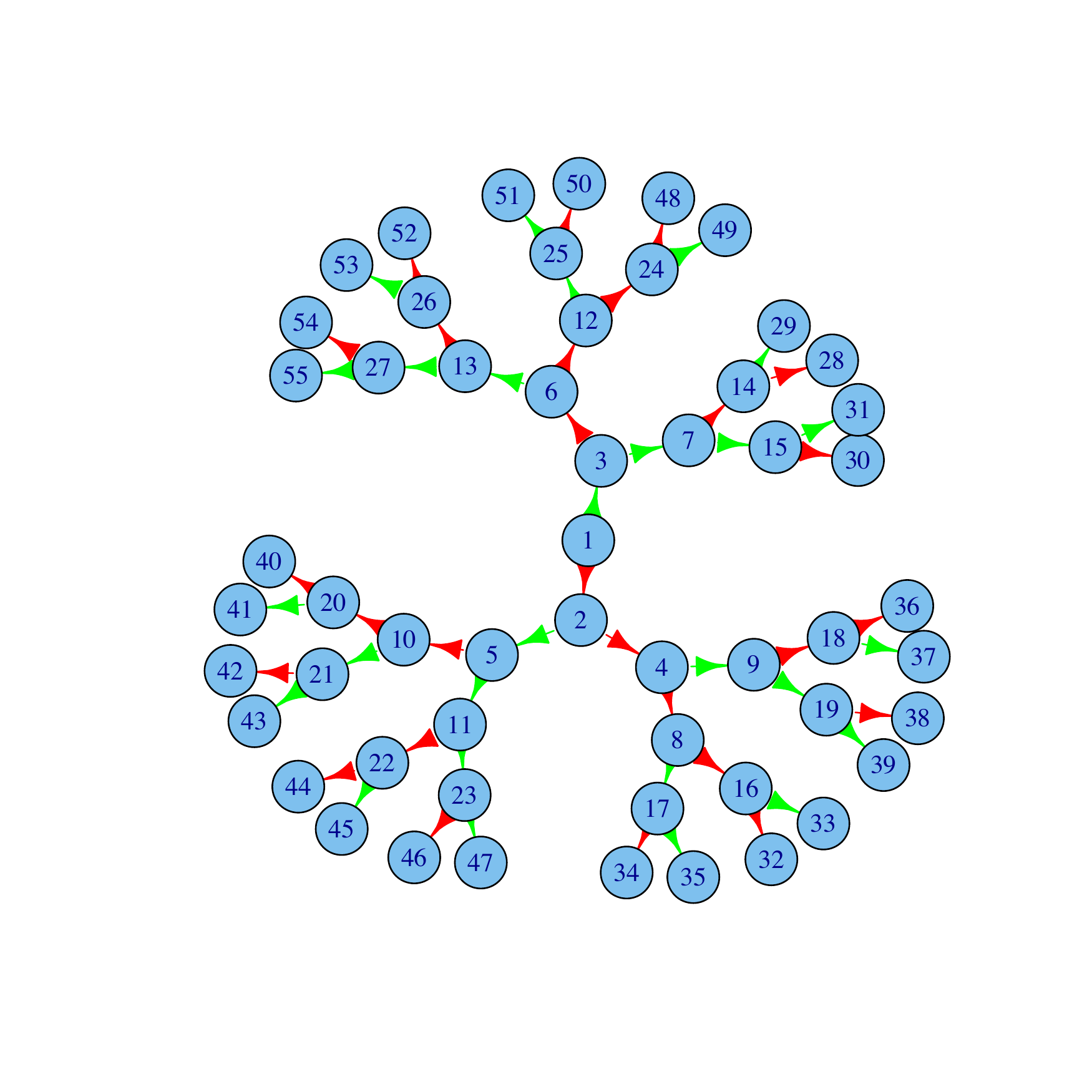}
\caption{$\sT_5$ of a $2$-regular tree $\sT$ (left). Realization of $\sT_5(p)$ with $p=1/2$ (right). The colour red means that the edge is included, while green indicates that the edge has been removed. The graphics were produced by means of the freely available \texttt{R}-package \texttt{igraph}.}
\label{fig:tree}
\end{figure}

The rest of this paper is structured as follows. In Section \ref{sec:Preliminaries} we collect some background material related to the discrete Malliavin calculus. An analysis of the discrete Ornstein-Uhlenbeck semigroup is the content of Section \ref{sec:MehlerFormel}. This is used in Section \ref{sec:AbstractBerryEsseen} to derive our abstract Berry-Esseen bound, which in turn is applied in Section \ref{sec:RandomGraphs} to the Erd\H{o}s-Renyi random graph and in Section \ref{sec:Percolation} to the percolation problem on trees. These sections also contain the proofs of Theorem \ref{thm:ERGraph}, Theorem \ref{thm:Subgraphs}, Theorem \ref{thm:degrees} and Theorem \ref{thm:PercolationTree} presented above.

\bigskip

\textit{Note added in proof}: After submission of the paper it came to our attention that a multiplication formula for discrete multiple stochastic integrals based on non-symmetric and non-homogeneous Rademacher sequences has been developed in a manuscript by Privault and Torrisi that was not available to us, but has now appeared as \cite{PriTor}. See also \cite{Kro}. 

\section{Preliminaries}\label{sec:Preliminaries}

\subsection{Set-up}\label{subsec:SetUp}

For each $k\in\N$ let $0<p_k<1$ and put $q_k:=1-p_k$. We abbreviate the sequences $(p_k)_{k\in\N}$ and $(q_k)_{k\in\N}$ by $p$ and $q$, respectively. By $X := (X_k)_{k\in\N}$ we denote a sequence of independent random variables such that
$$P(X_k=+1)=p_k\qquad\text{and}\qquad P(X_k=-1)=q_k\,,\qquad k\in\N\,.$$
This is what we call a (non-symmetric and non-homogeneous) sequence of independent Rade\-macher random variables. We construct them in the canonical way by taking $(\Omega,\mathcal{F},P)$ as probability space, where $\Omega:=\{-1,+1\}^{\N}$, $\mathcal{F}:=\mathcal{P}(\{ -1,+1 \})^{\otimes \N}$ and $P:=\bigotimes_{k=1}^\infty (p_k\delta_{+1}+q_k\delta_{-1})$, with $\mathcal{P}(M)$ being the power set of a set $M$ and $\delta_{\pm1}$ being the unit-mass Dirac measure concentrated at $\pm1$. We then put $X_k(\omega) := \omega_k$ for each $k \in \N$ and $\omega := (\omega_k)_{k \in \N} \in \Omega$. Note that $X_k$ has mean $p_k-q_k$ and variance $4p_kq_k$.

\subsection{Discrete multiple stochastic integrals}

Denote by $\kappa$ the counting measure on $\N$ and put $\ell^2(\N)^{\otimes n}:=L^2(\N^n,\mathcal{P}(\N)^{\otimes n},\kappa^{\otimes n})$ for $n\in\N$. In the following, we refer to the elements of $\ell^2(\N)^{\otimes n}$ as kernels. By $\ell^2(\N)^{\circ n}$ we denote the class of symmetric kernels and $\ell_0^2(\N)^{\circ n}$ stands for the sub-class of symmetric kernels vanishing on diagonals, i.e., vanishing on the complement of the set $\Delta_n:=\{(i_1,\dots,i_n)\in\N^n: i_k\neq i_\ell \text{ for } k\neq \ell \}$. We further put $\ell^2(\N)^{\otimes 0}:=\R$.

For $n\in\N$ and a kernel $f\in\ell_0^2(\N)^{\circ n}$ we define the discrete multiple stochastic integral of order $n$ of $f$ as
$$J_n(f):=n!\sum_{1\leq i_1<\ldots<i_n<\infty}f(i_1,\ldots,i_n)\,Y_{i_1}\cdots Y_{i_n}\,,$$
where $(Y_k)_{k\in\N}$ with $Y_k := (X_k - p_k + q_k) / (2\sqrt{p_k q_k})$ stands for the normalized sequence of independent Rademacher random variables as introduced above. We also put $J_0(c)=c$ for $c\in\R$. The space spanned by the random variables of the form $J_n(f)$ with $f\in\ell_0^2(\N)^{\circ n}$ is called the Rademacher chaos of order $n$.

Discrete multiple stochastic integrals of different orders are mutually orthogonal and satisfy the isometry relation
\begin{align}\label{Isometry formula}
\E[J_{n}(f)J_{m}(g)]=\1_{\{n=m\}} n! \langle f,g \rangle_{\ell^2(\N)^{\otimes n}}
\end{align}
for all $n,m \in \N$ and kernels $f\in\ell_0^2(\N)^{\circ n}$, $g\in\ell_0^2(\N)^{\circ m}$. Moreover, it is a classical fact that every $F\in L^2(\Omega)$ (i.e., every square-integrable Rademacher functional) admits a chaotic decomposition
\begin{align}\label{Chaos representation}
F=\E[F]+\sum_{n=1}^\infty J_n(f_n)
\end{align}
for uniquely determined kernels $f_n\in\ell_0^2(\N)^{\circ n}$, where the series converges in $L^2(\Omega)$, cf.\ \cite[Proposition 6.7]{Pri}. Together with the isometry relation for discrete multiple stochastic integrals this decomposition implies that the variance of $F$ is given by
$$\Var[F]=\sum_{n=1}^\infty n!\|f_n\|^2_{\ell^2(\N)^{\otimes n}}\,.$$

\subsection{Malliavin calculus}

In this section we introduce some basic notions from discrete Malliavin calculus and refer to \cite{Pri} for further details and background material. Let $F\in L^2(\Omega)$. The discrete gradient of $F$ in direction $k\in\N$ is given by
\begin{equation}\label{eq:PathWiseDifferenceOperator}
D_kF:=\sqrt{p_kq_k}\,(F_k^+-F_k^-)\,,
\end{equation}
where $F_k^{\pm}(\omega):=F(\omega_1, \ldots, \omega_{k-1}, \pm 1, \omega_{k+1},\ldots)$. Note that the normalization in \eqref{eq:PathWiseDifferenceOperator} is chosen such that $D_kY_k=1$.

The discrete gradient satisfies the following product formula. Namely, if $F,G\in L^2(\Omega)$ and $k\in\N$, then
\begin{equation}\label{eq:ProduktFormel}
D_k(FG)  = (D_kF)G+F(D_kG)-{X_k\over\sqrt{p_kq_k}}(D_kF)(D_kG)\,,
\end{equation}
see \cite[Proposition 7.8]{Pri}. We remark that in contrast to classical Malliavin calculus (see \cite{NualartBook}), the product formula in the discrete set-up carries the additional term $$-({X_k/\sqrt{p_kq_k}})(D_kF)(D_kG)\,,$$ which is not present in the continuous framework. A similar effect also happens on the Poisson space, cf.\ \cite{PecSolTaqUtz} and the references cited therein.

The iterated discrete gradient $D^nF := (D_{k_1,\ldots,k_n}^n F)_{k_1,\ldots,k_n\in\N}$ of order $n\geq 2$ is defined by $D_{k_1,\ldots,k_n}^nF:=D_{k_n}(D_{k_1,\ldots,k_{n-1}}^{n-1}F)$ for $k_1,\ldots,k_n\in\N$, where we put $D_k^1F:=D_kF$.

We now present a formula which allows to compute the kernels $f_n$ in a chaotic decomposition as in \eqref{Chaos representation}. In the framework of classical Malliavin calculus this is known as Stroock's formula. Since we have not found such a result for general Rademacher functionals in the literature, we provide the detailed arguments (for the special symmetric case see Lemma 2.2 in \cite{KroReiThae} and Section 2.4 in \cite{ReiPec}).

\begin{proposition}[Stroock's formula]\label{Stroock lemma}
Assume that $F \in L^2(\Omega)$ has chaotic decomposition $F=\E[F]+\sum_{n=1}^\infty J_n(f_n)$. Then for every $n \in \N$ it holds that
\begin{equation}
\E[D_{k_1, \dotsc, k_n}^nF] = \E[F\,Y_{k_1} \cdots Y_{k_n}]\,,\quad(k_1, \dotsc, k_n) \in\Delta_n\,, \label{Stroock}
\end{equation}
and 
\begin{equation}
\E[D_{k_1, \dotsc, k_n}^nF]= n!f_n(k_1, \dotsc, k_n)\,\label{Stroock kernels},\quad(k_1, \dotsc, k_n) \in\N^n.
\end{equation}
\end{proposition}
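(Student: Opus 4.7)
The plan is to establish \eqref{Stroock} first by an induction on $n$ and then to deduce \eqref{Stroock kernels} from it by plugging in the chaotic decomposition of $F$ and using the isometry \eqref{Isometry formula}. For the base case $n=1$, I would verify that $\E[D_kF]=\E[F\,Y_k]$ holds for every $F\in L^2(\Omega)$ and every $k\in\N$ by a direct computation: conditioning on the $\sigma$-algebra generated by $(X_j)_{j\neq k}$ and using that $Y_k$ equals $\sqrt{q_k/p_k}$ with probability $p_k$ and $-\sqrt{p_k/q_k}$ with probability $q_k$, one finds $\E[F\,Y_k\mid(X_j)_{j\neq k}]=\sqrt{p_kq_k}(F_k^+-F_k^-)=D_kF$, and taking expectations concludes the base case.

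For the inductive step the crucial ingredient is the product formula \eqref{eq:ProduktFormel}. If $(k_1,\dots,k_n)\in\Delta_n$, then $Y_{k_2}\cdots Y_{k_n}$ depends only on the coordinates $X_{k_2},\dots,X_{k_n}$ and is therefore unaffected by flipping $X_{k_1}$, so that $D_{k_1}(Y_{k_2}\cdots Y_{k_n})=0$. This makes the extra discrete correction term in \eqref{eq:ProduktFormel} vanish and collapses the identity to $D_{k_1}(F\,Y_{k_2}\cdots Y_{k_n})=(D_{k_1}F)\,Y_{k_2}\cdots Y_{k_n}$. Combining this with the base case applied to $F\,Y_{k_2}\cdots Y_{k_n}$ yields $\E[F\,Y_{k_1}\cdots Y_{k_n}]=\E[(D_{k_1}F)\,Y_{k_2}\cdots Y_{k_n}]$, and iterating this peeling argument $n$ times, which is legitimate because the operators $D_{k_i}$ commute on distinct indices, produces \eqref{Stroock}.

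To derive \eqref{Stroock kernels}, I would first treat the case $(k_1,\dots,k_n)\in\Delta_n$. Here $Y_{k_1}\cdots Y_{k_n}$ coincides with $J_n(g)$, where $g\in\ell_0^2(\N)^{\circ n}$ is the symmetric kernel equal to $1/n!$ on the $n!$ tuples obtained by permuting $(k_1,\dots,k_n)$ and zero elsewhere. Inserting the chaotic decomposition of $F$ into the right-hand side of \eqref{Stroock}, invoking the orthogonality of multiple stochastic integrals of different orders, and finally applying the isometry \eqref{Isometry formula}, one obtains $\E[F\,Y_{k_1}\cdots Y_{k_n}]=n!\langle f_n,g\rangle_{\ell^2(\N)^{\otimes n}}=n!\,f_n(k_1,\dots,k_n)$, the last equality following from the symmetry of $f_n$. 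The extension of \eqref{Stroock kernels} to the whole of $\N^n$ is then automatic: both sides vanish off $\Delta_n$, the right-hand side because $f_n\in\ell_0^2(\N)^{\circ n}$, and the left-hand side because $D_k^2F=0$ for every $F\in L^2(\Omega)$ (as $F_k^{\pm}$ do not depend on $X_k$), so $D^n_{k_1,\dots,k_n}F$ already vanishes pointwise whenever some index is repeated.

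The one step that is slightly more than bookkeeping is the reduction of the product formula in the second paragraph: it is there that the hypothesis $(k_1,\dots,k_n)\in\Delta_n$ and the fact that the extra discrete correction term in \eqref{eq:ProduktFormel} is quadratic in the gradients come together to give the clean collapse described above. This is really the engine that makes the argument go through without any symmetry or homogeneity assumption on the underlying Rademacher sequence; the remaining verifications in the base case and in the isometry step are routine once the correct symmetric kernel representation of $Y_{k_1}\cdots Y_{k_n}$ is identified.
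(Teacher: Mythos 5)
Your proof is correct and follows the same overall strategy as the paper's: prove \eqref{Stroock} by induction on $n$, using the product formula as the algebraic engine, and then deduce \eqref{Stroock kernels} from the isometry relation \eqref{Isometry formula}. There are two small, genuine variations. First, the paper's base case is obtained by taking $G=Y_k$ in the product formula \eqref{eq:ProduktFormel}, multiplying by $Y_k$, and taking expectations (using $\E[X_k Y_k]=2\sqrt{p_kq_k}$ and the independence of $D_kF$ from $X_k$), whereas you obtain $\E[FY_k]=\E[D_kF]$ by a direct conditional expectation computation; your route is a touch more elementary since it never invokes the product formula in the base case. Second, for the inductive step the paper applies the base case and then moves the single $Y_{k_{n+1}}$ inside $D^n_{k_1,\dots,k_n}$ before invoking the hypothesis, while you peel the $Y$'s off the product one at a time; both rest on exactly the same observation, namely that $Y_{k_j}$ acts as a constant under $D_{k_i}$ whenever $i\neq j$, which kills the discrete correction term in \eqref{eq:ProduktFormel}. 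Your treatment of \eqref{Stroock kernels}, in particular identifying $Y_{k_1}\cdots Y_{k_n}$ as $J_n(g)$ with the explicit symmetrized kernel $g$ and noting $D_k^2=0$ to handle the off-diagonal case, is a more detailed version of what the paper asserts in one line. All steps check out.
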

\begin{proof}
We start by proving \eqref{Stroock} by induction. Choosing $G=Y_k$ with $D_kG=D_kY_k=1$ in \eqref{eq:ProduktFormel} yields
\begin{align*}
D_k(FY_k) = (D_kF)Y_k + F - \frac{X_k}{\sqrt{p_kq_k}}D_kF
\end{align*}
and hence
\begin{align}\label{Stroock proof equation 1}
D_k(FY_k)Y_k &= (D_kF)Y_k^2 + FY_k - \frac{X_kY_k}{\sqrt{p_kq_k}}D_kF\,.
\end{align}
It immediately follows from \eqref{eq:PathWiseDifferenceOperator} that, for every $F \in L^2(\Omega)$, $D_kF$ is independent of $X_k$. Therefore, by taking expectations on both sides of \eqref{Stroock proof equation 1} and computing $\E[X_kY_k]=2\sqrt{p_kq_k}$, we get
\begin{align*}
0 &= \E[D_kF] + \E[FY_k] - 2\E[D_kF]= \E[FY_k] - \E[D_kF]\,,
\end{align*}
which proves \eqref{Stroock} for $n=1$. Now, assume that \eqref{Stroock} holds for some fixed $n \in \N$ and consider
\begin{align*}
\E[D_{k_1, \dotsc, k_{n+1}}^{n+1}F] = \E[D_{k_{n+1}}(D_{k_1, \dotsc, k_{n}}^{n}F)]\,.
\end{align*}
From the case $n=1$ treated above it follows that
\begin{align*}
\E[D_{k_1, \dotsc, k_{n+1}}^{n+1}F] = \E[(D_{k_1, \dotsc, k_{n}}^{n}F)Y_{k_{n+1}}]
\end{align*}
and since $Y_{k_{n+1}}$ behaves like a constant from the point of view of $D_{k_1, \dotsc, k_{n}}^{n}$, our assumption leads to
\begin{align*}
\E[D_{k_1, \dotsc, k_{n+1}}^{n+1}F] &= \E[D_{k_1, \dotsc, k_{n}}^{n}(FY_{k_{n+1}})]= \E[FY_{k_1} \cdots Y_{k_{n+1}}]\,,
\end{align*}
which concludes the proof of \eqref{Stroock}. Identity \eqref{Stroock kernels} then immediately follows from \eqref{Chaos representation} for $(k_1, \dotsc, k_n) \in\Delta_n$. For  $(k_1, \dotsc, k_n) \in\Delta_n^c$ both sides of \eqref{Stroock kernels} are equal to zero.
\end{proof}

For the rest of this section, let $F \in L^2(\Omega)$ have chaotic decomposition $F=\E[F]+\sum_{n=1}^\infty J_n(f_n)$ with kernels $f_n \in \ell_0^2(\N)^{\circ n}$ for $n \in \N$. Since $D_kF \in L^2(\Omega)$ for every $k \in \N$, the discrete gradient also has a chaotic decomposition. Note that the kernels of this decomposition can be deduced from the chaotic decomposition of $F$ using Stroock's formula. More precisely, the $n$'th kernel of the chaotic decomposition of $D_kF$ evaluated at $(k_1,\ldots,k_n)\in\N^n$ is given by
\begin{align*}
\frac{1}{n!} \E[D_{k_1, \dotsc, k_n}^n(D_kF)] = \frac{1}{n!} \E[D_{k_1, \dotsc, k_n, k}^{n+1}F] = (n+1)f_{n+1}(k_1, \dotsc, k_n, k)\,.
\end{align*}
Thus, the discrete gradient can be written as
\begin{equation}\label{eq:DefGradient}
D_kF=\sum_{n=1}^{\infty}nJ_{n-1}(f_n(\,\cdot\,, k))\,,
\end{equation}
where $f_n( \, \cdot \,, k) \in \ell_0^2(\N)^{\circ n-1}$ denotes the kernel $f_n$ with one of its components fixed, thus acting as function in $n-1$ variables. For $F \in L^2(\Omega)$ as above and $m \in \N$, we say that $F \in {\rm dom}(D^m)$, if
\begin{align*}
\E[\|D^mF\|^2_{\ell^2(\N)^{\otimes m}}]=\sum_{n=m}^\infty \left( \frac{n!}{(n-m)!} \right)^2 (n-m)!\|f_n\|^2_{\ell^2(\N)^{\otimes n}} < \infty \,.
\end{align*}

Next, we define the Ornstein-Uhlenbeck operator $L$ and its (pseudo-)inverse $L^{-1}$. The domain of $L$ is the class of all  $F\in L^2(\Omega)$ for which $$\E[(LF)^2]=\sum_{n=1}^\infty n^2n!\|f_n\|^2_{\ell^2(\N)^{\otimes n}} < \infty \,.$$
For $F\in{\rm dom}(L)$ we put
$$LF:=-\sum_{n=1}^{\infty}nJ_n(f_n)\,.$$
The discrete Ornstein-Uhlenbeck semigroup $(P_t)_{t\geq 0}$ associated with $L$ is defined as
\begin{equation}\label{eq:OUSemigroup}
P_tF:=\sum_{n=0}^{\infty}e^{-nt}J_n(f_n)\,,\qquad t\geq 0\,.
\end{equation}
The properties of this semigroup will be discussed in detail in Section \ref{sec:MehlerFormel} below.
Moreover, for centred $F\in L^2(\Omega)$ we put
$$ L^{-1}F:=-\sum_{n=1}^{\infty}{1\over n}J_n(f_n)$$
and call $L^{-1}$ the (pseudo-)inverse of the Ornstein-Uhlenbeck operator $L$.

%%%%%%%%%%%%%%%%%%%%%%%%%%%%%%%%%%%%%%%%%%%%%%%%%%%%%%%%%%%%%

Furthermore, we introduce the discrete divergence operator $\delta$ and its domain ${\rm dom}(\delta)$.
For $u:=(u_k)_{k \in \N} \in (L^2(\Omega))^{\N}$ with
\begin{align*}
u_k := \sum_{n=0}^\infty J_n(g_{n+1}(\, \cdot \,, k))\,,
\end{align*}
where $g_{n+1} \in \ell_0^2(\N)^{\circ n} \otimes \ell^2(\N)$ for $n \in \N$, we say that $u \in {\rm dom}(\delta)$, if
\begin{align}\label{eq:DomDelta}
\sum_{n=0}^\infty (n+1)! \lnormb{2}{n+1}{\widetilde{g_{n+1}} \1_{\Delta_{n+1}}}^2 < \infty\,.
\end{align}
Here and in the following $\tilde{f}(k_1,\ldots,k_n):=\frac{1}{n!}\sum_{\sigma}f(k_{\sigma(1)},\ldots,k_{\sigma(n)})$ denotes the canonical symmetrization of a function $f$ in $n$ variables, where the sum runs over all permutations $\sigma$ of the set $\{1,\ldots,n\}$.\\
For $u \in {\rm dom}(\delta)$, the discrete divergence operator is defined as
\begin{align*}\label{eq:DefDivergenceOperator}
\delta(u):=\sum_{n=0}^\infty J_{n+1}(\widetilde{g_{n+1}} \1_{\Delta_{n+1}})\,.
\end{align*}
Note that, for $u \in {\rm dom}(\delta)$, \eqref{eq:DomDelta} is equivalent to
\begin{align*}
\E[\delta(u)^2] < \infty\,.
\end{align*}
As the adjoint of the discrete gradient, $\delta$ satisfies the integration-by-parts formula
\begin{equation}\label{eq:IntegrationByPartsOriginal}
\E[F\delta(u)] = \E[\langle DF,u\rangle_{\ell^2(\N)}]
\end{equation}
for $F\in{\rm dom}(D)$ and $u\in{\rm dom}(\delta)$, cf.\ \cite[Proposition 9.2]{Pri}.
The operators $D$, $L$ and $\delta$ are related by the identity 
\begin{equation}\label{eq:-DeltaD=L}
-\delta D = L\,.
\end{equation}
In this paper, we make use of the following crucial consequence of \eqref{eq:IntegrationByPartsOriginal} and \eqref{eq:-DeltaD=L}. If $f:\R\to\R$ is measurable and $F\in L^2(\Omega)$ is centred with $f(F)\in{\rm dom}(D)$, then
\begin{equation}\label{eq:IntegrationByParts}
\E[Ff(F)] = \E[\langle Df(F),-DL^{-1}F\rangle_{\ell^2(\N)}]\,.
\end{equation}
Indeed, using \eqref{eq:IntegrationByPartsOriginal} and \eqref{eq:-DeltaD=L} we have that
\begin{align*}
\E[Ff(F)] = \E[LL^{-1}Ff(F)] = \E[-\delta DL^{-1}Ff(F)] = \E[\langle Df(F),-DL^{-1}F\rangle_{\ell^2(\N)}]\,.
\end{align*}

Now, we present an analogue of the integration-by-parts formula \eqref{eq:IntegrationByPartsOriginal} for functionals $F \in L^2(\Omega)$ that do not necessarily belong to ${\rm dom}(D)$ (we refer to Lemma 2.2 in \cite{LastPeccatiSchulte} for a related result on the Poisson space).

\begin{proposition}\label{Indicator adjointness lemma} Let $F \in L^2(\Omega)$. Furthermore, let $u:=(u_k)_{k \in \N} \in (L^2(\Omega))^{\N}$ with
	\begin{align*}
	u_k := \sum_{n=0}^\infty J_n(g_{n+1}(\, \cdot \,, k))\,,
	\end{align*}
	where $g_{n+1} \in \ell_0^2(\N)^{\circ n} \otimes \ell^2(\N)$ for $n \in \N$ and
\begin{align}\label{Indicator adjointness u}
\sum_{n=0}^\infty (n+1)! \ellnorm{2}{n+1}{g_{n+1}}^2 < \infty\,.
\end{align}
Further assume that $(D_kF)u_k \geq 0$ $P$-almost surely for every $k \in \N$. Then $u\in\text{dom}(\delta)$ and
\begin{align}\label{Indicator adjointness}
\E[F\delta(u)] = \E[\langle DF, u \rangle_{\ell^2(\N)}]\,.
\end{align}
\end{proposition}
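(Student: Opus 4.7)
The plan is to derive \eqref{Indicator adjointness} from the classical integration-by-parts identity \eqref{eq:IntegrationByPartsOriginal} by a two-stage approximation: first truncate $F$ in its chaos expansion to push it into $\text{dom}(D)$, then cut the sequence $u$ off to its first $K$ coordinates. The sign hypothesis $(D_kF)u_k \geq 0$ will enter only at the very last step, where it will allow us to invoke monotone convergence.

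First, I observe that \eqref{Indicator adjointness u} already forces $u \in \text{dom}(\delta)$, since symmetrization is a contraction on $\ell^2(\N)^{\otimes n+1}$ and multiplication by $\1_{\Delta_{n+1}}$ only decreases the $\ell^2$-norm, so the hypothesis bounds the series in \eqref{eq:DomDelta}. Write $F = \E[F] + \sum_{n\geq 1} J_n(f_n)$ and set $F_N := \E[F] + \sum_{n=1}^N J_n(f_n)$. Each $F_N$ is a finite sum of multiple integrals and hence belongs to $\text{dom}(D)$; moreover $F_N \to F$ in $L^2(\Omega)$, and for each fixed $k$ the pathwise formula \eqref{eq:PathWiseDifferenceOperator} combined with the trivial bound $\E[((F-F_N)_k^{\pm})^2] \leq \E[(F-F_N)^2]/\min(p_k,q_k)$ yields $D_kF_N \to D_kF$ in $L^2(\Omega)$. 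Analogously, $u^{(K)} := (u_k\,\1_{\{k\leq K\}})_{k\in\N}$ lies in $\text{dom}(\delta)$ and $\delta(u^{(K)}) \to \delta(u)$ in $L^2(\Omega)$ by dominated convergence applied to the chaotic expansion of $\delta$ (the dominant $(n+1)!\,\|g_{n+1}\|^2$ being summable by \eqref{Indicator adjointness u}).

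With $F_N\in\text{dom}(D)$ and $u^{(K)}\in\text{dom}(\delta)$, the classical identity \eqref{eq:IntegrationByPartsOriginal} applies and gives
$$\E[F_N\,\delta(u^{(K)})] = \E[\langle DF_N, u^{(K)}\rangle_{\ell^2(\N)}] = \sum_{k=1}^K \E[(D_kF_N)\,u_k].$$
Keeping $K$ fixed and letting $N\to\infty$, the left-hand side converges to $\E[F\,\delta(u^{(K)})]$ by Cauchy-Schwarz, while the right-hand side is a finite sum of terms each of which passes to the limit by Cauchy-Schwarz, yielding
$$\E[F\,\delta(u^{(K)})] = \sum_{k=1}^K \E[(D_kF)\,u_k].$$

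Finally, send $K\to\infty$. The left-hand side tends to $\E[F\,\delta(u)]$ thanks to the $L^2$-convergence $\delta(u^{(K)})\to\delta(u)$. On the right-hand side the sign hypothesis becomes decisive: since $(D_kF)\,u_k \geq 0$ $P$-almost surely, each expectation is non-negative and the partial sums are monotone in $K$, so monotone convergence identifies the limit as $\sum_{k=1}^\infty \E[(D_kF)\,u_k]$, which by Tonelli equals $\E[\langle DF, u\rangle_{\ell^2(\N)}]$. This establishes \eqref{Indicator adjointness}. The step I expect to be the crux is precisely this final one: without the sign assumption there is no obvious way to control the tail in $k$ after leaving $\text{dom}(D)$, and the inner product $\langle DF,u\rangle_{\ell^2(\N)}$ only makes sense a priori as a pointwise $[0,\infty]$-valued quantity; the hypothesis $(D_kF)u_k\geq 0$ is exactly what converts it into a genuine $L^1$ pairing of finite value $\E[F\delta(u)]$.
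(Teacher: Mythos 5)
Your proof is correct, and it takes a genuinely different route from the paper's. The paper argues by direct computation: it expands both $\E[F\delta(u)]$ and $\E[\langle DF,u\rangle_{\ell^2(\N)}]$ into chaos using the isometry \eqref{Isometry formula}, shows that both sides reduce to $\sum_{n\geq 0}(n+1)!\langle f_{n+1},g_{n+1}\rangle_{\ell^2(\N)^{\otimes n+1}}$, and invokes the sign hypothesis only to justify the Tonelli step $\E[\langle DF,u\rangle_{\ell^2(\N)}]=\sum_k\E[(D_kF)u_k]$; the inner Fubini exchange of the double sum over $n$ and $k$ is then controlled by a Cauchy--Schwarz chain bounded by $(\E[F^2])^{1/2}$ times the square-root of \eqref{Indicator adjointness u}. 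Your argument instead reduces the statement to the already-known identity \eqref{eq:IntegrationByPartsOriginal} by a double truncation, taking $N\to\infty$ first (easy, since $u^{(K)}$ is finitely supported in $k$) and then $K\to\infty$, with the sign hypothesis again appearing precisely where one must control the tail in $k$. Both uses of the hypothesis are the same in spirit; what differs is the vehicle. The paper's computation is more explicit and produces the common value of both sides, which can be useful downstream, whereas your approximation argument is conceptually cleaner in that it exhibits the proposition as a limiting extension of \eqref{eq:IntegrationByPartsOriginal} beyond ${\rm dom}(D)$ rather than re-proving adjointness from scratch. One small point worth making explicit: when you write $D_kF_N\to D_kF$ in $L^2$, the cleanest justification in this framework is via Stroock's formula (Proposition \ref{Stroock lemma}), which gives the chaos expansion $D_kF=\sum_{m\geq 1}mJ_{m-1}(f_m(\,\cdot\,,k))$ for any $F\in L^2(\Omega)$ and hence $\E[(D_kF-D_kF_N)^2]=\sum_{m>N}m^2(m-1)!\lnorm{2}{(m-1)}{f_m(\,\cdot\,,k)}^2\to 0$; your conditional-second-moment bound $\E[(G_k^\pm)^2]\leq\E[G^2]/\min(p_k,q_k)$ works too, but it is a slightly less standard route here.
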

\begin{proof}
Note that \eqref{Indicator adjointness u} implies \eqref{eq:DomDelta} and hence $u\in\text{dom}(\delta)$. Since $F \in L^2(\Omega)$, it can be represented as
\begin{align*}
F = \sum_{n=0}^\infty J_n(f_n)
\end{align*}
with kernels $f_0:=\E[F]$ and $f_n \in \ell_0^2(\N)^{\circ n}$ for $n \in \N$. The isometry in \eqref{Isometry formula} yields
\begin{align}\label{Indicator adjointness proof equation 1}
\E[F\delta(u)] &= \E \Big[ \Big( \sum_{n=0}^\infty J_n(f_n) \bigg) \bigg( \sum_{n=0}^\infty J_{n+1}(\widetilde{g_{n+1}} \1_{\Delta_{n+1}}) \Big) \Big] \notag\\
&= \sum_{n=0}^\infty (n+1)! \langle f_{n+1}, \widetilde{g_{n+1}} \1_{\Delta_{n+1}} \rangle_{\ell^2(\N)^{\otimes n+1}} \notag\\
&= \sum_{n=0}^\infty (n+1)! \langle f_{n+1}, g_{n+1} \rangle_{\ell^2(\N)^{\otimes n+1}}\,.
\end{align}
Note that the last step in \eqref{Indicator adjointness proof equation 1} is valid, since, for every $n \in \N$, $f_n$ is symmetric and vanishes on diagonals.

Since $(D_kF)u_k \geq 0$ $P$-almost surely for every $k \in \N$ and by the isometry formula for discrete multiple stochastic integrals, we get
\begin{align}\label{Indicator adjointness proof equation 2}
\E[\langle DF, u \rangle_{\ell^2(\N)}] &= \sum_{k=1}^\infty \E[(D_kF)u_k] \notag\\
&= \sum_{k=1}^\infty \E \Big[ \Big( \sum_{n=0}^\infty (n+1)J_n(f_{n+1}(\, \cdot \, ,k)) \Big) \Big( \sum_{n=0}^\infty J_n(g_{n+1}(\, \cdot \, ,k)) \Big) \Big] \notag\\
&= \sum_{k=1}^\infty \sum_{n=0}^\infty (n+1)! \langle f_{n+1}(\, \cdot \, ,k), g_{n+1}(\, \cdot \, ,k) \rangle_{\ell^2(\N)^{\otimes n}} \notag\\
&= \sum_{n=0}^\infty (n+1)! \sum_{k=1}^\infty \langle f_{n+1}(\, \cdot \, ,k), g_{n+1}(\, \cdot \, ,k) \rangle_{\ell^2(\N)^{\otimes n}} \notag\\
&= \sum_{n=0}^\infty (n+1)! \langle f_{n+1}, g_{n+1} \rangle_{\ell^2(\N)^{\otimes n+1}}\,.
\end{align}
Note that the exchange of summation in the penultimate step of \eqref{Indicator adjointness proof equation 2} is valid by Fubini's theorem, since a repeated application of the Cauchy-Schwarz inequality yields that
\begin{align*}
&\sum_{n=0}^\infty \sum_{k=1}^\infty \big|(n+1)! \langle f_{n+1}(\, \cdot \, ,k), g_{n+1}(\, \cdot \, ,k) \rangle_{\ell^2(\N)^{\otimes n}}\big|\\
&\leq \sum_{n=0}^\infty (n+1)! \sum_{k=1}^\infty \ellnorm{2}{n}{f_{n+1}(\, \cdot \, ,k)} \ellnorm{2}{n}{g_{n+1}(\, \cdot \, ,k)}\\
&\leq \sum_{n=0}^\infty (n+1)! \Big( \sum_{k=1}^\infty \ellnorm{2}{n}{f_{n+1}(\, \cdot \, ,k)}^2 \Big)^\frac{1}{2} \Big( \sum_{k=1}^\infty \ellnorm{2}{n}{g_{n+1}(\, \cdot \, ,k)}^2 \Big)^\frac{1}{2}\\
&= \sum_{n=0}^\infty (n+1)! \ellnorm{2}{n+1}{f_{n+1}} \ellnorm{2}{n+1}{g_{n+1}}\\
&\leq \Big( \sum_{n=0}^\infty (n+1)! \ellnorm{2}{n+1}{f_{n+1}}^2 \Big)^\frac{1}{2} \Big( \sum_{n=0}^\infty (n+1)! \ellnorm{2}{n+1}{g_{n+1}}^2 \Big)^\frac{1}{2}\\
&\leq (\E[F^2])^\frac{1}{2} \Big( \sum_{n=0}^\infty (n+1)! \ellnorm{2}{n+1}{g_{n+1}}^2 \Big)^\frac{1}{2} < \infty\,.
\end{align*}
Comparing \eqref{Indicator adjointness proof equation 1} and \eqref{Indicator adjointness proof equation 2} completes the proof.
\end{proof}

Finally, we recall the following Skorohod isometric formula for the discrete divergence operator. Namely, for all $u\in\text{dom}(\delta)$ it holds that
\begin{equation}\label{eq:SkorohodIsometry}
\E [\delta(u)^2]=\E[\|u\|_{\ell^2(\N)}^2]+\E\Big[\sum_{k,\ell=1}^\infty (D_ku_\ell)(D_\ell u_k)\Big]
\end{equation}
according to Proposition 9.3 in \cite{Pri}.

\section{The discrete Ornstein-Uhlenbeck semigroup}\label{sec:MehlerFormel}

For real $t \geq 0$ define the random sequence $X^t:=(X_k^t)_{k\in\N}$ by
$$X_k^t:=X_k^*\,\1_{\{Z_k\leq t\}}+X_k\,\1_{\{Z_k>t\}}\,,$$ 
where $(X_k^*)_{k\in\N}$ is an independent copy of the Rademacher sequence $X=(X_k)_{k\in\N}$ and $(Z_k)_{k\in\N}$ is a sequence of independent and exponentially distributed random variables with mean $1$, independent of all other random variables.

Our first result is a discrete analogue of Mehler's formula on the Wiener or Poisson chaos for which we refer to \cite{NouPecPTRF09} and \cite{LastPeccatiSchulte}, respectively. It expresses the discrete Ornstein-Uhlenbeck semigroup $(P_t)_{t\geq 0}$ defined at \eqref{eq:OUSemigroup} in terms of a conditional expectation. Note that this has already been shown in \cite[Proposition 10.8]{Pri}. Since Mehler's formula is a central device in our approach, we include an elementary and direct proof. 

\begin{proposition}[Mehler's formula]\label{Mehler}
Let $F\in L^2(\Omega)$. The process $(X^t)_{t \geq 0}$ is the Ornstein-Uhlenbeck process associated with $(P_t)_{t \geq 0}$ by the relation
$$P_tF = \E[F(X^t) \, \vert \, X] \qquad P\text{-a.s.}$$
for all $t \geq 0$.
\end{proposition}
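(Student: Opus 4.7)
The plan is to verify the identity first on each Rademacher chaos and then extend by linearity and $L^2$-continuity. Note that $(X_k^t)_{k\in\N}$ has the same distribution as $(X_k)_{k\in\N}$: each coordinate takes the value $+1$ with probability $p_k$ and $-1$ with probability $q_k$, and the coordinates are independent across $k$ since the underlying triples $(X_k, X_k^*, Z_k)$ are. Consequently, the evaluation map $F \mapsto F(X^t)$ is an isometry on $L^2$, and combined with the $L^2$-contractivity of $\E[\,\cdot\,|\,X]$, this reduces the claim to showing that
\begin{equation*}
\E[J_n(f)(X^t) \mid X] = e^{-nt}\,J_n(f) \qquad \text{for every } n\in\N \text{ and every } f\in\ell_0^2(\N)^{\circ n},
\end{equation*}
together with an interchange of the resulting series with the conditional expectation.

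For the chaos-wise identity I would first compute $\E[Y_k^t \mid X]$. Using $P(Z_k > t) = e^{-t}$, the independence of $(X^*, Z)$ from $X$, and the normalization $Y_k^t := (X_k^t - p_k + q_k)/(2\sqrt{p_k q_k})$, a direct calculation gives
\begin{equation*}
\E[X_k^t \mid X] = (p_k - q_k)(1 - e^{-t}) + X_k\,e^{-t}, \qquad \text{hence} \qquad \E[Y_k^t \mid X] = e^{-t}\,Y_k.
\end{equation*}
For pairwise distinct indices $i_1, \ldots, i_n$, the variables $Y_{i_1}^t, \ldots, Y_{i_n}^t$ are built from the disjoint independent triples $(X_{i_j}, X_{i_j}^*, Z_{i_j})$, so they remain conditionally independent given $X$. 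Therefore
\begin{equation*}
\E\bigl[Y_{i_1}^t \cdots Y_{i_n}^t \,\big|\, X\bigr] = \prod_{j=1}^n \E[Y_{i_j}^t \mid X] = e^{-nt}\,Y_{i_1}\cdots Y_{i_n}.
\end{equation*}
Multiplying by $n!\,f(i_1, \ldots, i_n)$ and summing over $i_1 < \cdots < i_n$ yields $\E[J_n(f)(X^t) \mid X] = e^{-nt}\,J_n(f) = P_t J_n(f)$.

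The most delicate point is the passage from chaoses back to $F$, for which I would use the partial sums $S_N := \sum_{n=0}^N J_n(f_n)$: on the one hand $S_N(X^t) \to F(X^t)$ in $L^2$ by the isometry noted in the first paragraph, so $\E[S_N(X^t) \mid X] \to \E[F(X^t) \mid X]$ in $L^2$ by Jensen's inequality; on the other hand, the chaos-wise identity gives $\E[S_N(X^t) \mid X] = \sum_{n=0}^N e^{-nt} J_n(f_n) \to P_t F$ in $L^2$, where the convergence of the latter series follows from the standard estimate $\sum_n n!\,e^{-2nt}\|f_n\|^2 \leq \sum_n n!\,\|f_n\|^2 = \Var[F] + (\E[F])^2 < \infty$. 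Equating the two limits completes the proof.
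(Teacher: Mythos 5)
Your proof is correct and follows essentially the same route as the paper's: compute $\E[Y_k^t\mid X]=e^{-t}Y_k$, invoke conditional independence of the $Y_{i_j}^t$ given $X$ to get the product formula on each chaos, and then extend to general $F$ by a density/continuity argument. The only cosmetic difference is the choice of dense class used for the passage to the limit (you truncate the chaos expansion, whereas the paper truncates to functionals depending on finitely many Rademacher coordinates); both are standard and the structure of the argument is the same.
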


\begin{proof}
We first notice that for each $t \geq 0$, $(X_k^t)_{k \in \N}$ is a sequence of independent Rademacher random variables with the same distribution as the sequence $(X_k)_{k \in \N}$. Thus, if $F = \E[F] + \sum_{n=1}^\infty J_n(f_n)$, then $F(X^t)$ has chaotic decomposition
\begin{align}\label{OUDecomp}
F(X^t) = \E[F] + \sum_{n=1}^\infty n! \sum_{1\leq i_1<\ldots<i_n<\infty}f_n(i_1,\ldots,i_n)\,Y_{i_1}^t \cdots Y_{i_n}^t \,,
\end{align}
where both decompositions share the same kernels $f_n \in \ell_0^2(\N)^{\circ n}$, for $n \in \N$, and where the sequence $(Y_k^t)_{k \in \N}$ with 
\begin{equation}\label{eq:processY}
Y_k^t := (X_k^t - p_k + q_k) / (2\sqrt{p_k q_k})=Y_k^*\,\1_{\{Z_k\leq t\}}+Y_k\,\1_{\{Z_k>t\}},
\end{equation}
 for $t \geq 0$, is the normalization of the sequence $(X_k^t)_{k \in \N}$. Here, the random variable $Y_k^*$ is the normalization of $X_k^*$ for every $k\in\N$. Using the independence of the sequences $(X_k)_{k \in \N}$, $(X_k^*)_{k \in \N}$ and $(Z_k)_{k \in \N}$ we deduce from \eqref{eq:processY} that
\begin{align*}
\E[Y_k^t \, \vert \, X_k] =\E[Y_k^*]\cdot P(Z_k\leq t)+Y_k\cdot P(Z_k>t) = Y_k \, e^{-t} \,.
\end{align*}
For a functional $F_d$ only depending on the first $d$ Rademacher random variables we compute by using the chaotic decomposition in \eqref{OUDecomp} as well as linearity and independence,
\begin{align}\label{FiniteMehler}
&\E[F_d(X_1^t, \dotsc, X_d^t) \, \vert \, X] \notag\\
&= \E[F_d(X_1,\ldots,X_d)] + \sum_{n=1}^d n! \sum_{1 \leq i_1<\ldots<i_n \leq d}f_n^{(d)}(i_1,\ldots,i_n) \, \E[Y_{i_1}^t \, \vert \, X_{i_1}] \cdots \E[Y_{i_n}^t \, \vert \, X_{i_n}] \notag\\
&= \E[F_d(X_1,\ldots,X_d)] + \sum_{n=1}^d e^{-nt} \, n! \sum_{1 \leq i_1<\ldots<i_n \leq d}f_n^{(d)}(i_1,\ldots,i_n) \, Y_{i_1} \cdots Y_{i_n} \notag\\
&= \E[F_d(X_1,\ldots,X_d)] + \sum_{n=1}^d e^{-nt} \, J_n(f_n^{(d)}) \notag\\
&= P_t F_d(X_1,\ldots,X_d) \,.
\end{align}
The general case follows from \eqref{FiniteMehler} due to the fact that the set of functionals depending only on finitely many Rademacher variables is dense in $L^2(\Omega)$ and that  both sides of \eqref{FiniteMehler} are continuous functions of $F_d$. 
\end{proof}

As a next step, we derive an integral representation for the expression $-D^mL^{-1}F$, i.e., the $m$-fold iterated discrete gradient applied to $-L^{-1}F$.

\begin{proposition}\label{IntMehler}
For $m,k_1,\ldots,k_m\in\N$ and centred $F\in{\rm dom}(D^m)$ one has that $$-D_{k_1,\ldots,k_m}^mL^{-1}F=\int_0^\infty e^{-mt}P_tD_{k_1,\ldots,k_m}^mF\, dt\qquad P\text{-a.s.}$$
\end{proposition}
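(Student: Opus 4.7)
The approach is a direct chaos expansion computation. I would start from the chaotic decomposition $F=\sum_{n=1}^{\infty}J_n(f_n)$ (since $F$ is centred) and compute both sides of the claimed identity term by term, matching them at the level of individual chaoses.

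First, from \eqref{eq:DefGradient} and an obvious induction one obtains
\begin{equation*}
D^{m}_{k_1,\ldots,k_m}J_n(f_n)=\frac{n!}{(n-m)!}\,J_{n-m}\bigl(f_n(\,\cdot\,,k_1,\ldots,k_m)\bigr)
\end{equation*}
for $n\ge m$ and zero otherwise. Applying this to $-L^{-1}F=\sum_{n=1}^{\infty}n^{-1}J_n(f_n)$ yields
\begin{equation*}
-D^{m}_{k_1,\ldots,k_m}L^{-1}F=\sum_{n=m}^{\infty}\frac{(n-1)!}{(n-m)!}\,J_{n-m}\bigl(f_n(\,\cdot\,,k_1,\ldots,k_m)\bigr),
\end{equation*}
the series converging in $L^2(\Omega)$ thanks to the assumption $F\in\mathrm{dom}(D^m)$.

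Next I would compute the right-hand side. By the same formula
\begin{equation*}
D^{m}_{k_1,\ldots,k_m}F=\sum_{n=m}^{\infty}\frac{n!}{(n-m)!}\,J_{n-m}\bigl(f_n(\,\cdot\,,k_1,\ldots,k_m)\bigr),
\end{equation*}
and the definition \eqref{eq:OUSemigroup} of $P_t$ gives
\begin{equation*}
P_t D^{m}_{k_1,\ldots,k_m}F=\sum_{n=m}^{\infty} e^{-(n-m)t}\,\frac{n!}{(n-m)!}\,J_{n-m}\bigl(f_n(\,\cdot\,,k_1,\ldots,k_m)\bigr).
\end{equation*}
Multiplying by $e^{-mt}$ and integrating over $t\in[0,\infty)$, a formal interchange of sum and integral produces
\begin{equation*}
\int_0^\infty e^{-mt}P_t D^{m}_{k_1,\ldots,k_m}F\,dt=\sum_{n=m}^{\infty}\frac{n!}{n\,(n-m)!}\,J_{n-m}\bigl(f_n(\,\cdot\,,k_1,\ldots,k_m)\bigr),
\end{equation*}
which is exactly the expression obtained above for $-D^m_{k_1,\ldots,k_m}L^{-1}F$.

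The main obstacle is justifying the swap of the infinite sum and the integral, and clarifying the almost sure (rather than merely $L^2$) nature of the identity. I would handle this by working first in $L^2(\Omega)$: using orthogonality of the chaoses and the isometry \eqref{Isometry formula}, the $L^2$-norm of the partial sums of $P_t D^m_{k_1,\ldots,k_m}F$ is dominated uniformly in $t$ by a multiple of $\E[\|D^mF\|_{\ell^2(\N)^{\otimes m}}^2]$, which is finite because $F\in\mathrm{dom}(D^m)$; hence a Fubini-type argument in $L^2([0,\infty)\times\Omega,e^{-mt}dt\otimes P)$ justifies the interchange and yields equality in $L^2(\Omega)$ for fixed $(k_1,\ldots,k_m)$. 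Passing from $L^2$ to a $P$-almost sure statement is then done by selecting a subsequence of partial sums converging almost surely on both sides (using that the $L^2$-convergent series of orthogonal chaoses can be arranged to converge almost surely), which produces the stated pointwise identity.
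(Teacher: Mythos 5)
Your proof is correct and follows essentially the same route as the paper: expand both sides in chaos, observe that the per-chaos identity $\int_0^\infty ne^{-nt}\,dt=1$ makes the $n$-th terms agree, and then justify the interchange of the series and the integral using $F\in\mathrm{dom}(D^m)$. The paper handles the interchange by truncating to $F_d=\sum_{n=1}^d J_n(f_n)$, where the swap is trivial, and then estimating the $L^2$-norm of the remainder $R_{m,d}$ via Jensen's inequality, Fubini, and the isometry; you instead invoke a Fubini/dominated-convergence argument in $L^2([0,\infty)\times\Omega)$ directly. Both are legitimate; the paper's version is a bit more explicit about the quantitative error, but yours is cleaner to state. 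One small remark: your final step of ``passing from $L^2$ to $P$-a.s.\ by selecting subsequences'' is unnecessary --- once the two sides are identified as the same element of $L^2(\Omega)$, they agree $P$-almost surely by definition, which is precisely the sense in which the proposition asserts an almost-sure identity.
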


\begin{proof}
Since $F\in L^2(\Omega)$ is centred, there are kernels $f_n \in \ell_0^2(\N)^{\circ n}$, $n\in\N$, such that $F=\sum_{n=1}^{\infty}J_n(f_n)$. Fix $d\in\{m,m+1,\ldots\}$ and consider the truncated functional $F_d:=\sum_{n=1}^{d}J_n(f_n)$. Then,
\begin{align}\label{FiniteIntMehler}
-D_{k_1,\ldots,k_m}^mL^{-1}F_d &= \sum_{n=m}^d \frac{(n-1)!}{(n-m)!} J_{n-m}(f_n(\,\cdot\,,k_1,\ldots,k_m)) \notag\\
& = \int_0^\infty e^{-mt} \sum_{n=m}^d \frac{n!}{(n-m)!} e^{-(n-m)t} J_{n-m}(f_n(\,\cdot\,,k_1,\ldots,k_m)) \,dt\,,
\end{align}
where we used that $\int_0^\infty n e^{-nt}\,dt=1$.
By continuity of $D_{k_1,\ldots,k_m}^m$ and $L^{-1}$ one has that
$-D_{k_1,\ldots,k_m}^mL^{-1}F_d$ converges to $-D_{k_1,\ldots,k_m}^mL^{-1}F$
in $L^2(\Omega)$, as $d\to\infty$. To show that the right hand side of \eqref{FiniteIntMehler} converges to
$$\int_0^\infty e^{-mt}P_tD_{k_1,\ldots,k_m}^mF\, dt$$
in $L^2(\Omega)$, as $d \to \infty$, we consider the remainder term
\begin{align*}
R_{m,d} &:= \int_0^\infty e^{-mt}P_tD_{k_1,\ldots,k_m}^mF\, dt-(-D_{k_1,\ldots,k_m}^mL^{-1}F_d)\\
\end{align*}
and show that $\E[R_{m,d}^2]$ vanishes, as $d\to\infty$. First, use \eqref{FiniteIntMehler} to see that
$$R_{m,d}=\int_0^\infty e^{-mt} \sum_{n=d+1}^\infty {n!\over(n-m)!}e^{-(n-m)t}J_{n-m}(f_n(\,\cdot\,,k_1,\ldots,k_m))\, dt\,.$$
We then apply Jensen's inequality, Fubini's theorem and the isometry property of discrete multiple stochastic integrals to conclude that
\begin{align*}
&\E[R_{m,d}^2]\\
 &= \E\Big[\Big(\int_0^\infty e^{-mt} \sum_{n=d+1}^\infty {n!\over(n-m)!}e^{-(n-m)t}J_{n-m}(f_n(\,\cdot\,,k_1,\ldots,k_m))\, dt\Big)^2\Big]\\
&\leq  \E\Big[\int_0^\infty e^{-(2m-1)t} \Big(\sum_{n=d+1}^\infty {n!\over(n-m)!}e^{-(n-m)t}J_{n-m}(f_n(\,\cdot\,,k_1,\ldots,k_m))\Big)^2 dt\Big]\\
&= \int_0^\infty e^{-(2m-1)t}\sum_{n=d+1}^\infty \Big({n!\over(n-m)!}\Big)^2e^{-2(n-m)t}(n-m)!\lnormb{2}{(n-m)}{f_n(\,\cdot\,,k_1,\ldots,k_m)}^2\, dt\\
&\leq\sum_{n=d+1}^\infty \left({n!\over(n-m)!}\right)^2(n-m)!\,\lnormb{2}{(n-m)}{f_n(\,\cdot\,,k_1,\ldots,k_m)}^2\,,
\end{align*}
where we used that $\int_0^\infty e^{-(2n-1)t}\, dt=(2n-1)^{-1} \leq 1$. Since $F\in{\rm dom}(D^m)$, the latter expression is finite and converges to zero, as $d\to\infty$. This concludes the proof.
\end{proof}

Our next result combines the previous two propositions and is one of the key tools in the proof of our general Berry-Esseen bound in Section \ref{sec:AbstractBerryEsseen}. Similar relations also hold on the Wiener and the Poisson space for which we refer to \cite{NourdinPeccatiReinertSecondOrderPoincare} and \cite{LastPeccatiSchulte}, respectively. Although from a formal point of view the statement looks similar to these results, we emphasize that the proof as well as the meaning and the interpretation of the involved Malliavin operators in our discrete framework are different.

\begin{proposition}\label{OUBeGone}
For $m,k_1,\ldots,k_m\in\N$, $\alpha\geq 1$ and centred $F\in{\rm dom}(D^m)$ one has that $$\E[|D_{k_1,\ldots,k_m}^mL^{-1}F|^\alpha]\leq\E[|D_{k_1,\ldots,k_m}^mF|^\alpha]\,.$$
\end{proposition}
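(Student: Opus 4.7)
The starting point is the integral representation from Proposition \ref{IntMehler},
$$-D_{k_1,\ldots,k_m}^m L^{-1}F = \int_0^\infty e^{-mt}\,P_t D_{k_1,\ldots,k_m}^m F\,dt \qquad P\text{-a.s.},$$
combined with Mehler's formula $P_t G = \E[G(X^t)\mid X]$ from Proposition \ref{Mehler}. The plan is to apply Jensen's inequality for the convex function $x\mapsto|x|^\alpha$ in two stages: first to the integral in $t$ against a suitably normalised reference measure, and then to the conditional expectation built into $P_t$. The only bookkeeping point is that $e^{-mt}\,dt$ itself is not a probability measure on $[0,\infty)$, whereas $\mu(dt):=m\,e^{-mt}\,dt$ is, and the latter is what I would integrate against.

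Concretely, rewriting the integral representation as
$$-D_{k_1,\ldots,k_m}^m L^{-1}F = \frac{1}{m}\int_0^\infty P_t D_{k_1,\ldots,k_m}^m F\,\mu(dt)$$
and applying Jensen's inequality under $\mu$ yields
$$\bigl|D_{k_1,\ldots,k_m}^m L^{-1}F\bigr|^\alpha \leq \frac{1}{m^\alpha}\int_0^\infty \bigl|P_t D_{k_1,\ldots,k_m}^m F\bigr|^\alpha\,\mu(dt).$$
Next, Mehler's formula together with the conditional Jensen inequality gives
$$\bigl|P_t D_{k_1,\ldots,k_m}^m F\bigr|^\alpha = \bigl|\E[(D_{k_1,\ldots,k_m}^m F)(X^t)\mid X]\bigr|^\alpha \leq \E\bigl[\bigl|(D_{k_1,\ldots,k_m}^m F)(X^t)\bigr|^\alpha \bigm| X\bigr].$$
Taking full expectations, swapping the order of integration via Fubini, and using that $X^t\stackrel{d}{=}X$ (so that the law of $(D_{k_1,\ldots,k_m}^m F)(X^t)$ is independent of $t$), one arrives at
$$\E\bigl[|D_{k_1,\ldots,k_m}^m L^{-1}F|^\alpha\bigr] \leq \frac{1}{m^\alpha}\int_0^\infty \E\bigl[|D_{k_1,\ldots,k_m}^m F|^\alpha\bigr]\,\mu(dt) = \frac{1}{m^\alpha}\,\E\bigl[|D_{k_1,\ldots,k_m}^m F|^\alpha\bigr].$$
Since $m\geq 1$ and $\alpha\geq 1$, the prefactor $1/m^\alpha$ is at most $1$, which yields the claimed estimate (in fact a slightly stronger one).

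I do not foresee any serious obstacle: the argument is essentially the discrete counterpart of contractivity-type estimates on the Wiener and Poisson spaces and relies only on Propositions \ref{Mehler} and \ref{IntMehler}, which are already established. The one subtle point is the renormalisation of $e^{-mt}\,dt$ into the probability measure $\mu$ before invoking Jensen the first time, together with a standard Fubini argument, which is legitimate whenever $\E[|D_{k_1,\ldots,k_m}^m F|^\alpha]<\infty$ (the claim being trivial otherwise).
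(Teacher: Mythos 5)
Your argument is correct and follows essentially the same route as the paper's proof: insert the integral representation from Proposition \ref{IntMehler}, rewrite $P_t$ via Mehler's formula (Proposition \ref{Mehler}), and apply Jensen's inequality before invoking Fubini and stationarity of $X^t$. The only cosmetic difference is that you normalise $e^{-mt}\,dt$ into the probability measure $m\,e^{-mt}\,dt$ before applying Jensen, which produces the sharper constant $m^{-\alpha}$, whereas the paper applies Jensen directly for the sub-probability measure $e^{-mt}\,dt$ (legitimate since $|x|^\alpha$ is convex and vanishes at the origin) and ends up with the constant $m^{-1}$; both are $\leq 1$, so both yield the claimed bound.
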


\begin{proof}
According to Proposition \ref{IntMehler}, we have that
\begin{align*}
\E[|D^m_{k_1,\ldots,k_m}L^{-1}F|^\alpha] &= \E\Big[\Big|\int_0^\infty e^{-mt}P_tD_{k_1,\ldots,k_m}^mF\, dt\Big|^\alpha\Big]\,.
\end{align*}
Then, using Proposition \ref{Mehler} together with Jensen's inequality, we deduce that
\begin{align*}
\E\Big[\Big|\int_0^\infty e^{-mt}P_tD_{k_1,\ldots,k_m}^mF\, dt\Big|^\alpha\Big] &= 
\E\Big[\Big| \int_0^\infty e^{-mt} \E[D_{k_1,\ldots,k_m}^m F(X^t) \, \vert \, X] \,  dt \Big|^\alpha\Big]\\
&\leq \E\Big[\int_0^\infty e^{-mt} \E[\vert D_{k_1,\ldots,k_m}^m F(X^t) \vert^\alpha \, \vert \, X] \,  dt \Big]\\
&= \int_0^\infty e^{-mt} \E[\vert D_{k_1,\ldots,k_m}^m F \vert^\alpha] \,  dt\\
&\leq \E[|D_{k_1,\ldots,k_m}^mF|^\alpha]
\end{align*}
and complete the proof. 
\end{proof}

As a first  application of Proposition \ref{OUBeGone} we now deduce the following discrete Poincar\'e-type inequality. This result can already be found in \cite[Chapter 8]{Pri}, where it is proved by means of the Clark formula. We present an alternative proof without resorting to this formula.
\begin{proposition}\label{lem:PoincareUngleichung}
Suppose that $F\in{\rm dom}(D)$. Then
\begin{equation}\label{eq:rhspoincare}
\Var[F] \leq \E[\|DF\|_{\ell^2(\N)}^2]\,.
\end{equation}
\end{proposition}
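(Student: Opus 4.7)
The plan is to reduce the variance to an inner product via the integration-by-parts identity \eqref{eq:IntegrationByParts}, and then to cancel the occurrence of $L^{-1}$ using the semigroup contractivity afforded by Proposition \ref{OUBeGone}.

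First I would pass to the centred functional $\tilde F := F - \E[F]$. Since constants lie in the kernel of the discrete gradient, $D_k \tilde F = D_k F$ for every $k \in \N$, so $\E[\|D\tilde F\|_{\ell^2(\N)}^2] = \E[\|DF\|_{\ell^2(\N)}^2]$, and by definition $\Var[F] = \E[\tilde F^2]$. Also, writing the chaotic decomposition $F = \E[F] + \sum_{n=1}^\infty J_n(f_n)$, the assumption $F \in \mathrm{dom}(D)$ ensures that $\tilde F \in L^2(\Omega)$ and that $L^{-1}\tilde F = -\sum_{n=1}^\infty \frac{1}{n} J_n(f_n)$ is well defined and belongs to $\mathrm{dom}(D)$, so the integration-by-parts formula \eqref{eq:IntegrationByParts} applies.

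Applying \eqref{eq:IntegrationByParts} with the identity function $f(x) = x$ to the centred functional $\tilde F$ gives
\begin{equation*}
\Var[F] = \E[\tilde F \cdot \tilde F] = \E[\langle D \tilde F, -D L^{-1}\tilde F \rangle_{\ell^2(\N)}] = \E[\langle DF, -D L^{-1}\tilde F \rangle_{\ell^2(\N)}].
\end{equation*}
A pointwise Cauchy-Schwarz inequality in $\ell^2(\N)$ followed by a Cauchy-Schwarz inequality in $L^2(\Omega)$ bounds this by
\begin{equation*}
\Var[F] \leq \E[\|DF\|_{\ell^2(\N)}^2]^{1/2} \cdot \E[\|D L^{-1}\tilde F\|_{\ell^2(\N)}^2]^{1/2}.
\end{equation*}

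Finally I would invoke Proposition \ref{OUBeGone} with $m=1$ and $\alpha=2$, which yields $\E[|D_k L^{-1}\tilde F|^2] \leq \E[|D_k \tilde F|^2] = \E[|D_k F|^2]$ for every $k \in \N$. Summing over $k$ gives $\E[\|D L^{-1}\tilde F\|_{\ell^2(\N)}^2] \leq \E[\|DF\|_{\ell^2(\N)}^2]$, and substituting this into the previous bound produces \eqref{eq:rhspoincare}. There is no real obstacle beyond checking these summability conditions: the essential content is that the semigroup contractivity delivered by Mehler's formula (Proposition \ref{Mehler}) and Proposition \ref{IntMehler} is strong enough to neutralise the factor $1/n$ introduced by $L^{-1}$, which is precisely the role it plays in the analogous Wiener and Poisson Poincaré inequalities.
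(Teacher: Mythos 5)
Your proof is correct and follows essentially the same route as the paper: integration by parts via \eqref{eq:IntegrationByParts} with the identity function, Cauchy-Schwarz, and Proposition \ref{OUBeGone} with $m=1$, $\alpha=2$. The only cosmetic difference is that you apply Cauchy-Schwarz first in $\ell^2(\N)$ and then once in $L^2(\Omega)$, whereas the paper applies it termwise in $k$ (Cauchy-Schwarz in $L^2(\Omega)$ for each coordinate, then sums); after invoking Proposition \ref{OUBeGone} both variants collapse to the identical bound $\E[\|DF\|_{\ell^2(\N)}^2]$.
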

\begin{proof}
Choosing $f$ in \eqref{eq:IntegrationByParts} as the identity map on $\R$ yields
\begin{align*}
\Var[F] = \E[(F-\E[F])^2] &= \E[\langle D(F-\E[F]), -DL^{-1}(F-\E[F])\rangle_{\ell^2(\N)}]\\ &=\E\Big[\sum_{k=1}^\infty(D_k(F-\E[F]))(-D_kL^{-1}(F-\E[F]))\Big]\\
&\leq\E\Big[\sum_{k=1}^\infty|D_k(F-\E[F])|\,|D_kL^{-1}(F-\E[F])|\Big]\,.
\end{align*}
Exchanging expectation and summation, and using the Cauchy-Schwarz inequality, we see that the latter expression is further bounded by
$$\sum_{k=1}^\infty \big(\E[(D_k(F-\E[F]))^2]\big)^{1/2}\big(\E[(D_kL^{-1}(F-\E[F]))^2]\big)^{1/2}\,.$$ The proof is now concluded by applying Proposition \ref{OUBeGone} with $m=1$ and $\alpha=2$ and using the fact that $D_k(F-\E[F])=D_kF$.
\end{proof}

\begin{remark}\label{rem:pincareL1}
Proposition \ref{lem:PoincareUngleichung} remains valid for $F\in L^1(\Omega)\setminus {\rm dom}(D)$, since in this case the right hand side of \eqref{eq:rhspoincare} is infinite.
\end{remark}

\section{A general Berry-Esseen bound}\label{sec:AbstractBerryEsseen}

The main result of this section is a Berry-Esseen bound for square-integrable Rademacher functionals $F$. By such a result we mean an upper bound for the Kolmogorov distance between $F$ and a standard Gaussian random variable, where we recall that the Kolmogorov distance between two random variables $X$ and $Y$ is defined as $$d_K(X,Y):=\sup_{x\in\R}\big|P(X\leq x)-P(Y\leq x)\big|\,.$$
A first result in this direction has been shown by the authors in \cite{KroReiThae} in the special symmetric case that the sequence $p=(p_k)_{k\in\N}$ is constant and equal to $1/2$. In the present situation, we need the following generalization to arbitrary sequences $p$. Since the proof follows straightforwardly along the lines of that of Theorem 3.1 in \cite{KroReiThae}, we omit the arguments.

\begin{proposition}\label{AbstractBound}
Let $F \in {\rm dom}(D)$ with $\E[F] = 0$ and let $N\sim\mathcal{N}(0,1)$ be a standard Gaussian random variable. Then,
\begin{align*}
d_K(F,N) &\leq \E[|1-\langle DF,-DL^{-1}F\rangle_{\ell^2(\N)}|] + \frac{\sqrt{2\pi}}{8}\E[\langle (pq)^{-1/2} (DF)^2,|DL^{-1}F|\rangle_{\ell^2(\N)}]\\
&\quad +\frac{1}{2}\E[\langle(pq)^{-1/2}(DF)^2,|F\cdot DL^{-1}F|\rangle_{\ell^2(\N)}]\\
&\quad +\sup_{x\in\R}\E[\langle (pq)^{-1/2}(DF)D\1_{\{F>x\}},|DL^{-1}F|\rangle_{\ell^2(\N)}]\,.
\end{align*}
\end{proposition}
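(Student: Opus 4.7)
The plan is to adapt the Stein-Malliavin argument of Theorem 3.1 in \cite{KroReiThae} from the symmetric to the general Rademacher setting. For each $x\in\R$, let $f_x$ be the bounded solution of the Stein equation $f_x'(w) - wf_x(w) = \1_{\{w\leq x\}} - \Phi(x)$, with the classical estimates $\|f_x\|_\infty\leq\sqrt{2\pi}/4$ and $\|f_x'\|_\infty\leq 1$. Since $d_K(F,N) = \sup_{x\in\R}|\E[f_x'(F) - Ff_x(F)]|$, I would use the integration-by-parts identity \eqref{eq:IntegrationByParts} to rewrite $\E[Ff_x(F)] = \E[\langle Df_x(F), -DL^{-1}F\rangle_{\ell^2(\N)}]$, and then add and subtract $\E[f_x'(F)\langle DF, -DL^{-1}F\rangle_{\ell^2(\N)}]$ to obtain
$$\E[f_x'(F) - Ff_x(F)] = \E\bigl[f_x'(F)(1 - \langle DF, -DL^{-1}F\rangle_{\ell^2(\N)})\bigr] - \E\bigl[\langle Df_x(F) - f_x'(F)DF, -DL^{-1}F\rangle_{\ell^2(\N)}\bigr].$$
Estimating the first summand in absolute value by $\|f_x'\|_\infty\,\E[|1 - \langle DF, -DL^{-1}F\rangle_{\ell^2(\N)}|]\leq \E[|1 - \langle DF, -DL^{-1}F\rangle_{\ell^2(\N)}|]$ produces the first term of the asserted bound.

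For the second summand, using $F_k^+ - F_k^- = (p_kq_k)^{-1/2}D_kF$ I would expand
$$D_kf_x(F) - f_x'(F)D_kF = \sqrt{p_kq_k}\int_{F_k^-}^{F_k^+}(f_x'(s) - f_x'(F))\,ds,$$
and substitute the Stein equation $f_x'(w) = wf_x(w) + \1_{\{w\leq x\}} - \Phi(x)$, splitting the integrand into a smooth part $sf_x(s) - Ff_x(F)$ and a jump part $\1_{\{s\leq x\}} - \1_{\{F\leq x\}}$. The smooth part is handled via the algebraic decomposition $sf_x(s) - Ff_x(F) = F(f_x(s) - f_x(F)) + (s-F)f_x(s)$, combined with $|f_x(s)-f_x(F)|\leq |s-F|$, $|f_x(s)|\leq\sqrt{2\pi}/4$ and the estimate $\int_{F_k^-}^{F_k^+}|s-F|\,ds\leq(F_k^+-F_k^-)^2/2$, valid because $F$ coincides with one of $F_k^\pm$ and hence lies in the interval of integration. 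After multiplying by $\sqrt{p_kq_k}$ this contributes pointwise $\tfrac{1}{2}|F|(p_kq_k)^{-1/2}(D_kF)^2 + \tfrac{\sqrt{2\pi}}{8}(p_kq_k)^{-1/2}(D_kF)^2$, which after summation over $k$ and taking expectations reproduces the second and third terms of the bound.

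The main obstacle is the indicator jump, since $w\mapsto\1_{\{w\leq x\}}$ is not Lipschitz. I would resolve it via a direct case analysis on the position of $x$ relative to $F_k^\pm$ to establish the pointwise inequality
$$\Bigl|\int_{F_k^-}^{F_k^+}(\1_{\{s\leq x\}} - \1_{\{F\leq x\}})\,ds\Bigr| \leq |F_k^+-F_k^-|\cdot|\1_{\{F_k^+>x\}} - \1_{\{F_k^->x\}}|\,.$$
Multiplying by $\sqrt{p_kq_k}$ converts the right-hand side into $(p_kq_k)^{-1/2}|D_kF|\cdot|D_k\1_{\{F>x\}}|$, and the elementary sign identity $(D_kF)\,D_k\1_{\{F>x\}}\geq 0$, immediate from the definitions of the discrete gradient, allows both absolute values to be dropped, yielding exactly the supremum term of the statement. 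Assembling the four contributions and taking the supremum over $x\in\R$ completes the argument.
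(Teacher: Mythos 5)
Your proof is correct and is precisely the argument the paper has in mind: Proposition \ref{AbstractBound} is stated with the remark that the proof ``follows straightforwardly along the lines of that of Theorem 3.1 in \cite{KroReiThae},'' and your adaptation reproduces exactly that route — Stein's equation for the Kolmogorov distance, integration by parts via \eqref{eq:IntegrationByParts}, adding and subtracting $f_x'(F)\langle DF,-DL^{-1}F\rangle_{\ell^2(\N)}$, rewriting the remainder as $\sqrt{p_kq_k}\int_{F_k^-}^{F_k^+}(f_x'(s)-f_x'(F))\,ds$, and splitting into a Lipschitz part and an indicator part, the latter controlled pointwise and cleaned up with the sign identity $(D_kF)D_k\1_{\{F>x\}}\ge 0$. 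The constants $\sqrt{2\pi}/8$ and $1/2$ and the $(pq)^{-1/2}$-weights all emerge correctly from the endpoint observation $F\in\{F_k^-,F_k^+\}$ and the normalization $F_k^+-F_k^-=(p_kq_k)^{-1/2}D_kF$.
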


One disadvantage of the bound in Proposition \ref{AbstractBound} is that it involves the inverse of the discrete Ornstein-Uhlenbeck operator. In applications this means that the chaotic decomposition of the Rademacher functional $F$ has to be computed explicitly in order to evaluate the expression $-DL^{-1}F$. A further analysis of the bound then requires a multiplication formula for discrete multiple stochastic integrals, which expresses a product of two discrete multiple stochastic integrals as linear combination of discrete multiple stochastic integrals. 
We transfer the bound of Proposition \ref{AbstractBound} into a form, which can be evaluated without using a multiplication formula. Our next result is a combination of Proposition \ref{AbstractBound} and Proposition \ref{OUBeGone}, and provides an upper bound for $d_K(F,N)$ in terms of the first- and second-order discrete gradient only. A result of this structure is what is called a `second-order Poincar\'e inequality' in the literature, see \cite{ChatterjeeSecondOrderPoincare,LastPeccatiSchulte,NourdinPeccatiReinertSecondOrderPoincare}. The discrete Poincar\'e-type inequality in Proposition \ref{lem:PoincareUngleichung} says that a Rademacher functional $F$ is concentrated around $\E[F]$ in terms of the variance if the contribution of the first-order discrete gradient is small. Our discrete second-order Poincar\'e inequality additionally implies that if the contribution of the second-order discrete gradient is also small, then $F$ is close to a standard Gaussian random variable.

\begin{theorem}\label{Folgetheorem}
Let $F\in\text{\rm dom}(D)$ with $\E[F]=0$ and $\E[F^2]=1$, and let $N\sim\mathcal{N}(0,1)$ be a standard Gaussian random variable. Further, fix $r,s,t \in (1,\infty)$ such that $\frac{1}{r}+\frac{1}{s}+\frac{1}{t}=1$. Then,
\begin{align*}
d_K(F,N) &\leq \Big( \frac{15}{4} \sum_{j,k,\ell = 1}^\infty (\E[(D_jF)^2(D_kF)^2])^{1/2}(\E[(D_\ell D_jF)^2(D_\ell D_kF)^2])^{1/2} \Big)^{1/2}\\
&\quad + \Big( \frac{3}{4} \sum_{j,k,\ell = 1}^\infty \frac{1}{p_\ell q_\ell} \E[(D_\ell D_jF)^2(D_\ell D_kF)^2] \Big)^{1/2} + \frac{\sqrt{2\pi}}{8} \sum_{k=1}^\infty \frac{1}{\sqrt{p_k q_k}} \E[ \vert D_kF \vert^3]\\
&\quad + \frac{1}{2} (\E[\vert F \vert^r])^{1/r} \sum_{k=1}^\infty \frac{1}{\sqrt{p_k q_k}} (\E[|D_kF|^{2s}])^{1/s} (\E[\vert D_kF \vert^{t}])^{1/t}\\
&\quad + \Big( \sum_{k=1}^\infty \frac{1}{p_k q_k} \E[(D_kF)^4] \Big)^{1/2} + \Big( 6 \sum_{k,\ell = 1}^\infty \frac{1}{p_k q_k} (\E[(D_kF)^4])^{1/2} (\E[(D_\ell D_kF)^4])^{1/2} \Big)^{1/2}\\
&\quad + \Big( 3 \sum_{k,\ell=1}^\infty \frac{1}{p_k q_k p_\ell q_\ell} \E[(D_\ell D_k F)^4)] \Big)^{1/2}\,.
\end{align*}
\end{theorem}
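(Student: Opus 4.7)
The starting point is Proposition~\ref{AbstractBound}, which splits $d_K(F,N)$ into four error terms involving both $DF$ and the inverse Ornstein--Uhlenbeck quantity $-DL^{-1}F$. The overall idea is to estimate each of these four terms separately, and in every case to eliminate the operator $L^{-1}$ via the contraction property from Proposition~\ref{OUBeGone}, so that only expectations of $DF$ and $D^2F$ survive in the final estimate.

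For the first term, observe that \eqref{eq:IntegrationByParts} applied to $f(x)=x$ gives $\E[\langle DF,-DL^{-1}F\rangle_{\ell^2(\N)}]=\E[F^2]=1$, so Jensen's inequality yields
\[
\E\bigl[|1-\langle DF,-DL^{-1}F\rangle_{\ell^2(\N)}|\bigr]\leq\bigl(\Var[\langle DF,-DL^{-1}F\rangle_{\ell^2(\N)}]\bigr)^{1/2}.
\]
Apply the discrete Poincar\'e inequality (Proposition~\ref{lem:PoincareUngleichung}) to the random variable $\langle DF,-DL^{-1}F\rangle_{\ell^2(\N)}$ and compute $D_\ell$ of each summand $(D_jF)(-D_jL^{-1}F)$ via the product formula \eqref{eq:ProduktFormel}; this produces three contributions carrying the factors $(D_\ell D_jF)(-D_jL^{-1}F)$, $(D_jF)(-D_\ell D_jL^{-1}F)$ and $(X_\ell/\sqrt{p_\ell q_\ell})(D_\ell D_jF)(-D_\ell D_jL^{-1}F)$. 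Square, bound $(a+b+c)^2\leq 3(a^2+b^2+c^2)$, expand each resulting square as a double sum over $(j,k)$, apply Cauchy--Schwarz to every joint expectation by pairing the $DF,D^2F$-factors together and the $DL^{-1}F,D^2L^{-1}F$-factors together, and finally invoke Proposition~\ref{OUBeGone} (with $m\in\{1,2\}$ and $\alpha\in\{2,4\}$) to eliminate all $L^{-1}$-terms. Keeping track of the combinatorics delivers the first two summands of the final bound, with constants $15/4$ and $3/4$.

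The second term is handled by H\"older's inequality with conjugate exponents $3/2,3$ followed by Proposition~\ref{OUBeGone} with $m=1,\alpha=3$,
\[
\E[(D_kF)^2|D_kL^{-1}F|]\leq\bigl(\E[|D_kF|^3]\bigr)^{2/3}\bigl(\E[|D_kL^{-1}F|^3]\bigr)^{1/3}\leq\E[|D_kF|^3],
\]
which after summation in $k$ produces the third summand; the third term is analogous, using three-factor H\"older with the given exponents $r,s,t$ and Proposition~\ref{OUBeGone} with $\alpha=t$. The fourth (indicator) term requires Proposition~\ref{Indicator adjointness lemma}. Set $u_k:=(p_kq_k)^{-1/2}(D_kF)|D_kL^{-1}F|$; a case analysis on the positions of $F_k^+,F_k^-$ relative to $x$ shows that $D_k\1_{\{F>x\}}$ either vanishes or has the same sign as $D_kF$, so that $(D_k\1_{\{F>x\}})u_k\geq 0$ $P$-a.s. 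Proposition~\ref{Indicator adjointness lemma} then converts the sum into $\E[\1_{\{F>x\}}\delta(u)]$, which is bounded by $(\E[\delta(u)^2])^{1/2}$ via $|\1_{\{F>x\}}|\leq 1$ and Cauchy--Schwarz. The Skorohod isometry \eqref{eq:SkorohodIsometry} decomposes $\E[\delta(u)^2]$ into the $\ell^2$-norm part, bounded by $\sum_k(p_kq_k)^{-1}\E[(D_kF)^4]$ through Cauchy--Schwarz and Proposition~\ref{OUBeGone} with $\alpha=4$ (giving the fifth summand), and the mixed-gradient part $\E[\sum_{k,\ell}(D_ku_\ell)(D_\ell u_k)]$; for the latter, $D_ku_\ell$ is expanded using \eqref{eq:ProduktFormel}, the elementary bound $|D_k(|D_\ell L^{-1}F|)|\leq|D_kD_\ell L^{-1}F|$ disposes of the inner absolute value, and Cauchy--Schwarz together with another application of Proposition~\ref{OUBeGone} with $\alpha=4$ produces the sixth and seventh summands with constants $6$ and $3$.

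The delicate point is in the first term: one must group the factors in the expanded squares under Cauchy--Schwarz in precisely the right way so that Proposition~\ref{OUBeGone} yields the \emph{joint} fourth moments $\E[(D_\ell D_jF)^2(D_\ell D_kF)^2]$ actually appearing in the statement, rather than the coarser decoupled products $(\E[(D_\ell D_jF)^4])^{1/2}(\E[(D_\ell D_kF)^4])^{1/2}$ that a naive pairing would produce. A secondary care point is the sign verification required to legitimately apply Proposition~\ref{Indicator adjointness lemma} to the non-smooth indicator $\1_{\{F>x\}}$.
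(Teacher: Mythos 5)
Your handling of the last three terms of Proposition~\ref{AbstractBound} matches the paper's proof: H\"older plus Proposition~\ref{OUBeGone} for the second and third summands; Proposition~\ref{Indicator adjointness lemma}, Cauchy--Schwarz, the Skorohod isometry, and the product formula for the indicator term, with the same sign observation needed to invoke Proposition~\ref{Indicator adjointness lemma}. Those steps are sound.

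The gap is in your treatment of the first summand, and you have in fact flagged the exact place where it lives without resolving it. After the Poincar\'e inequality and the product formula you arrive at triple sums of expectations of the type
\[
\E\bigl[\,|D_\ell D_jF|\,|D_jL^{-1}F|\,|D_\ell D_kF|\,|D_kL^{-1}F|\,\bigr]\,,
\]
and your plan is to pair $|D_\ell D_jF||D_\ell D_kF|$ against $|D_jL^{-1}F||D_kL^{-1}F|$ via Cauchy--Schwarz and then ``invoke Proposition~\ref{OUBeGone} to eliminate all $L^{-1}$-terms.'' But that leaves you with the \emph{joint} quantity $\E[(D_jL^{-1}F)^2(D_kL^{-1}F)^2]$, and Proposition~\ref{OUBeGone} is a statement about a single moment $\E[|D^m_{k_1,\ldots,k_m}L^{-1}F|^\alpha]$ at one multi-index: it has no mechanism to compare a joint moment of $DL^{-1}F$ at two distinct indices $j\neq k$ to the corresponding joint moment of $DF$. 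The only way to make Proposition~\ref{OUBeGone} applicable from this point is to decouple once more, $\E[(D_jL^{-1}F)^2(D_kL^{-1}F)^2]\leq(\E[(D_jL^{-1}F)^4])^{1/2}(\E[(D_kL^{-1}F)^4])^{1/2}$, which then yields the factored products $(\E[(D_jF)^4])^{1/2}(\E[(D_kF)^4])^{1/2}$ --- precisely the ``coarser decoupled products'' you correctly identify as \emph{not} being what the theorem asserts. So there is no grouping under Cauchy--Schwarz for which Proposition~\ref{OUBeGone} delivers the stated joint moments; the issue is not combinatorial care but that the tool itself is too weak here.

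What the paper actually does for this term is to bypass Proposition~\ref{OUBeGone} and go back to the underlying representation: write $-D_kL^{-1}F=\int_0^\infty e^{-t}P_tD_kF\,dt=\int_0^\infty e^{-t}\E[D_kF(X^t)\mid X]\,dt$ via Propositions~\ref{IntMehler} and~\ref{Mehler}, pull the integral and the conditional expectation outside the $k$-sum, and crucially square \emph{before} splitting into products of separate $dt$-integrals. This keeps the two factors evaluated at the \emph{same} perturbation time $t$ and inside the \emph{same} conditional expectation, so after a Jensen step and Fubini the inner integral collapses to $\int_0^\infty e^{-t}\E[(D_jF(X^t))^2(D_kF(X^t))^2]\,dt=\E[(D_jF)^2(D_kF)^2]$, producing exactly the joint moments in the statement. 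The analogous manipulation with $m=2$ gives the $\E[(D_\ell D_jF)^2(D_\ell D_kF)^2]$ factor and the constant $3/4$ in the second summand. Without this return to Mehler's formula, the first two terms of the bound cannot be obtained in the stated form.
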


Let us comment on the second-order Poincar\'e inequality in Theorem \ref{Folgetheorem}. Its form differs from that in the Wiener or Poisson case treated in \cite{LastPeccatiSchulte,NourdinPeccatiReinertSecondOrderPoincare}. The main difference is the fourth term, which involves the parameters $r,s$ and $t$, and hence higher moments of $F$ and $D_kF$. In many applications one can choose $r=2$ and $s=t=4$, but there are situations in which the additional flexibility to choose $r,s$ and $t$ differently turns out to be crucial. We shall meet such an example in the proof of Theorem \ref{thm:ERGraph} on the triangle counting statistic in the Erd\H{o}s-Renyi random graph.

\begin{proof}[Proof of Theorem \ref{Folgetheorem}]
Our proof follows the general scheme to establish a second-order Poin\-car\'e inequality, which is used in the literature \cite{ChatterjeeSecondOrderPoincare,LastPeccatiSchulte,NourdinPeccatiReinertSecondOrderPoincare}. Namely, we build on Proposition \ref{AbstractBound} by further estimating each summand of the bound there. We start with the first summand, to which we apply the Cauchy-Schwarz inequality:
\begin{align*}
\E[|1-\langle DF,-DL^{-1}F\rangle_{\ell^2(\N)}|] & \leq (\E[(1-\langle DF,-DL^{-1}F\rangle_{\ell^2(\N)})^2])^{1/2}.
\end{align*}
Taking $f$ as the identity on $\R$ in \eqref{eq:IntegrationByParts} shows that $\E[\langle DF,-DL^{-1}F\rangle_{\ell^2(\N)}]=\Var[F]=1$. Thus, $\E[(1-\langle DF,-DL^{-1}F\rangle_{\ell^2(\N)})^2] = \Var[\langle DF,-DL^{-1}F\rangle_{\ell^2(\N)}]$ and an application of Proposition  \ref{lem:PoincareUngleichung} (see also Remark \ref{rem:pincareL1}) yields
\begin{align}\label{GradientSummation}
\E[(1-\langle DF,-DL^{-1}F\rangle_{\ell^2(\N)})^2] &\leq \E[\|D(\langle DF,-DL^{-1}F\rangle_{\ell^2(\N)})\|_{\ell^2(\N)}^2]\notag\\
&= \E\Big[\sum_{\ell=1}^\infty \Big(D_\ell\Big(\sum_{k=1}^\infty (D_kF)(-D_kL^{-1}F)\Big)\Big)^2 \Big]\notag \\
&= \E\Big[\sum_{\ell=1}^\infty \Big(\sum_{k=1}^\infty D_\ell ((D_kF)(-D_kL^{-1}F))\Big)^2 \Big]\,,
\end{align}
where the exchange of $D_\ell$ with the summation in the last step can be justified as follows.
Since $\E[\langle DF, -DL^{-1}F \rangle_{\ell^2(\N)}]=1$, $\langle DF, -DL^{-1}F \rangle_{\ell^2(\N)}$ is $P$-a.s.\ finite. Thus, $\langle DF_\ell^{\pm}, -DL^{-1}F_\ell^{\pm} \rangle_{\ell^2(\N)}$ as well as the path-wise representation of $D_\ell(\langle DF, -DL^{-1}F \rangle_{\ell^2(\N)})$ as at \eqref{eq:PathWiseDifferenceOperator} are $P$-a.s.\ finite for $\ell \in \N$. As a consequence, we see that
\begin{align*}
D_\ell(\langle DF, -DL^{-1}F \rangle_{\ell^2(\N)})&= \sqrt{p_\ell q_\ell} (\langle DF_\ell^+, -DL^{-1}F_\ell^+ \rangle_{\ell^2(\N)} - \langle DF_\ell^-, -DL^{-1}F_\ell^- \rangle_{\ell^2(\N)})\\
&= \sqrt{p_\ell q_\ell} \Big(\sum_{k=1}^\infty (D_kF_\ell^+)(-D_kL^{-1}F_\ell^+) - \sum_{k=1}^\infty (D_kF_\ell^-)(-D_kL^{-1}F_\ell^-) \Big)\\
&= \sqrt{p_\ell q_\ell} \sum_{k=1}^\infty ((D_kF_\ell^+)(-D_kL^{-1}F_\ell^+) - (D_kF_\ell^-)(-D_kL^{-1}F_\ell^-))\\
&= \sum_{k=1}^\infty D_\ell((D_kF)(-D_kL^{-1}F))\qquad P\text{-a.s.}
\end{align*}
for $\ell \in \N$.
Now, we further estimate the quantity $D_\ell ((D_kF)(-D_kL^{-1}F))$ in \eqref{GradientSummation} using the product formula \eqref{eq:ProduktFormel}. This yields
\begin{align*}
&|D_\ell\big((D_kF)(-D_kL^{-1}F)\big)|\\
&= |(D_{\ell}D_kF)(-D_kL^{-1}F)+(D_kF)(-D_\ell D_kL^{-1}F)-{X_\ell\over\sqrt{p_\ell q_\ell}}(D_\ell D_kF)(-D_\ell D_kL^{-1}F)|\\
&\leq |(D_{\ell}D_kF)(-D_kL^{-1}F)|+|(D_kF)(-D_\ell D_kL^{-1}F)|+{1\over\sqrt{p_\ell q_\ell}}|(D_\ell D_kF)(-D_\ell D_kL^{-1}F)|\,.
\end{align*}

Using this together with the Cauchy-Schwarz inequality, it follows from \eqref{GradientSummation} that
\begin{align}\label{T_1,T_2,T_3}
\E[(1-\langle DF, -DL^{-1}F \rangle_{\ell^2(\N)})^2] \leq 3(T_1+T_2+T_3)\,,
\end{align}
where $T_1$, $T_2$ and $T_3$ are given by
\begin{align*}
T_1 &:= \E\Big[\sum_{\ell=1}^\infty \Big(\sum_{k=1}^\infty |(D_{\ell}D_kF)(-D_kL^{-1}F)|\Big)^2 \Big]\,,\\
T_2 &:= \E\Big[\sum_{\ell=1}^\infty \Big(\sum_{k=1}^\infty |(D_kF)(-D_\ell D_kL^{-1}F)|\Big)^2 \Big]\,,\\
T_3 &:= \E\Big[\sum_{\ell=1}^\infty {1\over p_\ell q_\ell}\Big(\sum_{k=1}^\infty |(D_\ell D_kF)(-D_\ell D_kL^{-1}F)|\Big)^2 \Big]\,.
\end{align*}
Each of these terms is now further estimated from above. Considering $T_1$, an application of Proposition \ref{IntMehler} and Proposition \ref{Mehler} as well as Jensen's inequality yields for $\ell \in \N$ that
\begin{align*}
\Big(\sum_{k=1}^\infty |D_{\ell}D_kF| \, |D_kL^{-1}F|\Big)^2&= \Big(\sum_{k=1}^\infty |D_{\ell}D_kF| \, \Big| \int_0^\infty e^{-t}P_tD_kF \,  dt \Big| \Big)^2\\
&= \Big(\sum_{k=1}^\infty |D_{\ell}D_kF| \, \Big| \int_0^\infty e^{-t}\E[D_kF(X^t) \, \vert \, X] \, d t \Big| \Big)^2\\
&\leq \Big(\sum_{k=1}^\infty |D_{\ell}D_kF| \int_0^\infty e^{-t}\E[|D_kF(X^t)| \, \vert \, X] \,  dt \Big)^2\,.
\end{align*}
By virtue of the monotone convergence theorem, we get for $\ell \in \N$ that
\begin{align*}
&\Big(\sum_{k=1}^\infty |D_{\ell}D_kF| \int_0^\infty e^{-t}\E[|D_kF(X^t)| \, \vert \, X] \, dt \Big)^2\\
&= \Big(\int_0^\infty e^{-t} \sum_{k=1}^\infty |D_{\ell}D_kF| \E[|D_kF(X^t)| \, \vert \, X] \,  dt \Big)^2\\
&= \Big(\int_0^\infty e^{-t} \E\Big[ \sum_{k=1}^\infty |D_{\ell}D_kF| \, |D_kF(X^t)| \, \Big| \, X\Big] \,  dt \Big)^2\,.
\end{align*}
Using Jensen's inequality again as well as the Cauchy-Schwarz inequality, we now conclude for $\ell \in \N$ that
\begin{align*}
&\Big(\int_0^\infty e^{-t} \E\Big[ \sum_{k=1}^\infty |D_{\ell}D_kF| \, |D_kF(X^t)| \, \Big| \, X\Big] \,  dt \Big)^2\\
&\leq \int_0^\infty e^{-t} \E\Big[ \Big( \sum_{k=1}^\infty |D_{\ell}D_kF| \, |D_kF(X^t)| \Big)^2 \, \Big| \, X\Big] \,  dt\\
&= \int_0^\infty e^{-t} \E\Big[ \sum_{j,k=1}^\infty |D_{\ell}D_jF| \, |D_jF(X^t)| \, |D_{\ell}D_kF| \, |D_kF(X^t)| \, \Big| \, X\Big] \,  dt\\
&= \sum_{j,k=1}^\infty |D_{\ell}D_jF| \, |D_{\ell}D_kF| \int_0^\infty e^{-t} \E[ |D_jF(X^t)| \, |D_kF(X^t)| \, | \, X] \,  dt\\
&\leq \sum_{j,k=1}^\infty |D_{\ell}D_jF| \, |D_{\ell}D_kF| \int_0^\infty e^{-t} (\E[ (D_jF(X^t))^2 (D_kF(X^t))^2 \, | \, X])^{1/2} \,  dt\\
&\leq \sum_{j,k=1}^\infty |D_{\ell}D_jF| \, |D_{\ell}D_kF| \Big( \int_0^\infty e^{-t} \E[ (D_jF(X^t))^2 (D_kF(X^t))^2 \, | \, X] \,  dt \Big)^{1/2}\,.
\end{align*}
Thus, another application of the Cauchy-Schwarz inequality leads to the bound
\begin{align}\label{T_1}
T_1 &\leq \E \Big[ \sum_{j,k,\ell=1}^\infty |D_{\ell}D_jF| \, |D_{\ell}D_kF| \Big( \int_0^\infty e^{-t} \E[ (D_jF(X^t))^2 (D_kF(X^t))^2 \, | \, X] \,  dt \Big)^{1/2} \Big] \notag\\
&\leq \sum_{j,k,\ell=1}^\infty (\E[(D_{\ell}D_jF)^2(D_{\ell}D_kF)^2])^{1/2} \Big( \E \Big[ \int_0^\infty e^{-t} \E[ (D_jF(X^t))^2 (D_kF(X^t))^2 \, | \, X] \,  dt \Big] \Big)^{1/2} \notag\\
&= \sum_{j,k,\ell=1}^\infty (\E[(D_{\ell}D_jF)^2(D_{\ell}D_kF)^2])^{1/2} \Big( \int_0^\infty e^{-t} \E[(D_jF)^2(D_kF)^2] \,  dt \Big)^{1/2} \notag\\
&= \sum_{j,k,\ell=1}^\infty (\E[(D_{\ell}D_jF)^2(D_{\ell}D_kF)^2])^{1/2}(\E[(D_jF)^2(D_kF)^2])^{1/2}\,.
\end{align}
Using similar arguments and Proposition \ref{IntMehler} for $m=2$, one shows that
\begin{align}\label{T_2}
T_2 \leq \frac{1}{4} \sum_{j,k,\ell=1}^\infty (\E[(D_jF)^2(D_kF)^2])^{1/2}(\E[(D_{\ell}D_jF)^2(D_{\ell}D_kF)^2])^{1/2}
\end{align}
and
\begin{align}\label{T_3}
T_3 \leq \frac{1}{4} \sum_{j,k,\ell=1}^\infty \frac{1}{p_\ell q_\ell} \E[(D_{\ell}D_jF)^2(D_{\ell}D_kF)^2]\,.
\end{align}
Thus, combining \eqref{T_1}, \eqref{T_2} and \eqref{T_3} with \eqref{T_1,T_2,T_3}, we get
\begin{align}\label{FirstSummand}
&\E[|1-\langle DF,-DL^{-1}F\rangle_{\ell^2(\N)}|] \notag\\
&\leq \Big( \frac{15}{4} \sum_{j,k,\ell = 1}^\infty (\E[(D_jF)^2(D_kF)^2])^{1/2}(\E[(D_\ell D_jF)^2(D_\ell D_kF)^2])^{1/2} \Big)^{1/2} \notag\\
&\quad + \Big( \frac{3}{4} \sum_{j,k,\ell = 1}^\infty \frac{1}{p_\ell q_\ell} \E[(D_\ell D_jF)^2(D_\ell D_kF)^2] \Big)^{1/2}
\end{align}
as an estimate for the first summand of the bound in Proposition \ref{AbstractBound}. 

For the second summand we obtain
\begin{align}
\E[\langle (pq)^{-1/2} (DF)^2,|DL^{-1}F|\rangle_{\ell^2(\N)}] &= \sum_{k=1}^\infty (p_k q_k)^{-1/2} \E[(D_kF)^2 \, |D_kL^{-1}F|]\notag\\
&\leq \sum_{k=1}^\infty (p_k q_k)^{-1/2} (\E[|D_kF|^3])^{2/3} (\E[|D_kL^{-1}F|^3])^{1/3}\notag\\
&\leq \sum_{k=1}^\infty (p_k q_k)^{-1/2} \E[|D_kF|^3]
\end{align}
by means of H\"older's inequality with H\"older conjugates $3$ and $3/2$, and Proposition \ref{OUBeGone}. Applying a generalization of H\"older's inequality with H\"older conjugates $r,s,t \in (1,\infty)$ with $\frac{1}{r}+\frac{1}{s}+\frac{1}{t}=1$ as well as Proposition \ref{OUBeGone} to the third summand of the bound in Proposition \ref{AbstractBound} immediately yields
\begin{align}
&\E[\langle(pq)^{-1/2}(DF)^2,|F\cdot DL^{-1}F|\rangle_{\ell^2(\N)}]\notag\\
&= \sum_{k=1}^\infty (p_k q_k)^{-1/2} \E[|F| \, (D_kF)^2 \, |D_kL^{-1}F|]\notag\\
&\leq (\E[|F|^r])^{1/r} \sum_{k=1}^\infty (p_k q_k)^{-1/2} (\E[|D_kF|^{2s}])^{1/s} (\E[|D_kL^{-1}F|^t])^{1/t}\notag\\
&\leq (\E[|F|^r])^{1/r} \sum_{k=1}^\infty (p_k q_k)^{-1/2} (\E[|D_kF|^{2s}])^{1/s} (\E[|D_kF|^t])^{1/t}\,.
\end{align}

We now apply the integration-by-parts-formula \eqref{Indicator adjointness} in order to bound the last term in Proposition \ref{AbstractBound}. To this end we note that $D_k\1_{\{F>x\}}D_kF|D_kL^{-1}F|\geq 0$ for every $k\in\N$ and we need to verify the summability condition in \eqref{Indicator adjointness u} in Proposition \ref{Indicator adjointness lemma}. The latter will be verified subsequent to the following calculation. Using the integration-by-parts-formula we obtain that
\begin{align}\label{IndicatorBeGone}
\E[\langle (pq)^{-1/2}(DF)D\1_{\{F>x\}},|DL^{-1}F| \rangle_{\ell^2(\N)}]&= \E[\langle D\1_{\{F>x\}}, (pq)^{-1/2}(DF) |DL^{-1}F| \rangle_{\ell^2(\N)}]\notag\\
&= \E[\1_{\{F>x\}} \delta((pq)^{-1/2}(DF) |DL^{-1}F|)]\notag\\
&\leq \E[|\delta((pq)^{-1/2}(DF) |DL^{-1}F|)|]\notag\\
&\leq (\E[(\delta((pq)^{-1/2}(DF) |DL^{-1}F|))^2])^{1/2}\,.
\end{align}
From the isometric formula \eqref{eq:SkorohodIsometry} for the divergence operator it follows that
\begin{align}\label{T_4,T_5}
&\E[(\delta((pq)^{-1/2}(DF) |DL^{-1}F|))^2]\notag\\
&= \E[\| (pq)^{-1/2}(DF) (DL^{-1}F) \|_{\ell^2(\N)}^2]\notag\\
&\quad + \E\Big[ \sum_{k,\ell=1}^\infty (p_\ell q_\ell)^{-1/2} D_k((D_\ell F) |D_\ell L^{-1}F|) \cdot (p_k q_k)^{-1/2} D_\ell((D_k F) |D_k L^{-1}F|) \Big]\notag\\
&\leq \E[\| (pq)^{-1/2}(DF) (DL^{-1}F) \|_{\ell^2(\N)}^2]+ \E\Big[ \sum_{k,\ell=1}^\infty (p_k q_k)^{-1} (D_\ell((D_k F) (D_k L^{-1}F)))^2 \Big]\notag\\
&=: T_4 + T_5\,.
\end{align}
The term $T_4$ can easily be estimated by means of the Cauchy-Schwarz inequality and Proposition \ref{OUBeGone}, which yields that
\begin{align}\label{T_4}
T_4 &= \sum_{k=1}^\infty (p_k q_k)^{-1} \E[(D_kF)^2 (D_kL^{-1}F)^2]
\leq \sum_{k=1}^\infty (p_k q_k)^{-1} (\E[(D_kF)^4])^{1/2} (\E[(D_kL^{-1}F)^4])^{1/2}\notag\\
&\leq \sum_{k=1}^\infty (p_k q_k)^{-1} \E[(D_kF)^4]\,.
\end{align}
To handle $T_5$, we first compute $\E[(D_\ell((D_k F) (D_k L^{-1}F)))^2]$ by using the product formula \eqref{eq:ProduktFormel}, the Cauchy-Schwarz inequality as well as Proposition \ref{OUBeGone}. This leads to
\begin{align}\label{T_5}
&\E[(D_\ell((D_k F) (D_k L^{-1}F)))^2]\notag\\
&= \E[((D_\ell D_kF)(D_k L^{-1}F) + (D_kF)(D_\ell D_k L^{-1}F) - (p_\ell q_\ell)^{-1/2}X_\ell (D_\ell D_kF)(D_\ell D_k L^{-1}F))^2]\notag\\
&\leq 3\E[(D_\ell D_kF)^2(D_k L^{-1}F)^2] + 3\E[(D_kF)^2(D_\ell D_k L^{-1}F)^2]\notag\\
&\quad +3 \, (p_\ell q_\ell)^{-1}\E[(D_\ell D_kF)^2(D_\ell D_k L^{-1}F)^2]\notag\\
&\leq 3 \, (\E[(D_\ell D_kF)^4])^{1/2}(\E[(D_k L^{-1}F)^4])^{1/2} + 3 \, (\E[(D_kF)^4])^{1/2}(\E[(D_\ell D_k L^{-1}F)^4])^{1/2}\notag\\
&\quad +3 \, (p_\ell q_\ell)^{-1}(\E[(D_\ell D_kF)^4])^{1/2}(\E[(D_\ell D_k L^{-1}F)^4])^{1/2}\notag\\
&\leq 6 \, (\E[(D_kF)^4])^{1/2}(\E[(D_\ell D_k F)^4])^{1/2} + 3 \, (p_\ell q_\ell)^{-1}\E[(D_\ell D_kF)^4]\,.
\end{align}
We now justify the validity of the summability condition \eqref{Indicator adjointness u}. Assume that 
\begin{equation}
\label{finiteDoubleGrad}
\E\bigg[\sum_{k,\ell=1}^{\infty}(D_{k}u_\ell)^2\bigg]<\infty\,,
\end{equation}
where $u_\ell:=(p_\ell q_\ell)^{-1/2}\,D_\ell F|D_\ell L^{-1}F|=\sum_{n=1}^{\infty}J_{n}(g_{n+1}(\,\cdot\,,\ell))$. Then we obtain that
\begin{align*}
\E\bigg[\sum_{k,\ell=1}^{\infty}(D_{k}u_\ell)^2\bigg]&=\sum_{k,\ell=1}^{\infty}\E\big[(D_ku_\ell)^2\big]\\
&=\sum_{k,\ell=1}^{\infty}\sum_{n=1}^{\infty}n\, n!\lnorm{2}{n-1}{g_{n+1}(\,\cdot\,,k,\ell)}^2\\
&=\sum_{n=1}^{\infty}n\, n!\lnorm{2}{n+1}{g_{n+1}}^2,
\end{align*}
which implies that 
\[\sum_{n=1}^\infty(n+1)\, n!\lnorm{2}{n+1}{g_{n+1}}^2\leq 2\,\E\Big[\sum_{k,\ell=1}^{\infty}(D_{k}u_\ell)^2\Big]<\infty\,.\]
Thus, the summability condition \eqref{Indicator adjointness u} is verified, once condition \eqref{finiteDoubleGrad} is satisfied. Since $T_5=\E\big[\sum_{k,\ell=1}^\infty(D_{k}u_\ell)^2\big]$, condition \eqref{finiteDoubleGrad} is verified, once our error bound is finite. Otherwise, the error bound holds trivially.
Combining \eqref{IndicatorBeGone}, \eqref{T_4,T_5}, \eqref{T_4} and \eqref{T_5} yields
\begin{align}
&\sup_{x \in \R} \E[\langle (pq)^{-1/2}(DF)D\1_{\{F>x\}},|DL^{-1}F| \rangle_{\ell^2(\N)}]\notag\\
&\leq \Big( \sum_{k=1}^\infty (p_k q_k)^{-1} \E[(D_kF)^4] \Big)^{1/2} +	\Big( 6 \sum_{k,\ell=1}^\infty (p_k q_k)^{-1} (\E[(D_kF)^4])^{1/2} (\E[(D_\ell D_kF)^4])^{1/2} \Big] \Big)^{1/2}\notag\\
&\quad + \Big( 3 \sum_{k,\ell=1}^\infty \frac{1}{p_k q_k p_\ell q_\ell} \E[(D_\ell D_k F)^4] \Big)^{1/2}\,.
\end{align}
This concludes the proof.
\end{proof}

\section{Application to the Erd\H{o}s-R\'enyi random graph\\ and proof of Theorems \ref{thm:ERGraph}, \ref{thm:Subgraphs} and \ref{thm:degrees}}\label{sec:RandomGraphs}

In this section we apply Theorem \ref{Folgetheorem} to counting statistics associated with the \textit{Erd\H{o}s-R\'enyi} random graph and establish thereby Theorem \ref{thm:ERGraph}, Theorem \ref{thm:Subgraphs} and Theorem \ref{thm:degrees}. First, we formally introduce the model and fix some notation. For $n\in\N$ and a real number $p\in(0,1)$, let $\mathcal{G}$ be the set of all simple and undirected graphs with vertex set $[n]:=\{1,\dots,n\}$. We consider the probability space $(\mathcal{G},\mathcal{P}(\mathcal{G}),\mathds{P})$, where $\mathcal{P}(\mathcal{G})$ is
the power set of $\mathcal{G}$ and $\mathds{P}$ is the probability measure given by
\[\mathds{P}(G)=p^{e(G)}(1-p)^{\binom{n}{2}-e(G)}\,,\] 
where for $G\in\mathcal{G}$, $e(G)$ denotes the number of edges of $G$. The identity map on $\mathcal{G}$ is called the Erd\H{o}s-R\'enyi random graph and is usually abbreviated by $G(n,p)$. We refer to the book \cite{JanLucRuc} for a detailed account of the theory of random graphs.

We are interested in the number $T$ of triangles in $G(n,p)$, that is the number of sub-graphs in $G(n,p)$, which are isomorphic to the complete graph on $3$ vertices. To analyse the asymptotic behaviour of this random variable, we typically allow $p$ to depend on $n$. Following the literature and to simplify the notation we will often suppress the dependence on $n$ of several (random) variables. In order to apply Theorem \ref{Folgetheorem} to the normalized triangle counting statistic $F:=(T-\E[T])/\sqrt{\Var[T]}$, we first have to embed the model into the framework of Section \ref{sec:Preliminaries} and Section \ref{sec:AbstractBerryEsseen}. If one labels the $\binom{n}{2}$ edges of the complete graph on $n$ vertices in a fixed but arbitrary way, $G(n,p)$ can be regarded as an outcome of $\binom{n}{2}$ independent Bernoulli trials, with success probability equal to $p$. Here, success in the $k$'th Bernoulli trial means that the $k$'th edge is visible in $G(n,p)$. Hence, $G(n,p)$ can be identified with the vector $\big(X_1,\dots,X_{\binom{n}{2}}\big)$ of independent Rademacher random variables with parameter $p_k\equiv p$, where $X_k=+1$ indicates that the edge with number $k$ is visible in $G(n,p)$. From now on, we fix an arbitrary enumeration of the edges in the complete graph on the vertex set $[n]$, write $I:=\{1,\dots,\binom{n}{2}\}$ for the set of labels and denote by $e_k$, $k\in I$, the $k$'th edge of the graph.

Recall from Lemma 3.5 in \cite{JanLucRuc} that 
\begin{equation}
 \label{eq:triangleVariance}
\Var[T]\asymp\begin{cases}\theta^5n^{4-5\alpha}&\text{if }0\leq\alpha\leq\frac 12\\\theta^3n^{3(1-\alpha)}&\text{if }\frac 12<\alpha<1\,,\end{cases}
\end{equation}
where we recall that the success probability is given by $p=\theta n^{-\alpha}$ with $\alpha\in[0,1)$ and $\theta\in(0,n^{\alpha})$ such that $\theta\asymp 1$.

\begin{proof}[Proof of Theorem \ref{thm:ERGraph}]
First, we notice that the assumptions of Theorem \ref{Folgetheorem} are satisfied since $F$ is normalized and only depends on finitely many Rademacher variables.

To evaluate the bound in Theorem \ref{Folgetheorem}, we have to control the random variables $D_kF$ and $D_kD_jF$ for $k,j\in\{1,\ldots,\binom{n}{2}\}$. We have
\begin{align*}
 D_kF&=\sqrt{pq}(F_k^+-F_k^-)=\frac{\sqrt{pq}}{\sqrt{\Var[T]}}(T_k^+-T_k^-)
\end{align*}
and hence
\begin{align*}
 \frac{\sqrt{\Var[T]}}{\sqrt{pq}}D_kF=T_k^+-T_k^-\,.
\end{align*}
Now, we notice that $T_k^+$ equals the number of triangles in the random graph $G(n,p)$ with $e_k$ visible, while $T_k^-$ equals the number of triangles in the random graph $G(n,p)$ when $e_k$ is not visible. Thus,
$T_k^+-T_k^-$ is the number of triangles that have edge $e_k$ in common, which shows that the random variable $T_k^+-T_k^-$
has a binomial distribution $\text{Bin}(n-2,p^2)$ with parameters $n-2$ and $p^2$. This is a consequence of the fact that there are $n-2$ possible triangles being attached to the $k$'th edge and each of these triangles is a sub-graph of $G(n,p)$ with probability $p^2$, independently of all other triangles. Hence,
\begin{equation}
 \label{eq:DkT}
\frac{\sqrt{\Var[T]}}{\sqrt{pq}}D_kF\sim\text{Bin}(n-2,p^2)\,.
\end{equation}
Next, we consider the second-order discrete gradient and obtain that
\begin{align*}
 D_kD_jF&=\frac{\sqrt{pq}}{\sqrt{\Var[T]}}D_k(T_j^+-T_j^-)\\
&=\frac{pq}{\sqrt{\Var[T]}}\big((T_j^+)_k^+-(T_j^+)_k^--\big((T_j^-)_k^+-(T_j^-)_k^-\big)\big)\,,
\end{align*}
whence
\begin{align*}
 \frac{\sqrt{\Var[T]}}{pq}D_kD_jF=(T_j^+)_k^+-(T_j^+)_k^--\big((T_j^-)_k^+-(T_j^-)_k^-\big)\,.
\end{align*}
The random variable $(T_j^+)_k^+-(T_j^+)_k^-$ counts the number of triangles in $G(n,p)$ adjacent to the $k$'th edge $e_k$, conditioned on the event that the $j$'th edge $e_j$ is visible in $G(n,p)$. Similarly, $(T_j^-)_k^+-(T_j^-)_k^-$ counts the number of triangles adjacent to $e_k$ when $e_j$ does not belong to $G(n,p)$. Therefore, $(T_j^+)_k^+-(T_j^+)_k^--\big((T_j^-)_k^+-(T_j^-)_k^-\big)$ is the number of triangles with common edges $e_k$ and $e_j$. The number of vertices shared by both edges $e_k$ and $e_j$ is $|e_k\cap e_j|$. Then, if $|e_k\cap e_j|\in\{0,2\}$, $(T_j^+)_k^+-(T_j^+)_k^--\big((T_j^-)_k^+-(T_j^-)_k^-\big)=0$ and if $|e_k\cap e_j|=1$, we have $e_k=\{r,s\}$ and $e_j=\{r,t\}$ for some $r,s,t\in[n]$, $s\neq t$. In this case, $(T_j^+)_k^+-(T_j^+)_k^--\big((T_j^-)_k^+-(T_j^-)_k^-\big)$  is either $1$ or $0$, depending on whether the edge $\{s,t\}$ belongs to $G(n,p)$ or not. Thus,
\begin{align}
\label{eq:DkDjT}
 \frac{\sqrt{\Var[T]}}{pq}D_kD_jF\begin{cases}\sim\text{Ber}(p)&\text{if }|e_k\cap e_j|=1\\=0&\text{if }|e_k\cap e_j|\in\{0,2\}\,,\end{cases}
\end{align}
where ${\rm Ber}(p)={\rm Bin}(1,p)$ indicates a Bernoulli distribution with parameter $p$.
Note that the random variables $D_\ell D_kF$ and $D_\ell D_jF$ are independent whenever $k\neq j$. Indeed, fix $\ell$ and let $k\neq j$, and suppose that $|e_k\cap e_\ell |\in\{0,2\}$ or $|e_j\cap e_\ell |\in\{0,2\}$. Then $D_\ell D_kF$ and $D_\ell D_jF$ are independent, since at least one of them is equal to zero. Now, consider the case that $|e_k\cap e_\ell |=1$ and $|e_j\cap e_\ell |=1$. In this situation, the three edges $e_k,\,e_j,\,e_\ell$ can have the following form. Either
\begin{equation}
\label{eq:case1}
e_k=\{s,t\}\,,\quad e_j=\{u,v\}\,,\quad e_\ell =\{t,u\}\,,\qquad \,s\neq u,\,v\neq t\,,
\end{equation}
or
\begin{equation}
\label {eq:case2}
e_k=\{s,t\}\,,\quad e_j=\{u,t\}\,,\quad e_\ell =\{v,t\}\,,\qquad s\neq v,\,u\neq v\,.
\end{equation}
In the situation of \eqref{eq:case1}, we have $\{s,u\}=e_a$ and $\{t,v\}=e_b$ for some $a,b\in I$, $a\neq b$ and thus
\[\frac{\sqrt{\Var[T]}}{pq}D_\ell D_kF=\1_{\{X_a=1\}}\quad\text{and}\quad\frac{\sqrt{\Var[T]}}{pq}D_\ell D_jF=\1_{\{X_b=1\}}\,,\]
which implies the independence of $D_\ell D_kF$ and $D_\ell D_jF$ in this case. In the situation of \eqref{eq:case2} we obtain $\{s,v\}=e_a$ and $\{u,v\}=e_b$ for some $a,b\in I$, $a\neq b$, and  hence
\[\frac{\sqrt{\Var[T]}}{pq}D_\ell D_kF=\1_{\{X_a=1\}}\quad\text{and}\quad\frac{\sqrt{\Var[T]}}{pq}D_\ell D_jF=\1_{\{X_b=1\}}\,,\]
which implies the independence of $D_\ell D_kF$ and $D_\ell D_jF$ in the second case.

In view of \eqref{eq:DkT} and the bound in Theorem \ref{Folgetheorem} we need an expression for the fractional moments of a binomial random variable $Z\sim\text{Bin}(n,p)$ with $n\in\N$ and $p\in(0,1)$. It is well known that
\begin{equation*}
\E[Z^\beta]\asymp\begin{cases}(np)^\beta&\text{if }np\to\infty\\np&\text{if }np\to 0\,,\end{cases}\qquad\qquad \beta\in[1,\infty)\,.
\end{equation*}
As a consequence, we deduce that for $n\in\{3,4,\ldots\}$, $\alpha\in[0,1)$ and $\theta\in(0,n^{\alpha})$ with $\theta\asymp 1$, the binomial random variable $Z\sim\text{Bin}(n-2,\theta^2n^{-2\alpha})$ satisfies
\begin{equation}
 \label{eq:momentBinom}
\E[Z^\beta]\asymp\begin{cases}\theta^\beta n^{\beta(1-2\alpha)}&\text{if }0\leq\alpha\leq\frac 12\\\theta\, n^{1-2\alpha}&\text{if }\frac 12<\alpha< 1\,,\end{cases}\qquad\qquad \beta\in[1,\infty)\,.
\end{equation}

With \eqref{eq:DkT}, \eqref{eq:DkDjT} and \eqref{eq:momentBinom} at hand we are now prepared for the evaluation of the bound in Theorem \ref{Folgetheorem}. The following terms have to be considered:
\begin{alignat*}{2}
 A_1&:= \Big(\sum_{j,k,\ell\in I}(\E[(D_jF)^2(D_kF)^2])^{1/2}(\E[(D_\ell D_jF)^2(D_\ell D_kF)^2])^{1/2} \Big)^{1\over 2}\,,\\
A_2&:=\Big(\sum_{j,k,\ell\in I}\frac{1}{pq} \E[(D_\ell D_jF)^2(D_\ell D_kF)^2] \Big)^{1\over 2}\,, && \hspace{-2cm} A_3:=\sum_{k\in I}\frac{1}{\sqrt{pq}} \E[ \vert D_kF \vert^3]\,,\\
A_4&:=(\E[\vert F \vert^r])^{1\over r} \sum_{k\in I}\frac{1}{\sqrt{pq}} (\E[|D_kF|^{2s}])^{1\over s} (\E[\vert D_kF \vert^{t}])^{1\over t}\,, && \hspace{-2cm} A_5:=\Big( \sum_{k\in I} \frac{1}{pq} \E[(D_kF)^4] \Big)^{1\over 2}\,,\\
A_6&:=\Big( \sum_{k,\ell\in I}\frac{1}{pq} (\E[(D_kF)^4])^{1/2} (\E[(D_\ell D_kF)^4])^{1/2} \Big)^{1\over 2}\,, && \hspace{-2cm}
A_7:=\frac{1}{p q}\Big(\sum_{k,\ell\in I}  \E[(D_\ell D_k F)^4)] \Big)^{1\over 2}\,,
\end{alignat*}
where in $A_4$, $r,s,t \in (1,\infty)$ are such that $\frac{1}{r}+\frac{1}{s}+\frac{1}{t}=1$. 
Let us begin with the term $A_1$. Using the independence of $D_\ell D_kF$ and $D_\ell D_jF$ for $k\neq j$ as well as the Cauchy-Schwarz inequality we obtain
\begin{align}
&\quad \sum_{j,k,\ell\in I}(\E[(D_jF)^2(D_kF)^2])^{1/2}(\E[(D_\ell D_jF)^2(D_\ell D_kF)^2])^{1/2}\notag\\
&=\sum_{j,\ell\in I}(\E[(D_jF)^4])^{1/2}(\E[(D_\ell D_jF)^4])^{1/2}\notag\\
&\qquad+\sum_{\substack{j,k,\ell\in I\\ k\neq j}}(\E[(D_jF)^2(D_kF)^2])^{1/2}(\E[(D_\ell D_jF)^2])^{1/2}(\E[(D_\ell D_kF)^2])^{1/2}\notag\\
&\leq\sum_{j,\ell\in I}(\E[(D_jF)^4])^{1/2}(\E[(D_\ell D_jF)^4])^{1/2}\notag\\
&\qquad+\sum_{\substack{j,k,\ell\in I\\ k\neq j}}(\E[(D_jF)^4])^{1/4}(\E[(D_kF)^4])^{1/4}(\E[(D_\ell D_jF)^2])^{1/2}(\E[(D_\ell D_kF)^2])^{1/2}.\label{eq:A1a}
\end{align}
We consider the two summands of the last estimate separately. Denote by $\mu_4$ the fourth moment of a $\text{Bin}(n-2,p^2)$-distributed random variable. Using \eqref{eq:DkT} and \eqref{eq:DkDjT}, we see that
\begin{align}
 &\quad\sum_{j,\ell\in I}(\E[(D_jF)^4])^{1/2}(\E[(D_\ell D_jF)^4])^{1/2}\notag\\
&=\frac{(pq)^3}{(\Var[T])^2}\sum_{j,\ell\in I}\Big(\E\Big[\Big(\frac{\sqrt{\Var[T]}}{\sqrt{pq}}D_jF\Big)^4\Big]\Big)^{1/2}\Big(\E\Big[\Big(\frac{\sqrt{\Var[T]}}{pq}D_\ell D_jF\Big)^4\Big]\Big)^{1/2}\notag\\
&=\frac{(pq)^3}{(\Var[T])^2}\sum_{j,\ell\in I}\mu_4^{1/2}p^{1/2}\1_{\{|e_j\cap e_\ell |=1\}}\notag\\
&=\frac{(pq)^3}{(\Var[T])^2}\mu_4^{1/2}p^{1/2}\binom{n}{2}2(n-2)\notag\\
&\asymp\frac{(pq)^3}{(\Var[T])^2}\mu_4^{1/2}p^{1/2}n^3\,.\label{eq:A1b}
\end{align}
For the second summand on the right hand side of \eqref{eq:A1a} we obtain 
\begin{align}
&\quad \sum_{\substack{j,k,\ell\in I\\ k\neq j}}(\E[(D_jF)^4])^{1/4}(\E[(D_kF)^4])^{1/4}(\E[(D_\ell D_jF)^2])^{1/2}(\E[(D_\ell D_kF)^2])^{1/2}\notag\\
&=\frac{(pq)^3}{(\Var[T])^2}\sum_{\substack{j,k,\ell\in I\\j\neq k}}\mu_4^{1/2}p\1_{\{|e_j\cap e_\ell |=1\}}\1_{\{|e_k\cap e_\ell |=1\}}\notag\\
&=\frac{(pq)^3}{(\Var[T])^2}\mu_4^{1/2}p\binom{n}{2}2(n-2)(2(n-2)-1)\notag\\
&\asymp\frac{(pq)^3}{(\Var[T])^2}\mu_4^{1/2}p\,n^4\notag\\
&=\frac{(pq)^3}{(\Var[T])^2}\mu_4^{1/2}p^{1/2}n^3\,p^{1/2}n\,.\label{eq:A1c}
\end{align}
Comparing \eqref{eq:A1b} with \eqref{eq:A1c} one can see that the second summand in \eqref{eq:A1a} determines the asymptotic behaviour of $A_1$, since $p^{1/2}n=\theta^{1/2}n^{1-\alpha/2}\to\infty$, as $n\to\infty$. By use of \eqref{eq:triangleVariance} and \eqref{eq:momentBinom} we obtain
\begin{equation}
\label{eq:A1d}
\frac{(pq)^3}{(\Var[T])^2}\mu_4^{1/2}p\,n^4\asymp\begin{cases}\theta^{-4}\, n^{-2+2\alpha}&\text{if }0\leq\alpha\leq\frac 12\\\theta^{-{3\over 2}}\, n^{-{3\over 2}+\alpha}&\text{if }\frac 12<\alpha< 1\,.\end{cases}
\end{equation}
Combining \eqref{eq:A1a}, \eqref{eq:A1b}, \eqref{eq:A1c} and \eqref{eq:A1d} yields that
\begin{align}
 A_1&= \Big(\sum_{j,k,\ell\in I}(\E[(D_jF)^2(D_kF)^2])^{1/2}(\E[(D_\ell D_jF)^2(D_\ell D_kF)^2])^{1/2} \Big)^{1/2}\notag\\
&=\begin{cases}\mathcal{O}\big(n^{-1\,+\,\alpha}\big)&\text{if }0\leq \alpha\leq\frac 12\\\mathcal{O}\big(n^{-{3\over 4}+{\alpha\over 2}}\big)&\text{if }\frac 12<\alpha< 1\,.\end{cases}\label{eq:A1}
\end{align}
With the same arguments as above and by using the additional information on the asymptotics of the third moment of a Bin($n-2,p^2$) random variable from \eqref{eq:momentBinom}, we obtain the following bounds for $A_2$, $A_3$, $A_5$, $A_6$ and $A_7$:
\begin{alignat}{4}
 A_2&=\begin{cases}\mathcal{O}\big(n^{-2+{5\alpha\over 2}}\big) &\text{if }0\leq\alpha\leq\frac 12\\\mathcal{O}\big(n^{-1+{\alpha\over 2}}\big)&\text{if }\frac 12<\alpha< 1\,,\end{cases}
 && \qquad A_3=\begin{cases}\mathcal{O}\big(n^{-1+{\alpha\over 2}}\big)&\text{if }0\leq\alpha\leq\frac 12\\\mathcal{O}\big(n^{-{3\over 2}+{3\alpha\over 2}}\big)&\text{if }\frac 12<\alpha< 1\,,\end{cases}\label{eq:A3}\\
 A_5&=\begin{cases}\mathcal{O}\big(n^{-1+{\alpha\over 2}}\big)&\!\text{if }0\leq\alpha\leq\frac 12\\\mathcal{O}\big(n^{-{3\over 2}+{3\alpha\over 2}}\big)&\!\text{if }\frac 12<\alpha< 1\,,\end{cases}
 &&\qquad  A_6=\begin{cases}\mathcal{O}\big(n^{-{3\over 2}+{7\alpha\over 4}}\big)&\text{if }0\leq\alpha\leq\frac 12\\\mathcal{O}\big(n^{-{5\over 4}+{5\alpha\over 4}}\big)&\text{if }\frac 12<\alpha< 1\,,\end{cases}\label{eq:A6}\\
A_7&=\begin{cases}\mathcal{O}\big(n^{-{5\over 2}+{\frac{7\alpha}{2}}}\big)&\text{if }0\leq\alpha\leq\frac 12\\\mathcal{O}\big(n^{-{3\over 2}+\frac{3\alpha}{2}}\big)&\text{if }\frac 12<\alpha< 1\,.\end{cases}\label{eq:A7}
\end{alignat}

To describe the asymptotic behaviour of
\[A_4=(\E[\vert F \vert^r])^{1/r} \sum_{k\in I}\frac{1}{\sqrt{pq}} (\E[|D_kF|^{2s}])^{1/s} (\E[\vert D_kF \vert^{t}])^{1/t}\]
with $r,s,t \in (1,\infty)$ and $\frac{1}{r}+\frac{1}{s}+\frac{1}{t}=1$, we use the following moment asymptotics, which is taken from the proof of \cite[Theorem 2]{Ruc}. As $n\to\infty$, it holds that

\begin{equation}
\label{eq:rmoments}
\E[F^k]\asymp\begin{cases}
0&\text{if }k\in\N\text{ is odd}\\
\frac{k!}{(k/2)!2^{k/2}}&\text{if }k\in\N\text{ is even}\,.
\end{cases}
\end{equation}

We will choose $r$ in such a way that $A_4$ converges to zero at least as fast as all the other terms $A_1,\ldots,A_7$ that have already been computed. So, fix an even integer $r>2$ and choose $s,t\in(1,\infty)$ such that $\frac{1}{r}+\frac{1}{s}+\frac{1}{t}=1$. For $\beta\in[1,\infty)$ let $\mu_\beta$ be the moment of order $\beta$ of a Bin$(n-2,p^2)$ random variable. Using \eqref{eq:DkT}, we obtain
\begin{align}
&\quad \frac{1}{\sqrt{pq}}(\E[|D_kF|^{2s}])^{1/s} (\E[\vert D_kF \vert^{t}])^{1/t}\notag\\
&=\frac{1}{\sqrt{pq}}\frac{(pq)^{3/2}}{\big(\Var[T]\big)^{3/2}}\Big(\E\Big[\Big(\frac{\sqrt{\Var[T]}}{\sqrt{pq}}D_kF\Big)^{2s}\Big]\Big)^{1/s} \Big(\E\Big[\Big(\frac{\sqrt{\Var[T]}}{\sqrt{pq}}D_kF\Big)^{t}\Big]\Big)^{1/t}\notag\\
&=\frac{pq}{\big(\Var[T]\big)^{3/2}}\mu_{2s}^{1/s}\mu_{t}^{1/t}\,.\label{eq:A4a}
\end{align}
Note that the absolute values are omitted since $D_kF$ is non-negative. Resorting to \eqref{eq:triangleVariance} and \eqref{eq:momentBinom} and using that $\frac 1s+\frac 1t=1-\frac 1r$, we get
\begin{equation}
\label{eq:A4b}
 \frac{pq}{\big(\Var[T]\big)^{3/2}}\mu_{2s}^{1/s}\mu_{t}^{1/t}\asymp\begin{cases}\theta^{-{7\over 2}}\,n^{-3+{\alpha\over 2}}&\text{if }0\leq \alpha\leq\frac 12\\\theta^{-{5\over 2}-{1\over r}}\,n^{-\frac 72+{3\alpha\over 2}+{2\alpha\over r}-{1\over r}}&\text{if }\frac 12<\alpha< 1\,.\end{cases}
\end{equation}
Combining \eqref{eq:rmoments}, \eqref{eq:A4a} and \eqref{eq:A4b}, we obtain that for all even integers $r>2$,
\begin{align}
 A_4=\begin{cases}\mathcal{O}\big(n^{-1+{1\over 2}\alpha}\big)&\text{if }0\leq \alpha\leq\frac 12\\ \mathcal{O}\big(n^{-\frac 32+{3\alpha\over 2}+{2\alpha\over r}-{1\over r}}\big)&\text{if }\frac 12<\alpha< 1\,.\end{cases}\label{eq:A4c}
\end{align}
If $0\leq\alpha\leq\frac 12$, the bound in \eqref{eq:A4c} does not depend on $r$ and is of lower order compared to the bounds in \eqref{eq:A1}--\eqref{eq:A7}.
In the case $\frac 12<\alpha<\frac 23$ the term $A_1$ in \eqref{eq:A1} determines the leading-order asymptotics among the bounds in \eqref{eq:A1}--\eqref{eq:A7} if $r>2$ is chosen in such a way that
\[-\frac 32+\frac 32\alpha+\frac 2r\alpha-\frac 1r\leq -\frac 54+\frac 54\alpha\,,\quad\text{or, equivalently}\,,\quad r\geq\frac{4(2\alpha-1)}{1-\alpha}\,.\]
Namely, we put $r$ as the smallest even integer larger or equal to $\max\big\{2,\frac{4(2\alpha-1)}{1-\alpha}\big\}$ and conclude from \eqref{eq:A4c} that
\begin{align}
A_4=\begin{cases}\mathcal{O}\big(n^{-1+{\alpha\over 2}}\big)&\text{if }0\leq\alpha\leq\frac 12\\ \mathcal{O}\big(n^{-{5\over 4}+{5\alpha\over 4}}\big)&\text{if }\frac 12<\alpha< 1\,.\end{cases}\label{eq:A4}
\end{align}
Moreover, if $\frac 23\leq\alpha<1$, the Kolmogorov distance is dominated by the term $A_6$. 
This concludes the proof.
\end{proof}

After having established Theorem \ref{thm:ERGraph} we turn to the proof of Theorem \ref{thm:Subgraphs}. Recall that in this situation $p\in(0,1)$ is fixed and that $\Gamma$ is a graph with at least one edge. Furthermore, $S$ is the number of copies of $\Gamma$ in $G(n,p)$ and $F:=(S-\E[S])/\sqrt{\Var[S]}$ denotes the normalized sub-graph counting statistic. Let us recall from \cite[Lemma 3.5]{JanLucRuc} that
\begin{equation}\label{eq:VarSubgraphs}
\Var[S] \asymp c(p,\Gamma)\,n^{2v-2}\,,
\end{equation}
where $c(p,\Gamma)\in(0,\infty)$ is a constant only depending on $p$ and $\Gamma$, and where $v=v(\Gamma)$ stands for the number of vertices of $\Gamma$. Finally, we recall that $I$ stands for the set $\{1,\ldots,{n\choose 2}\}$ and put $q:=1-p$.

\begin{proof}[Proof of Theorem \ref{thm:Subgraphs}.]
First, we assume that $n\geq v\geq 4$. Note that for $k\in I$, $S_k^{+}$ and $S_k^{-}$ are the number of copies of $\Gamma$ if edge $e_k$ is present in $G(n,p)$ or not, respectively. Thus, $S_k^{+}-S_k^{-}$ is the number of copies of $\Gamma$ in $G(n,p)$ sharing edge $e_k$. Since there are ${n-2\choose v-2}$ choices for the remaining vertices needed to build such a copy, we have that 
$$D_k F = {\sqrt{pq}\over\sqrt{\Var[S]}}\big(S_k^+-S_k^-\big) = \mathcal{O}(n^{-1})\,,\qquad k\in I\,,$$
where we also used \eqref{eq:VarSubgraphs}. Next, we consider the second-order discrete gradient
$$D_\ell D_k F = {pq\over\sqrt{\Var[S]}}\big((S_k^+)_\ell^+-(S_k^+)_\ell^--(S_k^-)_\ell^++(S_k^-)_\ell^-\big)\,,\qquad k,\ell\in I\,.$$
If $|e_k\cap e_\ell|=0$, $v-4$ further vertices are needed to build a copy of $\Gamma$ containing the edges $e_k$ and $e_\ell$. Since there are ${n-4\choose v-4}$ choices for these vertices and because of \eqref{eq:VarSubgraphs}, one has that
\begin{equation}\label{eq:Sub1}
D_\ell D_k F = \mathcal{O}(n^{-3})\,.
\end{equation}
Similarly, if $|e_k\cap e_\ell|=1$ we find that
\begin{equation}\label{eq:Sub2}
D_\ell D_k F = \mathcal{O}(n^{-2})
\end{equation}
and if $|e_k\cap e_\ell|=2$ we have $k=\ell$ and hence
\begin{equation}\label{eq:Sub3}
D_\ell D_kF = 0\,.
\end{equation}
We can now evaluate the terms arising in Theorem \ref{Folgetheorem}, which we denote by $A_1,\ldots,A_7$. For $A_1$ we have that
$$A_1^2:={15\over 4}\sum_{j,k,\ell\in I}(\E[(D_jF)^2(D_kF)^2])^{1/2}(\E[(D_\ell D_jF)^2(D_\ell D_kF)^2])^{1/2} \,.$$ Using the Cauchy-Schwarz inequality, we see that
\begin{align*}
A_1^2 \leq {15\over 4}\sum_{\ell\in I}\Big(\sum_{k\in I}(\E[(D_kF)^4])^{1/4}(\E[(D_\ell D_kF)^4])^{1/4}\Big)^2
\end{align*}
and a distinction of the cases $|e_k\cap e_\ell|=0$, $|e_k\cap e_\ell|=1$ and $|e_k\cap e_\ell|=2$ yields
$$A_1=\mathcal{O}(n^{-1})$$
by \eqref{eq:Sub1}, \eqref{eq:Sub2} and \eqref{eq:Sub3}. Similar considerations with $r=2$ and $s=t=4$ lead to
\begin{alignat*}{3}
& A_2 = \mathcal{O}(n^{-2})\,,\qquad && A_3 = \mathcal{O}(n^{-1})\,,\qquad && A_4 = \mathcal{O}(n^{-1})\,,\\
& A_5 = \mathcal{O}(n^{-1})\,,\qquad && A_6 = \mathcal{O}(n^{-3/2})\,,\qquad && A_7 = \mathcal{O}(n^{-5/2})
\end{alignat*}
and hence to $d_K(F,N)=\mathcal{O}(n^{-1})$. 

The case that $\Gamma$ has exactly two vertices is covered by the classical Berry-Esseen theorem for a binomial distribution with parameters $n\choose 2$ and $p$. If $\Gamma$ has exactly three vertices, then $\Gamma$ is either the complete graph on $3$ vertices (as already covered by Theorem \ref{thm:ERGraph}) or a graph with $1$ or $2$ edges on $3$ vertices, respectively. In these cases, instead of \eqref{eq:Sub1} one has that $D_\ell D_k F=0$ if $|e_k\cap e_\ell|=0$ and one obtains that $d_K(F,N)=\mathcal{O}(n^{-1})$. This completes the proof.
\end{proof}

Finally in this section, we turn to the proof of Theorem \ref{thm:degrees} for which we use the same set-up as in the proof of Theorem \ref{thm:ERGraph}. In particular, we denote by $I$ the set $\{1,\ldots,{n\choose 2}\}$ and recall that $p=\theta n^{-\alpha}$ with suitable $\alpha\in\R$ and $\theta\in(0,n^{\alpha})$ such that $\theta\asymp 1$. We also put $q:=1-p$. For $d\in\{0,1,2,\ldots\}$ we denote by $V_{n,d}$ the number of vertices of $G(n,p)$ with degree $d$ and put $G_{n,d}:=(V_{n,d}-\E[V_{n,d}])/\sqrt{\Var[V_{n,d}]}$. Let us recall from Chapter 6.3 in \cite{JanLucRuc} that 
\begin{equation}\label{eq:Var0}
\Var[V_{n,0}]\asymp 2\theta n^2 p = 2\theta n^{2-\alpha}\,,\qquad \alpha\in[1,2)\,,
\end{equation}
and for $d\in\N$,
\begin{equation}\label{eq:Var1}
\Var[V_{n,d}] \asymp c(d,\theta)\,n^{d+1}p^d = c(d,\theta)\,n^{d+1-\alpha d}\,,\qquad \alpha\in[1,1+1/d)\,,
\end{equation}
with a constant $c(d,\theta)\in(0,\infty)$ only depending on $d$ and on $\theta$. From Theorem 8 in \cite{BarKarRuc} it is known that a central limit theorem for $G_{n,0}$ holds if and only if $n^2p\to\infty$ and $np-\log n\to-\infty$, as $n\to\infty$. In our case that $p=\theta n^{-\alpha}$ this is equivalent to $\alpha\in[1,2)$. Moreover, \cite[Theorem 6.36]{JanLucRuc} says that for $d\in\N$, $G_{n,d}$ satisfies a central limit theorem if and only if $n^{d+1}p^d\to\infty$ and $np-\log n-d\log\log n\to-\infty$, as $n\to\infty$. Again, in our case this is equivalent to $\alpha\in[1,1+1/d)$, whence the conditions on $\alpha$ in \eqref{eq:Var0} and \eqref{eq:Var1}.

\begin{proof}[Proof of Theorem \ref{thm:degrees}]
At first, we notice that adding or removing an edge from $G(n,p)$ can change the number of vertices with degree equal to $d\in\{0,1,2,\ldots\}$ by at most $2$. This implies that
\begin{align}\label{eq:VertexDkBound1}
|D_kG_{n,d}|\leq {2\sqrt{pq}\over\sqrt{\Var[V_{n,d}]}}\,,\qquad k\in I\,.
\end{align}
Next, we observe that $(pq)^{-1}|D_kD_\ell  V_{n,d}|\in\{0,1,2\}$ for all $k,\ell\in I$. We also have that $D_kD_\ell  V_{n,d}$ and hence $D_kD_\ell  G_{n,d}$ is zero whenever the two corresponding edges $e_k$ and $e_\ell $ are identical or do not share a common vertex. Resorting to the definition of the random variable $G_{n,d}$, we thus conclude that
\begin{align}\label{eq:VertexDkBound2}
|D_kD_\ell  G_{n,d}|\leq {2pq\over\sqrt{\Var[V_{n,d}]}}\1_{\{|e_k\cap e_\ell |=1\}}\,,\qquad k,\ell\in I\,.
\end{align}

We can now evaluate the bound in Theorem \ref{Folgetheorem}. We start with the case $d=0$. Since the computations are almost identical for each of the terms there, we restrict to the first term $A_1$, which is given by
$$A_1:=\Big({15\over 4}\sum_{j,k,\ell\in I}(\E[(D_jG_n)^2(D_kG_n)^2])^{1/2}(\E[(D_\ell D_jG_n)^2(D_\ell D_kG_n)^2])^{1/2} \Big)^{1\over 2}\,.$$
Using \eqref{eq:VertexDkBound1} and \eqref{eq:VertexDkBound2}, we see that
\begin{align}\label{eq:A1Vertex}
A_1^2 \leq {60(pq)^3\over(\Var[V_{n,d}])^2}\sum_{j,k,\ell\in I}\1_{\{|e_j\cap e_\ell|=1\}}\1_{\{|e_k\cap e_\ell|=1\}} =  {60(pq)^3\over(\Var[V_{n,d}])^2}{n\choose 2}(2(n-2))^2\,.
\end{align}
Now, we use that $p=\theta n^{-\alpha}$ as well as the variance asymptotics at \eqref{eq:Var0}. This allows us to conclude that $A_1=\mathcal{O}(n^{-\alpha/2})$. Denoting the other terms arising in Theorem \ref{Folgetheorem} by $A_2,\ldots,A_7$, we conclude by similar computations and by taking $r=2$, $s=t=4$ that
\begin{alignat*}{3}
& A_2 =\mathcal{O}(n^{-\alpha/2})\,,\qquad && A_3 = \mathcal{O}(n^{-1+\alpha/2})\,,\qquad && A_4 = \mathcal{O}(n^{-1+\alpha/2})\,,\\
& A_5 =\mathcal{O}(n^{-1+\alpha/2})\,,\qquad && A_6= \mathcal{O}(n^{-1/2})\,,\qquad && A_7= \mathcal{O}(n^{-1/2})\,.
\end{alignat*}
Thus, $d_K(G_{n,0},N)=\mathcal{O}(n^{-1+\alpha/2})$. Turning to the case $d\in\N$ we start again with the term $A_1$ and obtain by using \eqref{eq:A1Vertex} and \eqref{eq:Var1} that $A_1=\mathcal{O}(n^{1-d-3\alpha/2+\alpha d})$. Moreover, one sees for the other terms $A_2,\ldots,A_7$ in Theorem \ref{Folgetheorem} that
\begin{alignat*}{3}
& A_2 =\mathcal{O}(n^{1-d-3\alpha/2+\alpha d})\,,\qquad && A_3 = \mathcal{O}(n^{1/2-3d/2-\alpha+3\alpha d/2})\,,\qquad && A_4 = \mathcal{O}(n^{1/2-3d/2-\alpha+3\alpha d/2})\,,\\
& A_5 =\mathcal{O}(n^{-d-\alpha/2+\alpha d})\,,\qquad && A_6= \mathcal{O}(n^{1/2-d-\alpha+\alpha d})\,,\qquad && A_7= \mathcal{O}(n^{1/2-d-\alpha+\alpha d})\,.
\end{alignat*}
Thus, for $d\in\N$, $d_K(G_{n,d},N)=\mathcal{O}(n^{1/2-3d/2-\alpha+3\alpha d/2})$, provided that $\alpha\in[1,(3d-1)/(3d-2))$. This completes the proof.
\end{proof}

\section{Application to percolation on trees and proof of Theorem \ref{thm:PercolationTree}}\label{sec:Percolation}

Let us recall some notation and embed the objects into the framework of Sections \ref{sec:Preliminaries} and \ref{sec:AbstractBerryEsseen}. We denote by $\sT$ an infinite rooted tree such that each vertex has degree bounded by $D+1$ with $D\in\N$. By $\sT_n$, $n\in\N$, we indicate the finite sub-tree of $\sT$ consisting of all vertices with graph-distance at most $n$ from the root. We now embed $\sT$ into the Euclidean plane by the following procedure, which is illustrated in Figure \ref{fig:tree2}. The root is mapped to the point with coordinates $(1,1)$ and the vertices adjacent to it are mapped to the points with coordinates $(1,2),\ldots,(N(1),2)$ with $N(1)\leq D$ in an arbitrary order. Next, the vertices adjacent to these are mapped onto $(1,3),\ldots,(N(2),3)$, where (from left to right) the first points in this list are adjacent to $(1,2)$, the next points to $(2,2)$, etc. Continuing this way, the vertices with graph-distance $n$ to the root are mapped onto $(1,n+1),\ldots,(N(n),n+1)$ with $N(n)\leq N(n-1)D$ and the infinite tree $\sT$ is embedded into the upper right quadrant of the Euclidean plane. A vertex of the embedded tree with coordinates $(i,k)$ for $k\in\N$ and $i\in\{1,\ldots,N(k)\}$ receives the label $1+N(1)+\ldots+N(k-1)+i$ and an edge of the embedded tree whose adjacent vertices have coordinates $(i,k)$ and $(j,k-1)$ for $k\in\{2,3,\ldots\}$, $i\in\{1,\ldots,N(k)\}$ and $j\in\{1,\ldots,N(k-1)\}$ finally receives the label of its endpoint minus one, i.e. $N(1)+\ldots+N(k-1)+i$, see Figure \ref{fig:tree2}. This numbering of vertices also corresponds to that in Figure \ref{fig:tree}.

\begin{figure}[t]
\includegraphics[width=0.5\columnwidth]{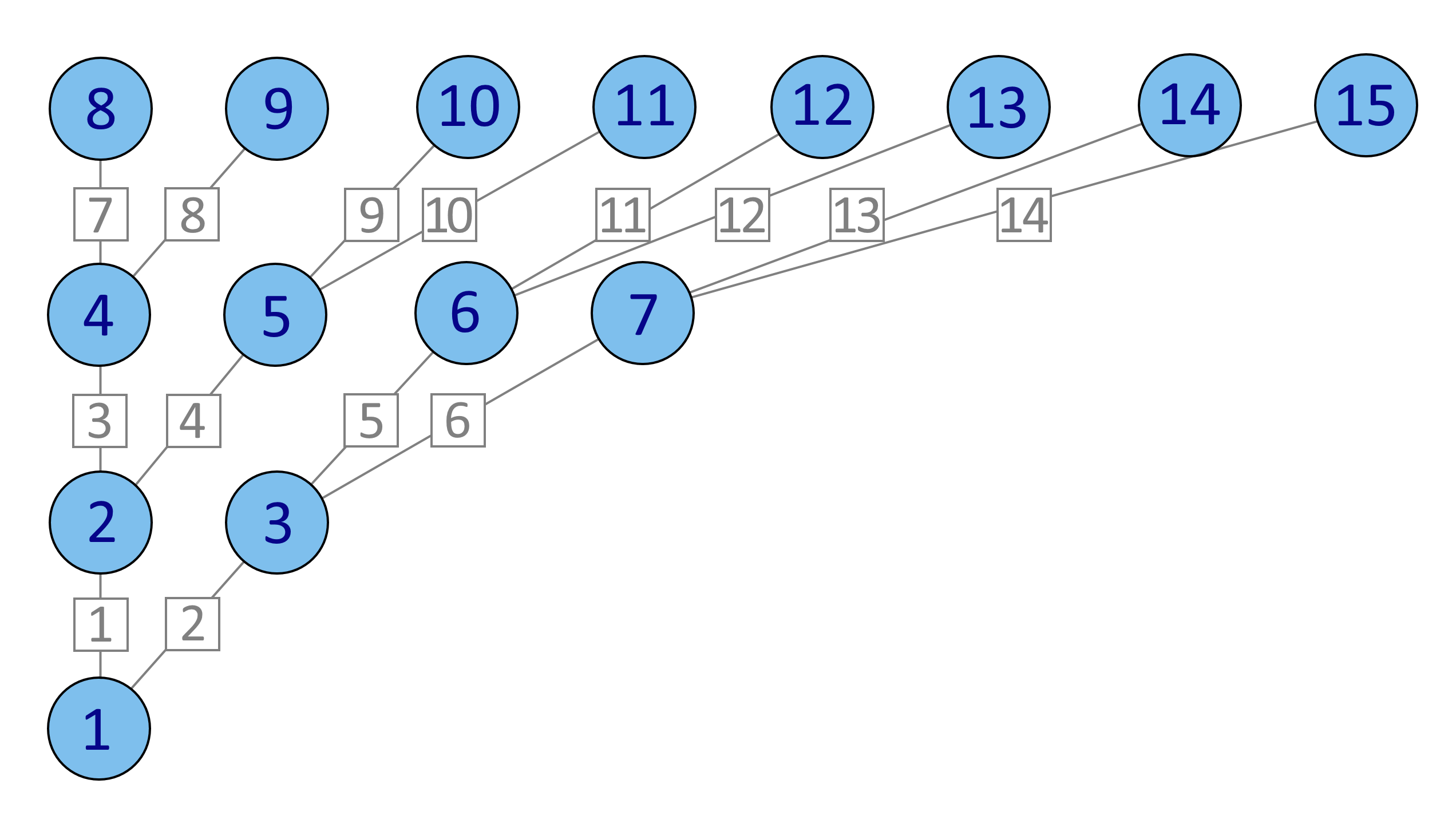}
\caption{Embedding in the plane of $\sT_3$ of a $2$-regular tree $\sT$.}
\label{fig:tree2}
\end{figure}

This construction puts us in the position to interpret our percolation problem on $\sT$ in terms of the framework of Theorem \ref{AbstractBound}. Namely, for fixed $p\in(0,1)$ let $(X_k)_{k\in\N}$ be a sequence of independent Rademacher random variables with $P(X_k=+1)=p$ and $P(X_k=-1)=1-p$. For each $k\in\N$, assign the random variable $X_k$ to the uniquely determined edge $e_k$ of the embedded tree with label $k$. The random graph $\sT(p)$ consists of all edges $e_k$ of the embedded tree with label $X_k=+1$ together with their two adjacent vertices. Thus, $\sT(p)$ is described by the Rademacher sequence $(X_k)_{k\in\N}$ and its restriction $\sT_n(p)$ to $\sT_n$ is described by a finite sub-sequence of $(X_k)_{k\in\N}$.

For $n\in\N$, we denote by $C_n(p)$ the number of connected components of the random graph $\sT_n(p)$, where, as already discussed in the introduction, by a connected component we understand a maximal connected sub-graph with at least one edge. By $H_n(p):=(C_n(p)-\E[C_n(p)])/\sqrt{\Var[C_n(p)]}$ we denote the normalized version of $C_n(p)$ and notice that $C_n(p)$ is a Rademacher functional.
 
\begin{proof}[Proof of Theorem \ref{thm:PercolationTree}]
We start by investigating the first- and second-order discrete gradient applied to $H_n(p)$. By definition, we have that
$$D_kH_n(p)={\sqrt{pq}\over\sqrt{\Var[C_n(p)]}}D_kC_n(p)={\sqrt{pq}\over\sqrt{\Var[C_n(p)]}}\big((C_n(p))_k^+-(C_n(p))_k^-\big)\,,$$
where $k\in\{1,\ldots,1+N(1)+\ldots+N(n)\}$.  Note that $D_kC_n(p)$ is a local quantity since it depends only on the edges adjacent to $k$. Adding or removing the edge with label $k$ can change the number of connected components by at most $1$. Therefore, we have that
\begin{equation}
\label{eq:est1}
|D_kH_n(p)|\leq {\sqrt{pq}\over\sqrt{\Var[C_n(p)]}}
\end{equation}
for all $k\in\{1,\ldots,1+N(1)+\ldots+N(n)\}$. Next, we consider for $k,j\in\{1,\ldots,1+N(1)+\ldots+N(n)\}$ the second-order discrete gradient
$$D_kD_jH_n(p) = {pq\over\sqrt{\Var[C_n(p)]}}\big(((C_n(p))_j^+)_k^+-((C_n(p))_j^+)_k^--\big(((C_n(p))_j^-)_k^++((C_n(p))_j^-)_k^-\big)\big)\,.$$
For most choices of $j$ and $k$, $D_kD_jH_n(p)$ is zero. A non-zero contribution only arises if the edges $e_j$ and $e_k$ with labels $j$ and $k$, respectively, share precisely one common vertex. We indicate this situation by $|e_j\cap e_k|=1$ and write $|e_j\cap e_k|\in\{0,2\}$ otherwise.  Thus, we can use the triangle inequality and the estimate \eqref{eq:est1} to conclude that
\begin{align}
\label{eq:est2}
|D_jD_kH_n(p)|\;\begin{cases} = 0 &\text{if }|e_j\cap e_k|\in\{0,2\}\\ \leq {2\,pq\over\sqrt{\Var[C_n(p)]}} &\text{if }|e_j\cap e_k|=1\,.\end{cases}
\end{align}

We use a lower bound for the variance of $C_n(p)$, which can be found in \cite[Identity (2.3)]{SugimineTakei} in case of a $D$-regular tree, but the proof is easily seen to carry over to our situation. More precisely, there exists a constant $c(p)>0$ only depending on $p$ such that 
\begin{equation}
\label{eq:vlb}
\Var[C_n(p)]\geq c(p)\,|\sT_n|\,.
\end{equation}
Estimating the terms in Theorem \ref{Folgetheorem} with $r=2$ and $s=t=4$ there by means of \eqref{eq:est1}--\eqref{eq:vlb} yields (after a straight forward computation similar to the one in the proof of Theorem \ref{thm:degrees}) that
\[d_K\left(H_n(p),N\right)=\mathcal{O}(|\sT_n|^{-1/2})\,.\]
In case of a $D$-regular tree, we have that $|\sT_n|=D+\ldots+D^n=(D^{n+1}-1)/(D-1)-1$, if $D\geq 2$, and $|\sT_n|=n$, if $D=1$. Thus, $|\sT_n|$ behaves like $D^n$, if $D\geq 2$, and like $n$, if $D=1$, as $n\to\infty$. This completes the proof.
\end{proof}

\subsection*{Acknowledgement}
We are greatly indebted to the referee for a careful reading and for the many helpful hints and suggestions. We would also like to thank Peter Eichelsbacher for insightful discussions on the alternative approaches to Theorem \ref{thm:Subgraphs} and for the remarks by Larry Goldstein he communicated to us.

The authors were supported by the German Research Foundation (DFG) via SFB-TR 12.

\bibliography{RademacherRevision-1}

\end{document}